\begin{document}

\newtheorem{prop}{Proposition}[section]

\newtheorem{thm}[prop]{Theorem}
\newtheorem{lemma}[prop]{Lemma}
\newtheorem{cor}[prop]{Corollary}
\newtheorem{dfn}[prop]{Definition}
\newtheorem{pro}[prop]{Proposition}
\theoremstyle{definition}
\newtheorem{Question}[prop]{Question}
\newtheorem{Example}[prop]{Example}
\newtheorem{Examples}[prop]{Examples}
\newtheorem{Remark}[prop]{Remark}
\newcommand{\ann}{\mbox{ann}}
\newcommand{\depth}{\mbox{depth}}
\newcommand{\br}{{\bf r}}
\newcommand{\md}{\operatorname{mod}}
\newcommand{\finmd}{\operatorname{fin-mod}}
\newcommand{\add}{\operatorname{add}}
\newcommand{\End}{\operatorname{End}}
\newcommand{\Hom}{\operatorname{Hom}}
\renewcommand{\Im}{\mbox{Im}}
\newcommand{\pd}{\mbox{pd}}
\newcommand{\Ker}{\mbox{Ker}}
\newcommand{\Coker}{\mbox{Coker}}
\newcommand{\coh}{\operatorname{coh}}
\newcommand{\soc}{\mbox{soc}}
\newcommand{\Tor}{\mbox{Tor}}
\newcommand{\Spec}{\mbox{Spec}}
\newcommand{\tp}{\mbox{top}}
\renewcommand{\dim}{\mbox{dim}}
\newcommand{\rad}{\mbox{rad}}
\newcommand{\gldim}{\mbox{gl.dim}}
\newcommand{\nilmd}{\operatorname{nilmod}}
\newcommand{\Ext}{\operatorname{Ext}\nolimits}
\newcommand{\op}{^{\mbox{op}}}
\newcommand{\pr}{^{\prime}}
\newcommand{\f}{\operatorname{fin}}
\newcommand{\Supp}{\operatorname{Supp}}
\newcommand{\semi}{\mathbin{\vcenter{\hbox{$\scriptscriptstyle|$}}
\;\!\!\!\times }}
\newcommand{\cx}{\operatorname{cx}\nolimits}
\renewcommand{\depth}{\operatorname{depth}\nolimits}
\newcommand{\dm}{\operatorname{dim}\nolimits}
\newcommand{\rk}{\operatorname{rk}\nolimits}
\newcommand{\mto}{\hookrightarrow} 
\newcommand{\da}{\downarrow} 
\newcommand{\str}{\stackrel}
\newcommand{\ra}{\rightarrow}
 \newcommand{\lra}{\longrightarrow}
\newcommand{\AR} {Auslander-Reiten }
\newcommand{\Tr}{\operatorname{Tr}\nolimits}
\newcommand{\cF}{\mathcal F}
\newcommand{\rank}{\operatorname{rank}\nolimits}

\title[Thick subcategories of the stable category]
{Thick subcategories of the stable category of modules over the exterior algebra.}

\author{Otto Kerner}\address{Mathematisches Institut
\\Heinrich-Heine-Universit\"at \\ 40225 D\"usseldorf\\ 
Germany} \email{kerner@math.uni-duesseldorf.de}\author{Dan Zacharia}
\address{Department of Mathematics\\Syracuse University\\Syracuse
13244\\USA} \email{zacharia@syr.edu}\thanks{This project was started
 while both authors were visiting the University of 
Bielefeld as part of the SFB's ``Topologische und spektrale 
Strukturen in der Darstellungstheorie" program. Most of the results in this paper were obtained in Bielefeld, and also during the authors' visit to the University of Kiel. We both thank Henning Krause and  
the SFB, and Rolf Farnsteiner for inviting us and for making our stays possible. } 
\subjclass[2010]{Primary 16G70. Secondary 16G60, 16E05}

\dedicatory{Dedicated to Jos\'e Antonio de la Pe\~{n}a  for his 60th birthday}

\begin{abstract} 
We study thick subcategories defined by modules of complexity one in $\underline{\md}R$, where
$R$ is the exterior algebra in $n+1$ indeterminates.
\end{abstract}
\maketitle


Let $R$ be the exterior algebra in $n+1$ indeterminates, and let $\underline\md R$ denote the stable 
category of all the finitely generated graded $R$-modules. This partially semi-expository article is part of a project devoted to 
the study of the thick subcategories of $\underline\md R$. Note that using the Bernstein-Gelfand-Gelfand correspondence (\cite{BGG}), 
this is equivalent to studying the thick subcategories of $\mathcal D^b(\coh {\bf P}^n)$-the bounded derived category of coherent sheaves 
on the projective $n$-space. This lattice of thick subcategories has been studied before, see for instance the treatment in the $n=1$ in \cite{Kr} using non-crossing partitions (\cite{IT}), and also in \cite{GKR}. More generally the lattice of thick subcategories of the derived category of a noetherian ring has also been treated extensively in \cite{BIK, DS, Ho, N, T} to name just a few of the references. While we make use of some of the results in the literature mentioned above, our approach to this problem is different, in the sense that it relies on representation theoretic concepts and methods (Auslander-Reiten theory for instance). 

\medskip
\noindent We concentrate in this paper on the study of the thick subcategories of $\underline\md R$ generated 
by the graded modules of complexity one, which, while having a very nice description, turn out to be quite complicated. Eisenbud 
has proved in \cite{E}, that the complexity one graded modules are periodic up to a shift, and so, using the BGG correspondence (see also \cite{OSS}), they correspond to the bounded complexes of coherent sheaves of finite length. It is also easy to see that these modules 
are weakly Koszul in the sense of \cite{MVZ1}, see also \cite{HI}. 

\medskip
\noindent The article is organized as follows: In the first section 
we review the notation and the background needed. In particular for all non zero linear forms $\xi$ we consider the local linear 
modules $M_{\xi}=R/{\langle\xi\rangle}$ of complexity one. In the second section, we use \cite{E} to show that these are the building blocks of the modules 
of complexity one (Theorem \ref{modules of cx 1}), and we study the subcategories $\mathcal F(M_{\xi})$ consisting of those graded 
$R$-modules having a filtration with factors equal to $M_{\xi}$. We prove in Theorem \ref{nilmod} that these subcategories  are 
wild for $n>2$ (tame in the case when $n=2$), and they are all equivalent to the category of finite dimensional modules over the (commutative) power series ring $\Bbbk[[t_1,\cdots,t_n]]$ in $t_1,\ldots,t_n$. We also show that every minimal ideal closed thick category of $\underline\md R$  
containing a module of complexity one is generated by a module of the form $M_{\xi}$ as above.
In the third section, we give some criteria for deciding when modules of non maximal complexity have self-extensions. 
Finally, in the appendix we describe the thick subcategories in the projective line case.

\section{Introductory results and background}\label{intro}

Let $V$ be an $n+1$ dimensional vector space over an algebraically closed field $\Bbbk$ and 
let $R =R(V)= \bigwedge V$ be the corresponding exterior  algebra.  If $\{x_0,\ldots ,x_n\}$ is a basis 
of $V$, we also write $R= R(x_0,\ldots ,x_n)$ and
call $R$ the exterior algebra in $x_0,\ldots ,x_n$. It is well-known that $R$ 
is a graded $\Bbbk$-algebra by assigning each indeterminate $x_i$ degree one, and that as a $\Bbbk$-algebra, 
$R$ is generated by its degree zero and degree one (also called the non zero linear forms) parts.
 Let $\md R$ be the abelian category of finitely generated {\em graded} $R$-modules. 
For two graded $R$-modules $M$ and $N$, $\Hom_R(M,N)$ will {\it always} denote the space of degree 
zero homomorphisms from $M$ to $N$. Also, for two graded $R$-modules $M$ and $N$, $\Ext_R^i(M,N)$ 
will {\em always} denote the derived functors of the graded Hom between the two modules. If $M = \oplus_jM_j$
 is a graded $R$-module, then its {\it graded shift} 
is the graded module $M(i)$  with $M(i)_j=M_{i+j}$ for each $j\in\mathbb Z$.  By $\underline\md R$ 
we denote the stable category of finitely generated {\it graded} $R$-modules.  
The objects of $\underline\md R$ are the finitely generated graded $R$-modules, 
and $\underline\Hom_R(M,N)$ denotes the vector space $\Hom_R(M,N)/{\mathcal P(M,N)}$ 
where $\mathcal P(M,N)$ is the space of the degree zero homomorphisms from $M$ to $N$ 
factoring through a free $R$-module. For every finitely generated non projective graded 
$R$-modules $L$ and $M$ and homomorphism  $L\stackrel{u}\rightarrow M$ in 
$\md R$ we have a pushout diagram (\cite{H}) 
$$ 
\xymatrix
{ &0\ar[r]&L \ar^{u}[d]
\ar[r] &I(L)\ar[d]\ar[r]
&\Omega^{-1}L\ar@{=}[d] \ar[r] &0\\ &0 \ar[r] &M \ar^{v}[r]
&N\ar^{w}[r] &\Omega^{-1}L\ar[r]& 0} 
$$
\noindent where $\Omega$ (respectively $\Omega^{-1}$) denote the syzygy (cosyzygy) functors  
$$\Omega,\Omega^{-1}\colon\underline\md R\rightarrow\underline\md R$$ and $I(L)$ 
denotes the injective envelope of $L$. It is well-known that $\underline\md R$ 
is a triangulated category where the shift (or suspension) functor is given by the first cosyzygy, 
and every distinguished triangle is isomorphic in $\underline\md  R$ to a triangle of the form 
 $$L\stackrel{u}\longrightarrow M\stackrel{v}\longrightarrow N\stackrel{w}\longrightarrow\Omega^{-1}L$$ 
obtained  as above from a pushout diagram, where we denote again by $u,v,w$ the induced morphisms in the stable category.
\noindent So triangle closure in the stable category is induced by extension closure in the 
category of finitely generated graded $R$-modules. Note also, that, for every 
$R$-modules $M$ and $N$ we have:
$$\underline\Hom_R(\Omega M,N)\cong\Ext^1_R(M,N).$$
 \noindent Recall that a full subcategory $\mathcal T$ of $\underline\md  R$ 
is {\it thick}, if it is closed under triangles, direct summands and the suspension functor. 
We are interested in describing  the thick subcategories of $\mathcal D^b(\coh {\bf P}^n)$ 
or equivalently (see \cite{BGG}) of $\underline\md R$. In this paper we restrict to
thick subcategories consisting of modules of small complexity.
We will need some 
background on the stable graded category of $R$-modules and we first recall that $R$ is 
a symmetric Koszul algebra whose Koszul dual is the polynomial algebra $S$ in $n+1$ indeterminants.  Let $M$ be 
a finitely generated graded $R$-module and let 
$$(\mathbb{F})\hspace*{0.5cm} \cdots\rightarrow F^2 \rightarrow F^1 \rightarrow F^0 \rightarrow M \rightarrow 0$$
\noindent be a minimal graded free resolution of $M$. The module  $M$ is called a {\em linear module} 
if each free module $F^i$ in ($\mathbb{F}$) is generated in degree $i$.

\noindent  Considering this  minimal free resolution $\mathbb{F}$  of $M$, we let
the {\em i-th Betti number} $\beta_i(M)$ of $M$ be the rank of the free module $F_i$. 
Recall that the {\it complexity} of a finitely generated module $M$ over the exterior algebra 
is the infimum of the set 
$$\{d\in\mathbb N|\beta_i(M)\le ci^{d-1}\ \text{for some positive}  \ c\in\mathbb Q \  \text{and all} \  i\geq 0\}$$ 
\noindent So an $R$-module $M$ has {\it  complexity 1}, if its Betti numbers $\beta_i(M)$ are bounded.

\medskip
\noindent Note that for a finite dimensional self-injective algebra, the complexity of a module can also be
defined by using  $\dim_{\Bbbk} \Omega^iM$ instead of $\beta_i(M)$, and the module $M$
has complexity 0, if and only if it is projective. It is 
well-known that for every module  $M$ over the exterior algebra in $n+1$ indeterminates we have $\cx M\le n+1$. 
The complexity $n+1$ is attained; for instance the trivial module has this complexity. 

\medskip
\noindent A very useful characterization of the complexity of an $R(V)$-module
$M$, using {\em M-regular sequences} is shown in \cite[Section 3]{AAH}. It follows directly from this characterization, that the $R$-modules of complexity smaller than $\dm_{\Bbbk} V$ have even 
dimension, and that for a subspace $U\subseteq V$, the factor module $R/\langle U \rangle$ has complexity 
$\dm_{\Bbbk} U$.

\noindent In particular, if  $\dm_{\Bbbk} U = 1$, then $U$ is a point in the projective space  ${\bf P}(V)$ and  $M_U = R/\langle U\rangle$ is cyclic and has 
complexity 1. 

\medskip
\noindent We will  need the following result due to Eisenbud \cite{E}:
\begin{thm}\label{Eis} Let $M$ be a finitely generated  graded $R$-module of complexity 1. 
\item[(a)] If $M$ has no indecomposable projective direct summand, then, as $R$-modules, $M\cong \Omega M$, 
and as graded modules $M\cong\Omega M(1)$.
\item [(b)] The module $M$ has a filtration 
$0=M_0\subset M_1\subset \cdots \subset M_{s-1}\subset M_s =M$
with $M_i/M_{i-1} \cong  M_{\xi_i}(j_i)$ for some $\xi_i \in {\bf P}(V)$ and some $j_i\in \mathbb Z$.
 \qed
\end{thm}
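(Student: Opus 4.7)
The plan is to deduce both parts from the structure of the minimal graded free resolution of $M$, using the hypothesis $\cx M = 1$ to force eventual periodicity with period one.

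\medskip

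\textbf{For part (a),} since $\cx M = 1$ the Betti numbers of $M$ are bounded by some constant. A degree-counting argument combined with the minimality of the resolution shows that from some syzygy onwards every $F^i$ is generated in a single degree (the resolution is eventually \emph{linear}): any differential with a quadratic or higher entry would, over the remainder of the resolution, inflate the Betti numbers, contradicting the bound. Boundedness then forces the ranks of the $F^i$ in the linear part to stabilize at a fixed value $r$, producing a periodic tail
\[
\cdots \to F(-2) \to F(-1) \to F \to \cdots
\]
for a fixed free module $F$ of rank $r$ and a fixed matrix $A$ of linear forms satisfying $A \cdot A = 0$ (automatic over $R$, since $\xi^2 = 0$ for every linear $\xi$). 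The absence of projective summands ensures that this periodic tail computes the full resolution of $M$, not merely its eventual behaviour, and one reads off $\Omega M(1) \cong M$ as graded modules, whence $\Omega M \cong M$ as ungraded $R$-modules.

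\medskip

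\textbf{For part (b),} I would induct on $\dim_{\Bbbk} M$. If $M$ has an indecomposable projective summand $R(j)$, filter it as $0 \subset \xi R(j) \subset R(j)$ for any nonzero linear form $\xi$; both quotients are shifted copies of $M_\xi$. Combined with the filtration of the complement given by induction, this reduces to the case where $M$ has no projective summand, so part (a) applies. The key step is to produce a submodule $N \subseteq M$ isomorphic to $M_\xi(j)$ for some $\xi$ and some $j$. For this, the support variety of $M$ in ${\bf P}(V)$ has dimension $\cx M - 1 = 0$, hence is a nonempty finite set of points, and I fix such a point $\xi$. Using either the periodic structure from (a) or the $M$-regular-sequence characterization of complexity from \cite{AAH}, one locates a homogeneous element $m \in M$ of some degree $j$ whose annihilator is exactly the principal ideal $\xi R$, so that $Rm \cong M_\xi(j)$. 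Setting $N = Rm$, the quotient $M/N$ has strictly smaller dimension and complexity at most one, and the inductive hypothesis yields a filtration of $M/N$ with the required factors, which lifts to a filtration of $M$.

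\medskip

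The main obstacle is the construction of $N \cong M_\xi(j)$ in part (b): it is not enough that $\xi$ merely annihilate some element of $M$; the annihilator of the chosen generator must equal precisely the principal ideal $\xi R$, with no extra generators. Controlling this requires exploiting the periodic structure from (a), so that $N$ can be identified with a cyclic summand coming from a basis vector of the stabilized free module $F$, rather than an arbitrary cyclic submodule of $M$.
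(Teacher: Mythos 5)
The paper itself offers no proof of this statement: it is quoted from Eisenbud's \emph{Periodic resolutions over exterior algebras} \cite{E} with a \qed, so the only meaningful comparison is with Eisenbud's argument, whose overall shape (eventual linearity and periodicity of the minimal free resolution, then a filtration with cyclic subquotients $M_{\xi}(j)$) your sketch does echo. The problem is that both of your pivotal steps are asserted rather than proved, and each one hides the actual content of the theorem. In part (a), the implication ``bounded Betti numbers $\Rightarrow$ the resolution is eventually linear'' is not a ``degree-counting argument'': it rests on the finite generation of $\Ext_R^*(M,\Bbbk)$ over the polynomial Koszul dual $S$ (equivalently, on the fact that a sufficiently high syzygy of $M$ is weakly Koszul, which the paper cites to \cite{HI,MVZ1}), a genuine theorem rather than an observation. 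As literally stated your claim is even false for decomposable $M$: for $M=M_\xi\oplus M_\xi(1)$ every $F^i\cong R(-i)\oplus R(-i+1)$ is generated in two degrees forever, so you must first reduce to indecomposable summands. Worse, even granting an eventually linear tail with constant rank $r$, nothing in your argument shows the successive differentials can be taken to be a \emph{single} matrix $A$ rather than a sequence $A_1,A_2,\dots$ of distinct matrices of linear forms with $A_{i+1}A_i=0$; that strict periodicity of period one \emph{is} the theorem. Your parenthetical that $A\cdot A=0$ is ``automatic since $\xi^2=0$'' is also false for $r>1$: the entries of $A^2$ are sums of products of distinct linear forms and need not vanish.

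Part (b) has the same defect: the existence of a homogeneous $m\in M$ with $\ann(m)$ \emph{exactly} the principal ideal $\langle\xi\rangle$ is the whole point, and you explicitly defer it (``one locates\dots'', ``controlling this requires exploiting the periodic structure''). The rank-variety characterization of \cite{AAH} only yields, for each of the finitely many $\xi$ in the support, an element killed by $\xi$ and not lying in $M\xi$; it gives no control over the rest of the annihilator. And a basis vector of the stabilized free module $F$ generates a cyclic piece of a high syzygy $\Omega^{N}M$, not of $M$, so it is unclear how you intend to transport that submodule back to $M$ itself. Until these two steps are supplied --- eventual linearity plus genuine period-one periodicity in (a), and the construction of the cyclic submodule $M_\xi(j)\subseteq M$ in (b) --- the proposal is a correct outline of the statement, not a proof. (The surrounding reductions are fine: peeling off projective summands via $0\subset \xi R(j)\subset R(j)$, the induction on $\dim_{\Bbbk}M$, and $\cx(M/N)\le 1$ from the rotated triangle.)
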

\medskip

\noindent This result has no counterpart for modules with complexity bigger then one, as the following
example shows:

\begin{Example}
For $R=R(x,y,z)$ let $M$ be the 4-dimensional $R$-module with basis $\{ e_1,e_2,f_1,f_2\}$ and the
following operations of $R$ on $M$: $e_iz = f_i$, $e_1x=e_2y=f_ix=f_iy=f_iz=0$ and $e_1y=f_2$, $e_2x = f_1$.
One directly checks that $(z)$ is a maximal $M$-regular sequence, hence $ \cx M =2$ by \cite{AAH}.  Any proper
indecomposable submodule of $M$ has odd dimension, hence complexity 3. It is not hard to prove
that the module $M$ is linear.
\end{Example}

\medskip
\noindent We mention the following most probably well-known result, and we include a proof for the convenience of the reader:

\begin{lemma}  Let $U$ be a nonzero subspace of $V$.Then, the module $M=R/{\langle U\rangle}$ is linear.
\end{lemma}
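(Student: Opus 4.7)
I would induct on $k = \dm_\Bbbk U$.

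For the base case $k=1$, I would write $U=\Bbbk\xi$ and observe that since $\xi^2=0$ in $R$, the kernel of left multiplication by $\xi$ on $R$ is exactly $\xi R$; iterating this, the minimal graded free resolution of $R/\langle\xi\rangle$ is the $2$-periodic complex
$$\cdots \lra R(-2)\str{\cdot\xi}\lra R(-1)\str{\cdot\xi}\lra R\lra R/\langle\xi\rangle\lra 0,$$
with each $F^i=R(-i)$ generated in degree $i$, so $R/\langle\xi\rangle$ is linear.

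For the inductive step, I would pick a decomposition $U=U'\oplus\Bbbk\xi$ with $\dm_\Bbbk U'=k-1$, and set up the short exact sequence
$$0\lra R/\langle U\rangle(-1)\str{\cdot\xi}\lra R/\langle U'\rangle\str{\pi}\lra R/\langle U\rangle\lra 0,$$
where $\pi$ is the canonical surjection and $\cdot\xi$ sends $[r]$ to $[\xi r]$. Well-definedness of $\cdot\xi$ is immediate from $\xi\langle U\rangle\subseteq\langle U'\rangle$ (using $\xi^2=0$), and its image matches $\ker\pi=\xi\cdot R/\langle U'\rangle$. The only nontrivial point is injectivity of $\cdot\xi$: fixing a basis $x_1,\ldots,x_{k-1}$ of $U'$, taking $\xi=x_k$, and extending to a basis of $V$, I would decompose $r\in R$ by whether its monomial summands contain $x_k$ and verify that $\xi r\in\langle U'\rangle$ forces $r\in\langle U\rangle$.

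With the sequence in place, I would apply $\Tor^R_\bullet(-,\Bbbk)$ and invoke the standard fact that a graded $R$-module $M$ is linear if and only if $\Tor^R_i(M,\Bbbk)$ is concentrated in degree $i$ for every $i$. The outer induction supplies that $\Tor^R_i(R/\langle U'\rangle,\Bbbk)$ is concentrated in degree $i$; a second induction on $i$ then closes the argument using the long exact sequence
$$\cdots\lra\Tor^R_i(R/\langle U\rangle,\Bbbk)(-1)\lra\Tor^R_i(R/\langle U'\rangle,\Bbbk)\lra\Tor^R_i(R/\langle U\rangle,\Bbbk)\lra\Tor^R_{i-1}(R/\langle U\rangle,\Bbbk)(-1)\lra\cdots,$$
since for any $j\neq i$ the group $\Tor^R_i(R/\langle U\rangle,\Bbbk)_j$ is trapped between $\Tor^R_i(R/\langle U'\rangle,\Bbbk)_j=0$ (outer induction) and $\Tor^R_{i-1}(R/\langle U\rangle,\Bbbk)_{j-1}=0$ (inner induction on $i$, as $j-1\neq i-1$).

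The main obstacle will be verifying the injectivity of multiplication by $\xi$ in the short exact sequence; once that is in hand, the $\Tor$ long exact sequence together with the double induction finishes the argument routinely.
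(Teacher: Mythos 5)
Your plan is correct, and every step checks out: the annihilator of $\xi$ in $R$ is indeed $\xi R$, giving the $2$-periodic linear resolution in the base case; the sequence $0\to R/\langle U\rangle(-1)\stackrel{\cdot\xi}{\longrightarrow} R/\langle U'\rangle\to R/\langle U\rangle\to 0$ is exact (injectivity follows exactly as you indicate, by splitting $R=A\oplus \xi A$ with $A$ the exterior algebra on a complementary basis and comparing components); and the double induction through the $\Tor$ long exact sequence, together with the criterion that $M$ is linear if and only if $\Tor^R_i(M,\Bbbk)$ is concentrated in degree $i$, closes the argument. However, this is a genuinely different route from the paper's. The paper chooses a basis so that $U=\langle x_0,\ldots,x_k\rangle$, writes $R$ as the skew tensor product $R(x_0,\ldots,x_k)\boxtimes R(x_{k+1},\ldots,x_n)$, identifies $R/\langle U\rangle\cong\Bbbk\boxtimes R(x_{k+1},\ldots,x_n)$, and invokes the fact that skew tensor products of linear modules are linear; this is essentially a two-line argument, but it leans on that cited structural fact about skew tensor products. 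Your argument is more elementary and self-contained: it needs only the standard homological characterization of linearity and an explicit short exact sequence, at the price of a longer double induction and the hands-on verification that multiplication by $\xi$ is injective modulo $\langle U'\rangle$. Both are fine; the paper's proof also makes the module's structure (a free module over the "complementary" exterior algebra, killed by $U$) more transparent, which is implicitly used elsewhere in the paper, while yours generalizes more readily to situations where no such tensor decomposition is available.
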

\begin{proof} The result is clear if $U=V$ so assume that that $U$ is a proper subspace of $V$. Let $\{x_0,\ldots ,x_n\}$ be a basis of $V$. Without loss of generality we may assume that $U$ is spanned by $\{x_0,\ldots ,x_k\}$ for some $k<n$. Note that $R=R(x_0,\ldots ,x_k)\boxtimes R(x_{k+1},\ldots,x_n)$ where $\boxtimes$ denotes the skew tensor product (see \cite{PP,MVZ2}). Note also that we have an isomorphism of graded $R$-modules $R/{\langle U\rangle}\cong\Bbbk\boxtimes R(x_{k+1},\ldots,x_n)$ and, as skew tensor products of linear modules are linear, the proof is complete.   
\end{proof}

\medskip
\begin{Example}\label{complexity example}

\noindent   Let $R = R(x,y,z)$, and let $M$ be a four dimensional module with Loewy length two, generated in degree $0$, and with
basis $\{e,ex,ey,ez\}$. Then there are no $M$-regular elements, hence $\cx M = 3$, by \cite{AAH}.
If $M' = M/ M\langle\xi\rangle$,  where $\xi$ is a no zero linear form, then $\dm_{\Bbbk}M' = 3$ and $\cx M' = 3$.
\end{Example}
\medskip

\noindent A graded $R$ module $M$ is called {\it weakly Koszul} if its quadratic dual 
$M^!$ is a linear $S$-module (see \cite{GM,HI}), where $S$ denotes the 
$\Ext$-algebra $\Ext^*_R(R/{J} ,R/{J})$, with $J$ being the radical of $R$. Note that $S\cong\Bbbk [y_0,\ldots ,y_n]$, where $\{y_0,\ldots ,y_n\}$ is a dual basis of $\{x_0,\ldots ,x_n\}$.
The weakly Koszul modules 
can be described also in terms of certain types of extensions of shifts of linear modules (see \cite[p. 679]{MVZ1}). 
First, we recall the following definition:

\begin{dfn} An extension $0\rightarrow L\rightarrow M\rightarrow N\rightarrow 0$ of graded $R$-modules is called 
relative if for each $k\ge 0$ we have $MJ^k\cap L=LJ^k$. 
\end{dfn}

\noindent For instance, every short exact sequence $0\rightarrow L\rightarrow M\rightarrow N\rightarrow 0$ of linear 
$R$-modules is a relative extension. We have the following (see \cite{MVZ1}):

\begin{pro}\label{filtration by linear}
Let $M$ be a weakly Koszul $R$-module and assume that a minimal set of homogeneous 
generators of $M$ is distributed in degrees $i_0<i_1<\ldots <i_p$. Let $L=\langle M_{i_0}\rangle$ 
be the homogeneous submodule of $M$ generated by the degree $i_0$ part of $M$. Then $L$ is a 
graded shift of a linear module, $M/L$ is a weakly Koszul module generated in degrees $i_1<\ldots <i_p$, 
and the extension $0\rightarrow L\rightarrow M\rightarrow M/L\rightarrow 0$ is also a relative extension.
\end{pro}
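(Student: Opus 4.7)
The plan is to reduce to $i_0 = 0$ by the degree shift $M \mapsto M(i_0)$, which preserves all three properties in question, and then to work with the minimal graded free resolution $F^\bullet \to M$. Weak Koszulity means each $F^n$ decomposes as a direct sum of ``strands'' indexed by the generator degrees: $F^n = F^n_L \oplus F^n_{>0}$, where $F^n_L$ is the summand generated in internal degree $n$ and $F^n_{>0}$ collects the remaining strands, generated in internal degrees $i_1 + n, \ldots, i_p + n$.

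The first main step is to show that the degree-$0$ strand $F^\bullet_L$ is a subcomplex of $F^\bullet$. Given a generator $e$ of $F^n_L$ (sitting in internal degree $n$), minimality of the resolution forces $d(e) \in (J F^{n-1})_n$, and a direct degree computation gives $(J F^{n-1})_n = R_1 \cdot (F^{n-1})_{n-1}$. For each $l \geq 1$, the strand of $F^{n-1}$ indexed by $i_l$ is generated in internal degree $i_l + n - 1 \geq n > n - 1$, so its internal degree $(n-1)$ piece vanishes; hence $(F^{n-1})_{n-1} = (F^{n-1}_L)_{n-1}$ and $d(e) \in F^{n-1}_L$. Thus $F^\bullet_L \subseteq F^\bullet$ is a subcomplex, the quotient $F^\bullet_{>0}$ is also a complex, and from the short exact sequence $0 \to F^\bullet_L \to F^\bullet \to F^\bullet_{>0} \to 0$ with its long exact homology sequence, both the claim that $F^\bullet_L$ resolves $L$ and the claim that $F^\bullet_{>0}$ resolves $M/L$ reduce to showing $H^i(F^\bullet_{>0}) = 0$ for all $i \geq 1$.

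Granting those vanishings, $F^\bullet_L$ is a minimal linear free resolution of $L$, making $L$ a shift of a linear module, and $F^\bullet_{>0}$ is a minimal free resolution of $M/L$ whose strand structure has generator degrees $i_1 < \cdots < i_p$, making $M/L$ weakly Koszul with generators in these degrees. The relativity $MJ^k \cap L = LJ^k$ is then automatic from degree considerations: since $M$ is supported in nonnegative degrees, $(MJ^k)_j = 0$ for $j < k$; and for $j \geq k$ every element of $L_j = L_0 R_j$ factors as $L_0 R_{j-k} \cdot R_k \subseteq (MJ^k)_j$ (using that $R$ is generated in degree one), matching $(LJ^k)_j = L_j$ by the linearity of $L$.

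The main obstacle is the vanishing of $H^i(F^\bullet_{>0})$ for $i \geq 1$, which requires using the weakly Koszul hypothesis beyond the mere strand decomposition of the graded free modules $F^n$. I would argue for this via Koszul duality: the $S$-module $M^!$ is linear, and the projection $M^! \to L^!$ corresponding to the degree-$0$ strand has kernel $(M/L)^!$, which inherits linearity from the surjection and the linearity of $M^!$. Transferring back through the BGG/Koszul equivalence then yields the desired vanishing of higher homology of $F^\bullet_{>0}$, completing the argument.
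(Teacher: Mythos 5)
Your reduction to $i_0=0$, the check that $F^\bullet_L$ is a subcomplex once one knows the strand structure of the $F^n$, and the verification of relativity are fine; in fact the relative-extension part is purely formal, needing only that $L$ is generated in the minimal degree of $M$, so that $(MJ^k)_j=0$ for $j<i_0+k$ while $L_j=L_{i_0}R_{j-i_0}\subseteq (MJ^k)_j=(LJ^k)_j$ for $j\ge i_0+k$. Two caveats before the main point: the strand decomposition of the $F^n$ is not what weak Koszulity ``means'' in this paper (the definition is that $M^!$ is a linear $S$-module); it is a consequence of the generation of $M^!$ in a single cohomological degree and should be derived, not asserted. Also, the paper itself gives no proof of this proposition -- it is quoted from \cite{MVZ1} -- so your argument has to stand on its own.

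It does not, because the step you defer to the end is the entire content of the proposition, and the Koszul-duality argument you sketch for it is circular. Identifying the kernel of $M^!\to L^!$ with $(M/L)^!$ requires the long exact sequence
$\cdots\to\Ext^n_R(M/L,\Bbbk)\to\Ext^n_R(M,\Bbbk)\to\Ext^n_R(L,\Bbbk)\to\Ext^{n+1}_R(M/L,\Bbbk)\to\cdots$
to split into short exact sequences, i.e.\ the connecting maps must vanish -- and by the very degree bookkeeping you set up this is equivalent to the acyclicity $H_i(F^\bullet_{>0})=0$ you are trying to prove; likewise, describing $L^!$ as ``the quotient of $M^!$ corresponding to the degree-$i_0$ strand'' already presupposes that $L$ has a linear resolution, which is conclusion (i). The claim that the kernel ``inherits linearity from the surjection and the linearity of $M^!$'' is not a valid principle: kernels and submodules of linear modules need not be linear, and this is exactly where the weakly Koszul hypothesis must do real work. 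Finally, even granting the vanishing, your inference that $M/L$ is weakly Koszul because its minimal resolution has strands only in the generator degrees $i_1,\ldots,i_p$ is false as a general implication: over $R(x_0,x_1)$ the module $R/\langle x_0x_1\rangle\oplus R(-1)$ has Betti rows exactly equal to its generator degrees $\{0,1\}$, yet it is not weakly Koszul, since the submodule generated by its degree-zero part is $R/\langle x_0x_1\rangle$, which has no linear resolution. So the linearity of $L$, the acyclicity of $F^\bullet_{>0}$, and the weak Koszulity of $M/L$ all still require an honest argument -- for instance on the $S$-side, proving that $M^!\to L^!$ is surjective with linear kernel, which is how the result is actually established in \cite{MVZ1}.
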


\noindent Using the above proposition, one can use an inductive procedure to show that every weakly 
Koszul module can be filtered with linear modules (and their degree shifts) using relative extensions. The following proposition was proved in \cite{GM}.

\begin{pro}\label{syzygy behavior} Let $0\rightarrow L\rightarrow M\rightarrow N\rightarrow 0$ 
be a relative extension of weakly Koszul modules. 
Then for each $k\ge 0$ we have relative extensions of weakly Koszul modules 
$0\rightarrow\Omega^k L\rightarrow\Omega^k M\rightarrow\Omega^k N\rightarrow 0$.
\end{pro}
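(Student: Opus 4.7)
The plan is to induct on $k$. The case $k=0$ is the hypothesis, and once the case $k=1$ is settled, the general case follows by iteratively applying it to the short exact sequence $0\to\Omega^{k-1}L\to\Omega^{k-1}M\to\Omega^{k-1}N\to 0$ produced at the previous step.

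For the case $k=1$, the relative condition at level one, $L\cap MJ=LJ$, gives a short exact sequence of tops $0\to L/LJ\to M/MJ\to N/NJ\to 0$. It follows that the minimal projective cover of $M$ decomposes as $P(M)\cong P(L)\oplus P(N)$, and the horseshoe lemma yields a commutative diagram of minimal projective covers over the original extension. The snake lemma then delivers the exact syzygy sequence $0\to\Omega L\to\Omega M\to\Omega N\to 0$.

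To show this syzygy sequence is relative, I would combine two ingredients. First, minimality of the cover forces $\Omega M\subset P(M)J$, so $(\Omega M)J^k\subset P(M)J^{k+1}=P(L)J^{k+1}\oplus P(N)J^{k+1}$. Identifying $\Omega L=\Omega M\cap(P(L)\oplus 0)$, any $(a,0)\in\Omega L\cap(\Omega M)J^k$ satisfies $a\in\Omega L\cap P(L)J^{k+1}$. Second, I would establish the identity $\Omega L\cap P(L)J^{k+1}=(\Omega L)J^k$ for every weakly Koszul $L$. For $L$ linear, this is immediate from degree considerations, since $\Omega L$ is generated in a single degree one above the generating degree of $L$, so it intersects $P(L)J^{k+1}$ in precisely $(\Omega L)J^k$. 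For general weakly Koszul $L$ one reduces to the linear case by induction along the filtration of $L$ by shifts of linear modules provided by Proposition \ref{filtration by linear}. This verification of the identity is the main obstacle of the proof.

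Finally, to see that $\Omega L,\Omega M,\Omega N$ are themselves weakly Koszul, I would again invoke the filtrations from Proposition \ref{filtration by linear}. The syzygy of a shift of a linear module is again such a shift (generated one degree higher), and by the relativity of the syzygy sequence established above applied to each step of the filtration, one obtains analogous filtrations of $\Omega L,\Omega M,\Omega N$ by shifts of linear modules with relative steps. This exhibits each as weakly Koszul and completes the induction.
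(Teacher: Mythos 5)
The paper itself gives no argument for this proposition (it is imported from \cite{MVZ1}), so the only question is whether your reconstruction is complete. Your skeleton is the natural one: relativity in degree one makes the sequence of tops exact, so the horseshoe lemma with minimal covers gives $P(M)\cong P(L)\oplus P(N)$ and the exact syzygy sequence, and relativity of that sequence does reduce, exactly as you say, to the identity $\Omega L\cap P(L)J^{k+1}=(\Omega L)J^k$. But the step you yourself call the main obstacle is left as a gesture. The reduction ``to the linear case by induction along the filtration'' needs an actual argument, for instance: for a relative step $0\to A\to L\to C\to 0$ supplied by Proposition \ref{filtration by linear} one has $P(L)=P(A)\oplus P(C)$ and a surjection $\Omega L\to\Omega C$ induced by the projection onto $P(C)$; given $x\in\Omega L\cap P(L)J^{k+1}$, its image in $\Omega C$ lies in $\Omega C\cap P(C)J^{k+1}=(\Omega C)J^k$ by induction on the number of generation degrees, so after subtracting a lift of that element (which lies in $(\Omega L)J^k\subseteq P(L)J^{k+1}$) one is left with an element of $\Omega A\cap P(A)J^{k+1}=(\Omega A)J^k$ by the linear base case, whence $x\in(\Omega L)J^k$. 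With this spelled out, the relativity part of your proof is sound.

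The genuine gap is the last step, the claim that $\Omega L,\Omega M,\Omega N$ are again weakly Koszul. What your construction actually produces is a filtration of each syzygy by shifts of linear modules with relative steps; to conclude weak Koszulness you invoke the implication ``such a filtration implies weakly Koszul,'' which is the converse of the only direction the paper states after Proposition \ref{filtration by linear}. With the paper's definition of weakly Koszul (the quadratic dual $M^!$ is a linear $S$-module), this converse is itself a nontrivial theorem of \cite{MVZ1} (see also \cite{HI}), not something one may assert in a line. The point is not cosmetic: your induction on $k$ applies the key identity to $\Omega^{k-1}L$, $\Omega^{k-1}M$, $\Omega^{k-1}N$, and as you stated it that identity is only available for weakly Koszul modules. (The induction itself can be repaired by carrying the invariant ``admits a relative filtration by shifted linear modules,'' which your argument does propagate; but the weak Koszulness asserted in the proposition then still requires either a proof of the converse characterization or a separate argument via the behaviour of $\Ext^*_R(-,\Bbbk)$ under syzygies.) As written, that justification is missing.
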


\noindent An immediate consequence is that if  $M$ is a weakly Koszul $R$-module generated in 
degrees $i_0<\cdots <i_p$, then $\Omega M$ is also weakly Koszul and is generated in 
degrees $i_0+1<\cdots <i_p+1$.  

\medskip
\noindent An easy inductive argument 
shows that the following result holds over any selfinjective Koszul algebra :

\begin{prop}\label{complexity middle} Let $0\rightarrow A\rightarrow B\rightarrow C\rightarrow 0$ be a relative 
extension of weakly Koszul modules. Then $\cx B=\max\{\cx A,\cx C\}$. \qed
\end{prop}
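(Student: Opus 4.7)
The plan is to reduce the complexity equality to a dimension-count by exploiting Proposition~\ref{syzygy behavior}, which says that relative extensions of weakly Koszul modules remain short exact after applying $\Omega$, hence after applying any $\Omega^k$. This is the crucial input that a general short exact sequence would not provide, since the Horseshoe Lemma only yields a (possibly non-minimal) projective cover of the middle term, so that ordinary syzygies of $B$ may differ from $\Omega A \oplus \Omega C$ up to projective summands.

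First, I would record the standard fact that for a finite-dimensional self-injective algebra the complexity of a module $M$ can be measured either through its Betti numbers $\beta_i(M)$ or through $\dim_\Bbbk \Omega^i M$: one has $\beta_i(M) \le \dim_\Bbbk \Omega^i M \le (\dim_\Bbbk R)\,\beta_i(M)$, so the two growth rates agree and define the same $\cx$. Over our Koszul self-injective $R$ this is routine.

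Second, applying Proposition~\ref{syzygy behavior} iteratively, for every $k\ge 0$ we obtain a short exact sequence
\[
0 \longrightarrow \Omega^k A \longrightarrow \Omega^k B \longrightarrow \Omega^k C \longrightarrow 0,
\]
so by additivity of $\Bbbk$-dimension
\[
\dim_\Bbbk \Omega^k B \;=\; \dim_\Bbbk \Omega^k A \;+\; \dim_\Bbbk \Omega^k C.
\]
From this identity the inequality $\cx B \le \max\{\cx A,\cx C\}$ is immediate: if $\dim_\Bbbk \Omega^k A \le c_1 k^{a-1}$ and $\dim_\Bbbk \Omega^k C \le c_2 k^{c-1}$ then $\dim_\Bbbk \Omega^k B \le (c_1+c_2)\,k^{\max(a,c)-1}$. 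For the reverse inequality, the same identity gives $\dim_\Bbbk \Omega^k A \le \dim_\Bbbk \Omega^k B$ and $\dim_\Bbbk \Omega^k C \le \dim_\Bbbk \Omega^k B$, so $\cx A \le \cx B$ and $\cx C \le \cx B$, whence $\max\{\cx A,\cx C\}\le \cx B$.

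The only delicate point is really the appeal to Proposition~\ref{syzygy behavior}; once that is in hand, everything is a dimension count and the inductive step hinted at in the statement is just the observation that applying $\Omega$ once preserves the relative-extension hypothesis, so the argument propagates through all syzygies. I do not foresee a genuine obstacle beyond checking that one may equivalently track $\dim_\Bbbk \Omega^k$ instead of Betti numbers, which is standard for self-injective algebras.
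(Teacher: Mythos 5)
Your argument is correct and is essentially the argument the paper intends: the paper leaves the proof to the reader as an ``easy inductive argument,'' which amounts to exactly your use of Proposition~\ref{syzygy behavior} to get short exact sequences $0\to\Omega^k A\to\Omega^k B\to\Omega^k C\to 0$ for all $k$, hence additivity of $\dim_\Bbbk\Omega^k$ (equivalently of Betti numbers, since minimality of the syzygies makes the projective covers add up), from which both inequalities for the complexity follow. Your preliminary remark that $\cx$ may be measured by $\dim_\Bbbk\Omega^k$ instead of $\beta_k$ over a self-injective algebra is the same equivalence the paper records in Section~\ref{intro}, so no further justification is needed.
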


\noindent What makes the weakly Koszul modules relatively ubiquitous over the exterior algebra 
is the fact that given any finitely generated $R$-module $M$, then $\Omega^iM$ is weakly  
Koszul for some positive integer $i$ (see for instance \cite{HI,MVZ1}).  If $M$ is indecomposable with
$\cx M =1$, then by Eisenbud's theorem \ref{Eis} $\Omega^rM \cong M(-r)$. Thus Theorem \ref{Eis} 
also implies the following

\begin{cor}\label{cx1}All the indecomposable $R$-modules of complexity one are weakly Koszul. \qed
\end{cor}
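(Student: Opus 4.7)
\medskip
\noindent\emph{Proof plan for Corollary \ref{cx1}.} The plan is to reduce the claim to the already-cited fact that every finitely generated $R$-module becomes weakly Koszul after sufficiently many applications of $\Omega$, using Eisenbud's structure theorem to propagate weak Koszulity back from a high syzygy to $M$ itself.

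First, observe that if $M$ is indecomposable of complexity one, then $M$ is not projective (projective modules have complexity zero). Hence Theorem \ref{Eis}(a) applies, giving an isomorphism of graded modules $M\cong\Omega M(1)$, or equivalently $\Omega M\cong M(-1)$. Iterating this, for every $r\ge 0$ we obtain a graded isomorphism
\[
\Omega^r M\;\cong\;M(-r).
\]

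Next, I would invoke the general result mentioned just above the corollary (\cite{HI,MVZ1}): there exists a positive integer $r$ such that $\Omega^r M$ is weakly Koszul. Combining this with the isomorphism of the previous paragraph, the shifted module $M(-r)$ is weakly Koszul.

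Finally, I would argue that the weakly Koszul property is invariant under the graded shift functor. This is immediate from the definition: the quadratic dual $(M(-r))^!$ differs from $M^!$ only by a graded shift in the category of $S$-modules, and linearity of an $S$-module is preserved under such shifts. Therefore $M$ itself is weakly Koszul, completing the argument. The only potential subtlety is the shift-invariance of weak Koszulity, but this is a direct consequence of the definition rather than a real obstacle; the substantive inputs are Eisenbud's periodicity (Theorem \ref{Eis}) and the existence of a weakly Koszul high syzygy.
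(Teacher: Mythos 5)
Your argument is correct and is essentially the paper's own proof: the paper also combines the fact that some syzygy $\Omega^r M$ is weakly Koszul (cited from \cite{HI,MVZ1}) with Eisenbud's isomorphism $\Omega^r M\cong M(-r)$ from Theorem \ref{Eis}, the shift-invariance of weak Koszulity being taken for granted. Your explicit remarks on non-projectivity and shift-invariance are fine but add nothing beyond what the paper leaves implicit.
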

\smallskip

\noindent We also have the following (\cite{MVZ1}):

\begin{prop}\label{background} Let $R$ be the exterior algebra in $n+1$ indeterminates and $M\in\md R$ be an 
indecomposable graded non projective module. Then: 
\begin{enumerate}
\item $\tau M=\Omega^2M(n+1)$, where $\tau$ denotes the \AR translate in the category of graded $R$-modules.
\item If $n=1$  and $M$ is weakly Koszul, then  $M$ is linear, or a graded shift of a linear module, 
and $M$ and $\tau M$ are both generated in the same degree.
\end{enumerate}
\end{prop}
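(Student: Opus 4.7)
For (1), the plan is to use the Auslander--Reiten formula $\tau M\cong \Omega^2\nu M$ valid for any self-injective algebra, interpreted in the graded setting. Since $R=\bigwedge V$ is a graded symmetric $\Bbbk$-algebra whose one-dimensional socle is concentrated in the top degree $n+1$ (spanned by $x_0\cdots x_n$), the graded Nakayama functor on $\underline{\md}R$ is simply the shift $\nu M=M(n+1)$. Substituting yields $\tau M=\Omega^2 M(n+1)$. A quick sanity check on $M=\Bbbk$ with $n=1$ confirms the convention: $\Omega^2\Bbbk$ is generated in degree $2$, so $\Omega^2\Bbbk(2)$ is again generated in degree $0$, matching the simple.

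For (2), fix $n=1$ and let $M$ be indecomposable, non-projective, and weakly Koszul with minimal generators in degrees $i_0<i_1<\cdots<i_p$. If $p=0$ there is nothing to prove, so assume $p\ge 1$. By Proposition \ref{filtration by linear} there is a relative extension $0\to L\to M\to M/L\to 0$ in which $L$ is a graded shift of a linear module generated in degree $i_0$, and $M/L$ is weakly Koszul, generated in degrees $i_1<\cdots<i_p$. I would argue that for $n=1$ such a relative extension must split, contradicting the indecomposability of $M$. The splitting should follow from the vanishing of the relative-extension class governing such filtrations, exploiting the Koszul duality between $R$ and the polynomial ring $S=\Bbbk[y_0,y_1]$: indecomposable linear $S$-modules are (up to shift) only $S$, $\Bbbk$, and the cyclic modules $S/(\eta)$ for linear forms $\eta$, a list too rigid to support a genuine non-split two-step weakly Koszul extension. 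Making this vanishing precise, and transporting it from the polynomial side back to a splitting of the extension in $\md R$, is the main technical obstacle.

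Once we know $M$ is a graded shift of a linear module generated in a single degree $d$, the equality of generation degrees for $M$ and $\tau M$ follows mechanically. By the remark immediately following Proposition \ref{syzygy behavior}, $\Omega^2 M$ is weakly Koszul and generated in degree $d+2$. Applying part (1), $\tau M=\Omega^2 M(n+1)=\Omega^2 M(2)$ is then generated in degree $d+2-2=d$, matching $M$.
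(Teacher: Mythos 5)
Your argument for part (1) is fine and is the standard one: for the graded self-injective algebra $R$ one has $\tau=\Omega^2\nu$, and since the socle of $R$ sits in degree $n+1$, the graded Nakayama functor is the shift $(n+1)$ (the sign-twist by the Nakayama automorphism $x_i\mapsto(-1)^nx_i$ acts trivially on graded modules, via $m\mapsto(-1)^{|m|}m$, so this works whether or not one insists $R$ is honestly symmetric). Note the paper itself offers no proof of this proposition; it is quoted from \cite{MVZ1}, so there is nothing in-text to compare against beyond this standard computation.

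Part (2), however, has a genuine gap, and you flag it yourself: the entire content of the statement is the splitting of the relative extension $0\to L\to M\to M/L\to 0$ when $M$ has generators in more than one degree, and you leave exactly that step unproved ("the main technical obstacle"). The heuristic you offer to fill it is not sound: your classification of indecomposable linear graded $S$-modules is incomplete (for $S=\Bbbk[y_0,y_1]$ the shifted powers of the maximal ideal $\mathfrak m^k(k)$, for instance $(y_0,y_1)(1)$ with resolution $0\to S(-1)\to S^2\to(y_0,y_1)(1)\to 0$, are indecomposable and linear but not on your list), and even granting such a list you give no mechanism transporting it into a splitting of a specific graded extension over $R$. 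The missing ingredient is much more elementary and does not use Koszul duality at all: since $\dim_\Bbbk R=4$ and $\soc R$ is one-dimensional, any graded module with $MJ^2\neq 0$ contains a copy of $R$, which is injective and splits off; hence your indecomposable non-projective $M$ satisfies $MJ^2=0$. Consequently $L=\langle M_{i_0}\rangle$ is concentrated in degrees $i_0,i_0+1$, while $\Omega(M/L)$ lies inside the radical of a free module generated in degrees $\ge i_1\ge i_0+1$, so it is concentrated in degrees $\ge i_0+2$. Therefore $\Ext^1_R(M/L,L)\cong\underline{\Hom}_R(\Omega(M/L),L)=0$ for degree reasons, the extension splits, and indecomposability forces $p=0$; then Proposition \ref{filtration by linear} applied with a single generation degree gives that $M=\langle M_{i_0}\rangle$ is a graded shift of a linear module. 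With that in hand, your final paragraph (linearity gives $\Omega^2M$ generated in degree $d+2$, so $\tau M=\Omega^2M(2)$ is generated in degree $d$) is correct.
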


\noindent We start with the following observation:

\begin{lemma} Let $M$ be an indecomposable finitely generated graded module over the exterior algebra 
in $n+1$ indeterminates, $n\ge 1$. Then: 
\begin{enumerate}
\item If $\cx M=1$, then for every $n\ge 1$, $\tau M=M(n-1)$. In particular, if $n=1$ then $\tau M=M$. 
\item Let $L\rightarrow M\rightarrow N\rightarrow\Omega^{-1}L$ be a distinguished triangle in 
$\underline\md R$. If $\cx L=\cx M=i$, then $\cx N\le i$. In particular, if $\cx L=\cx M=1$, 
then either $\cx N=1$ or $N$ is free.
\end{enumerate}
\end{lemma}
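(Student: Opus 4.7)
For part (1), my plan is to combine the Eisenbud periodicity recalled in Theorem \ref{Eis}(a) with the formula for the Auslander-Reiten translate in Proposition \ref{background}(1). Since $M$ is indecomposable with $\cx M = 1$, it is non-projective, so Theorem \ref{Eis}(a) yields $\Omega M \cong M(-1)$ as graded modules. Iterating once gives $\Omega^2 M \cong M(-2)$, and consequently
$$\tau M \;=\; \Omega^2 M(n+1) \;\cong\; M(n-1),$$
which specializes to $\tau M \cong M$ in the case $n = 1$. No further identifications are needed.

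For part (2), the key is to translate the distinguished triangle into a short exact sequence. As recalled in the introduction, the triangle $L \to M \to N \to \Omega^{-1}L$ arises, via pushout along $L \to I(L)$, from an honest short exact sequence
$$0 \longrightarrow M \longrightarrow N \longrightarrow \Omega^{-1}L \longrightarrow 0$$
of finitely generated graded $R$-modules. Because minimal graded free resolutions of $L$ and $\Omega^{-1}L$ differ only by a shift, taking cosyzygies preserves complexity, so $\cx\Omega^{-1}L = \cx L = i$. The Horseshoe Lemma applied to this sequence yields $\beta_j(N) \le \beta_j(M) + \beta_j(\Omega^{-1}L)$; since each summand on the right grows at most like a polynomial of degree $i-1$ in $j$, so does $\beta_j(N)$, giving $\cx N \le i$. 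When $i=1$, this forces $\cx N \in \{0,1\}$, i.e.\ either $N$ is free or $\cx N = 1$.

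Neither part presents a serious obstacle given the background: part (1) is a direct computation once the graded form of Eisenbud's periodicity is in hand, and part (2) is essentially the Horseshoe Lemma. The one place deserving attention is the passage from the abstract triangle to a genuine short exact sequence with the correct end terms, but this is supplied verbatim by the pushout diagram displayed in Section~\ref{intro}, so no additional summand-by-projective corrections are required.
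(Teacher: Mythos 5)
Your proof is correct and follows essentially the paper's own route: part (1) is exactly Eisenbud's periodicity $\Omega M\cong M(-1)$ combined with $\tau M=\Omega^2M(n+1)$, and part (2) reduces the triangle to a short exact sequence and bounds Betti numbers (the horseshoe estimate, which the paper leaves implicit). The only nuance concerns your closing claim that no projective corrections are needed: since the given triangle is merely \emph{isomorphic} in $\underline\md R$ to one arising from the pushout construction, the honest exact sequence one obtains is $0\rightarrow M\rightarrow N\oplus F\rightarrow\Omega^{-1}L\rightarrow 0$ for some free module $F$ (this is how the paper states it), but because $\cx(N\oplus F)=\cx N$ this does not affect your estimate.
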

\begin{proof} We know by Eisenbud's theorem that $M\cong\Omega M(1)$, so also $M\cong\Omega^2M(2)$, 
and moreover $M$ must be weakly Koszul. As we saw earlier, $\tau M=\Omega^2M(n+1)$, so the first part 
follows immediately. For the second part it suffices to assume that none of the modules involved is 
free. But then we know that every distinguished triangle $L\rightarrow M\rightarrow N\rightarrow\Omega^{-1}L$ is 
isomorphic in the stable category of $R$ to a triangle obtained from a pushout diagram that yields a 
short exact sequence $0\rightarrow M\rightarrow N\oplus F\rightarrow\Omega^{-1}L\rightarrow 0$ for 
some free $R$-module $F$. This completes the proof of the lemma.
\end{proof}
\noindent Since indecomposable modules of complexity one are weakly Koszul, see Corollary \ref{cx1},
Proposition \ref{filtration by linear} has the following refinement
(see also \cite{GWW1}):

\begin{cor}\label{cx1-filtr} Let $M$ be an indecomposable module of complexity one over the exterior 
algebra and assume that $M$ has a minimal set of generators in degrees $i_0<i_1<\ldots<i_p$. 
Let $L=\langle M_{i_0}\rangle$ be the homogeneous submodule of $M$ generated by the degree $i_0$ 
part of $M$ and let $N=M/L$. Then both $L$ and $N$ are non projective and have complexity one, 
and therefore $M$ can be filtered by graded shifts of linear modules of complexity one.
\end{cor}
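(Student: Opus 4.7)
My plan is to combine Corollary~\ref{cx1}, Proposition~\ref{filtration by linear}, and Proposition~\ref{complexity middle}, and to exploit the self-injectivity of $R$ to rule out that any piece of the resulting filtration is projective. Since $M$ is indecomposable of complexity one, Corollary~\ref{cx1} says $M$ is weakly Koszul, so Proposition~\ref{filtration by linear} supplies a relative short exact sequence
$$0\longrightarrow L\longrightarrow M\longrightarrow N\longrightarrow 0$$
of weakly Koszul modules, in which $L=\langle M_{i_0}\rangle$ is a graded shift of a linear module and $N=M/L$ is generated in degrees $i_1<\cdots<i_p$. If $p=0$ there is nothing to prove, so assume $p\ge 1$; then both $L$ and $N$ are nonzero.

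Applying Proposition~\ref{complexity middle} to this relative extension gives $1=\cx M=\max\{\cx L,\cx N\}$, so both $\cx L$ and $\cx N$ are at most one. To upgrade these bounds to equalities I need to show that neither $L$ nor $N$ is projective. This is where indecomposability of $M$ enters, together with self-injectivity of $R$ (so that projective $R$-modules are injective). Any nonzero projective summand of $L$ would, via the inclusion $L\hookrightarrow M$, split off as a direct summand of $M$, contradicting that $M$ is indecomposable and non-projective. Dually, a nonzero projective summand $P$ of $N$ would yield a surjection $M\twoheadrightarrow N\twoheadrightarrow P$, which splits because $P$ is projective, again exhibiting $P$ as a summand of $M$. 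Hence $L$ and $N$ have no projective summands, so $\cx L=\cx N=1$.

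For the full filtration I iterate the construction on $N$: apply Proposition~\ref{filtration by linear} once more to $N$, which is weakly Koszul and generated in the smaller set of degrees $i_1<\cdots<i_p$, to extract the next linear layer, and so on. At stage $j$ the new layer $M_j/M_{j-1}$ is the submodule of the weakly Koszul module $M/M_{j-1}$ generated in its lowest degree, again a graded shift of a linear module with complexity at most one by Proposition~\ref{complexity middle}. The crucial observation is that a hypothetical projective summand of $M_j/M_{j-1}$ or of the residual quotient $M/M_j$ would, by the same self-injectivity/splitting argument applied to the composed surjection $M\twoheadrightarrow M/M_{j-1}$, descend to a projective summand of $M$, which does not exist. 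Thus every layer has complexity exactly one, and since the number of generator degrees strictly drops at each step, the iteration terminates after finitely many steps with the desired filtration.

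The main obstacle I anticipate is precisely this non-projectivity bookkeeping through the iteration: the induction has to propagate the property ``no non-zero projective summand'' from $M$ to each successive quotient. The cleanest way to handle this is to always push a hypothetical projective summand all the way back to $M$ using that projective equals injective over $R$; the initial indecomposability hypothesis on $M$ then settles every step at once.
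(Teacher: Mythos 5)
Your proposal is correct and follows essentially the same route as the paper: Proposition \ref{filtration by linear} combined with Proposition \ref{complexity middle} and induction on the number of generating degrees, with non-projectivity used to rule out complexity zero at every stage. The only differences are cosmetic: you spell out, via projective $=$ injective over $R$ and the resulting splittings back to the indecomposable module $M$, the non-projectivity of $L$, $N$ and of the later filtration layers (which the paper dismisses as obvious), and you apply Proposition \ref{complexity middle} directly to $0\to L\to M\to N\to 0$ instead of first passing to the syzygy sequence via Proposition \ref{syzygy behavior} as the paper does.
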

 \begin{proof}  It is obvious that neither $L$ nor $N$ are projective, so by 
\ref{filtration by linear} and \ref{syzygy behavior} we have an induced short exact sequence 
$0\rightarrow\Omega L\rightarrow\Omega M\rightarrow\Omega N\rightarrow 0$ and it is also 
clear that the submodule of $\Omega M$ generated in degree $i_0+1$ is isomorphic to $\Omega L$. 
So  both $L$ and $N$  have also complexity one by \ref{complexity middle}.
The corollary follows now from \ref{filtration by linear} and induction on the number of 
degrees of generation for $M$.
 \end{proof}

\noindent Note that the category of graded modules over the exterior algebra is closed 
under tensor products, where the tensor product is taken over the ground field, and is defined as follows: 
For two finitely generated graded $R$-modules $M$ and $N$, we define 
$(M\otimes N)_k=\bigoplus_i (M_i\otimes N_{k-i})$ where the $R$-module structure is induced 
by 
$$(m\otimes n)\xi=(-1)^i m\xi \otimes n + m\otimes n\xi $$
 for each $m\in M$, $n\in N_i$, 
and for each homogeneous element $\xi$ of degree one   in $R$. Moreover, if 
$0\rightarrow A\rightarrow B\rightarrow C\rightarrow 0$ is a short exact sequence of graded $R$-module, 
then for every graded $R$-module $M$ we have an induced sequence of graded $R$-modules 
$0\rightarrow M\otimes A\rightarrow M\otimes B\rightarrow M\otimes C\rightarrow 0$. 
Following \cite{BCR} we say that a thick subcategory $\mathcal T$ has {\it ideal closure}, 
if for each $M\in\mathcal T$ and $X\in\underline\md R$, we have that $M\otimes X\in\mathcal T$. 
Let $S$ denote the unique simple $R$-module concentrated in degree $0$, so that 
$S(-i)$ is the unique simple module concentrated in degree $i$. Then it is easy to see 
that $M\otimes S(-i) \cong M(-i)$. Let $\mathcal T_M$ denote the thick subcategory 
generated by $M$. We have the following:

\begin{prop} Let $R$ be the exterior algebra in $n+1$ indeterminates, and let $M$ be an indecomposable 
graded module of complexity 1. Then the thick subcategory $\mathcal T_M$ generated by $M$, 
contains the \AR components containing all the graded shifts of $M$. 
In particular, $\mathcal T_M$ has ideal closure. 
\end{prop}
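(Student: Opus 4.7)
The plan is to first show every graded shift of $M$ lies in $\mathcal T_M$, then to climb through the \AR component of $M$ step by step using AR triangles, and finally to deduce ideal closure by a Loewy-length induction. By Eisenbud's Theorem \ref{Eis}(a), $\Omega^{-1}M\cong M(1)$, so closure of $\mathcal T_M$ under the suspension $\Omega^{-1}$ yields $M(k)\in\mathcal T_M$ for every $k\in\mathbb Z$. The identification $\tau N=N(n-1)$ valid for any indecomposable of complexity one (established in the lemma following Proposition \ref{background}) then gives $\tau M\in\mathcal T_M$. The AR triangle
$$\tau M\longrightarrow E\longrightarrow M\longrightarrow\Omega^{-1}\tau M$$
thus has two of its three terms in $\mathcal T_M$, so $E$ and each of its indecomposable summands belong to $\mathcal T_M$. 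Rotating to the distinguished triangle $\Omega M\to\tau M\to E\to M$ and applying part (2) of the same lemma with $\cx\Omega M=\cx\tau M=1$ forces $\cx E\le 1$, so each nonzero indecomposable summand of $E$ is non-projective of complexity one.

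The propagation through the \AR component is by induction on the distance to $M$ in the quiver. Suppose $N$ is an indecomposable of complexity one in the AR component of $M$ that already lies in $\mathcal T_M$. Applying Eisenbud's theorem to $N$ yields $N(k)\in\mathcal T_M$ for every $k$, hence both $\tau N=N(n-1)$ and $\tau^{-1}N$ lie in $\mathcal T_M$. The AR triangles ending at $N$ and at $\tau^{-1}N$ therefore place every immediate AR-neighbor of $N$ in $\mathcal T_M$, and the rotation-plus-lemma argument used above again forces these neighbors to have complexity one. By connectivity of the AR component, the entire component lies in $\mathcal T_M$; running the same argument with $M(k)$ in place of $M$ covers every AR component containing a graded shift of $M$, which may well be distinct from that of $M$ when $n-1$ does not divide $k$. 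The delicate point is that membership in $\mathcal T_M$ and complexity one must be maintained \emph{simultaneously} at each new vertex of the AR quiver, for otherwise the identification $\tau N=N(n-1)$, on which the whole propagation rests, would break at the next step.

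For the ideal closure statement, I would induct on the Loewy length of $X\in\md R$. Since $M\otimes S(-i)\cong M(-i)\in\mathcal T_M$ and $\mathcal T_M$ is closed under finite direct sums, the semisimple case is clear. For the inductive step, tensoring the short exact sequence $0\to JX\to X\to X/JX\to 0$ with $M$ yields another short exact sequence (exactness of $M\otimes-$ was recorded in the paragraph preceding the proposition), which in turn gives a distinguished triangle in $\underline\md R$. The two outer terms lie in $\mathcal T_M$ by the semisimple base case and by the inductive hypothesis, so $M\otimes X\in\mathcal T_M$ by closure under triangles.
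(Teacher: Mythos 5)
Your proof is correct and follows essentially the same route as the paper: Eisenbud's theorem gives all graded shifts of $M$ in $\mathcal T_M$, the identification $\tau M = M(n-1)$ together with AR triangles yields the \AR components, and exactness of $M\otimes -$ plus an induction reduces ideal closure to the semisimple case. The only cosmetic differences are that you spell out the vertex-by-vertex induction through the AR component (which the paper leaves implicit) and you induct on Loewy length via the radical filtration rather than on the number of degrees via the degree filtration; both variants are fine.
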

\begin{proof} As we have seen above, if $M$ is a finitely generated indecomposable
graded module of complexity 1, then $\Omega^{-1}M\cong\ M(1)$ and $M(-1)\cong\Omega M$, 
so for each integer $i$, the graded shifts $M(i)\in\mathcal T_M$. 
Moreover, since for such a module $\tau M\cong M(n-1)$, it follows 
that  $\mathcal T_M$ contains also the \AR component containing $M$ and all its graded shifts. 
To show that $\mathcal T_M$ has ideal closure we proceed by induction on the graded length using
as a starting point the fact that if $L$ is a semisimple module generated in a single degree, 
then $M\otimes L$ is isomorphic to a direct sum of graded shifts of $M$. To be more 
specific let $L=L_{i_0}\oplus\cdots\oplus L_{i_0+t}$ be a graded $R$-module and 
consider the exact sequence $0\rightarrow L_{\ge i_0+1}\rightarrow L\rightarrow L_{ i_0}\rightarrow 0$.
We have an induced exact sequence of $R$-modules 
$0\rightarrow M\otimes L_{\ge i_0+1}\rightarrow M\otimes L\rightarrow M\otimes L_{ i_0}\rightarrow 0$. 
By induction, $M\otimes L_{ i_0}$ and $M\otimes L_{\ge i_0+1}$ are in 
$\mathcal T_M$ so $M\otimes L\in\mathcal T_M$ too.
\end{proof}

 \noindent Let $\mathcal T$ be a thick subcategory 
of $\underline\md R$ containing an indecomposable  module $M$ of complexity 1, where $R$ is the exterior algebra in $n+1$ indeterminates. 
For $n=1$ the module $M$ lies 
in a homogeneous tube, and if $n>1$ then $M$ lies in a ${\mathbb Z}A_\infty$ component by \cite[Theorem 5.7]{MVZ1}. 
So  we may always assume that $M$ is quasi-simple.

\section{Thick subcategories generated by modules of complexity one}

\noindent Let $R=R(V)$ denote the exterior algebra in $n+1$ indeterminates $x_0, x_1,\cdots, x_n$.  
Following \cite{E}, the complexity one modules of smallest length are cyclic of dimension $2^n$, 
linear and, up to degree shift, of the form $R/{\langle\xi\rangle}$ with $\xi \in {\bf P}(V)$, \\

\noindent Let $M=M_{\xi}$ be such a module. We describe first the thick subcategory $\mathcal T_{M}$ of 
$\underline\md R$ generated by $M$. It is clear that $\Omega M(1)=M$ and $\tau M=M(n-1)$. 
We start by collecting some facts about these modules. 

\begin{lemma}\label{Hom lemma} Let $\xi\neq\eta$ be two points in ${\bf P}(V)$. Then:
\begin{enumerate}
\item[(i)]  $\Hom_R(M_{\xi},M_{\eta})=0$ and
 $\Hom_R(M_{\xi},M_{\xi})=\underline\Hom_R(M_{\xi},M_{\xi})$ is one dimensional.
\item[(ii)]   $\underline\Hom_R(M_{\xi},M_{\eta}(i))=0$ for each integer $i$. 
Equivalently,  $\Ext_R^k(M_{\xi},M_{\eta}(i))=0$ for each integer $i$,  and for each non-negative integer $k$.
\item[(iii)] $\underline\Hom_R(M_{\xi},M_{\xi}(i))$ has dimension $\binom{n}{i}$ if $0\le i\le n$
and is zero otherwise.
\item[(iv)] $\Hom_R(M_{\xi}(j),M_{\eta})=0=\Hom_R(M_{\xi}(j),M_{\xi})$ for all $j>0$ and for all \\
$j<-n$.
\item[(v)]  $\Hom_R(\Omega^iM_{\xi},M_{\xi}) = \underline{\Hom}_R(\Omega^iM_{\xi},M_{\xi})$ for all $i\ge 0$.
\end{enumerate} 
\end{lemma}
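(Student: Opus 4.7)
The plan is to use the explicit $2$-periodic minimal graded free resolution
$$\cdots\to R(-2)\xrightarrow{\,\xi\,}R(-1)\xrightarrow{\,\xi\,}R\to M_\xi\to 0,$$
whose existence follows from Theorem \ref{Eis}(a) together with the identity $\ann_R(\xi)=\xi R$, as the uniform engine for computing $\Ext^*_R(M_\xi,-)$ and hence $\underline\Hom_R$. Applying $\Hom_R(-,N)$ turns this resolution into a cochain complex whose differential is multiplication by $\xi$ on $N$; and because Theorem \ref{Eis} yields the graded isomorphism $\Omega M_\xi\cong M_\xi(-1)$, shifting both arguments gives the bridge identity
$$\underline\Hom_R(M_\xi,N(j))\;\cong\;\Ext^k_R(M_\xi,N(j-k))\quad\text{for every }k\ge 1,$$
which translates every stable-Hom statement into the cohomology of $\xi\cdot$ on $N$.

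For (ii), I choose a basis of $V$ with $\xi=x_0$ and $\eta=x_1$, so that $M_\eta=R/\langle x_1\rangle$ identifies with the exterior subalgebra $R':=R(x_0,x_2,\ldots,x_n)$ (on which $x_1$ acts as zero) and the differential becomes multiplication by $x_0$ on $R'$. Writing $R'=R''\oplus x_0R''$ for $R'':=R(x_2,\ldots,x_n)$, both the kernel and the image of $x_0\cdot$ in each degree equal $x_0R''$, so the cohomology vanishes at every position $k\ge 1$; via the bridge identity this yields $\underline\Hom_R(M_\xi,M_\eta(i))=0$ for all $i$. The first half of (i) is the cohomology at position $0$ with $i=0$, where $x_0$ is injective on $R'_0=\Bbbk$. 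For the second half of (i), $\xi$ kills $M_\xi$ so $\dim\Hom_R(M_\xi,M_\xi)=\dim(M_\xi)_0=1$, and the identity cannot factor through a free module because $M_\xi$ is non-projective.

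For (iii) I apply the same resolution with $N=M_\xi(i)$; the differential multiplication by $\xi$ on $M_\xi$ is identically zero, so $\Ext^k_R(M_\xi,M_\xi(i))\cong(M_\xi)_{i+k}$, and the bridge identity specialises to $\underline\Hom_R(M_\xi,M_\xi(i))\cong(M_\xi)_i$ of dimension $\binom{n}{i}$ for $0\le i\le n$ and zero otherwise. Part (iv) is pure degree bookkeeping: via $\Hom_R(M_\xi(j),N)=\Hom_R(M_\xi,N(-j))$, such a map is determined by the image of the degree-$0$ generator in $N_{-j}$, which vanishes whenever $-j\notin[0,n]$. For (v), the graded isomorphism $\Omega^iM_\xi\cong M_\xi(-i)$ identifies $\Hom_R(\Omega^iM_\xi,M_\xi)$ with $\Hom_R(M_\xi,M_\xi(i))$; a direct calculation (using that $\xi$ kills $M_\xi$) gives dimension $\binom{n}{i}$, matching $\dim\underline\Hom_R(\Omega^iM_\xi,M_\xi)$ from (iii). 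Since $\underline\Hom$ is a quotient of $\Hom$, equality of dimensions forces $\mathcal P(\Omega^iM_\xi,M_\xi)=0$.

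The main obstacle is the cohomology vanishing in (ii); everything else is degree counting, a dimension comparison, or a direct application of the same resolution. The change-of-basis trick identifying $M_\eta$ with the smaller exterior algebra $R'$ is the crucial ingredient, reducing the Ext computation to the transparent algebra of multiplication by a single generator $x_0$.
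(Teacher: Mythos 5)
Your proof is correct, and it takes a genuinely different route from the paper's. The paper works at the level of individual maps: for (ii) it writes down an arbitrary graded map $f\colon M_{\xi}\to M_{\eta}(i)$, notes that $f(e_1)=e_2bx_0$, and explicitly factors $f$ through the injective envelope $R(1)$ of $M_{\xi}$; for (iii) it observes that no nonzero map $M_{\xi}\to M_{\xi}(i)$ can factor through $R(1)$ (since $x_0$ annihilates every shift of $M_{\xi}$) and counts $\Hom$ directly; (iv) and (v) are declared immediate. You instead run everything through the periodic minimal free resolution $\cdots\to R(-1)\xrightarrow{\;\xi\;}R\to M_{\xi}\to 0$ (valid because $\ann_R(\xi)=\xi R$), so that $\Ext^k_R(M_{\xi},N)$ becomes the cohomology of multiplication by $\xi$ on $N$, and you transfer stable Homs to $\Ext^1$ via the graded periodicity $\Omega M_{\xi}\cong M_{\xi}(-1)$. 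The key vanishing in (ii) then reduces to the exactness of $x_0\cdot$ on $R'=R(x_0,x_2,\ldots,x_n)\cong M_{\eta}$ (kernel $=$ image $=x_0R''$ with $R''=R(x_2,\ldots,x_n)$), which treats all shifts $i$ and all $k\ge 1$ uniformly, with no case split, and in fact delivers the $\Ext^{k\ge 1}$-vanishing asserted in the second sentence of (ii) directly rather than only the stable-Hom statement; your dimension comparison in (v), where $\dim\Hom=\dim\underline{\Hom}=\binom{n}{i}$ forces $\mathcal P(\Omega^iM_{\xi},M_{\xi})=0$, is a clean substitute for the paper's ``follows immediately.'' What the paper's argument buys is an explicit picture of which maps factor through projectives; what yours buys is uniformity and the full $\Ext$ computation in one stroke. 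One caveat concerns the statement rather than your proof: the clause ``$\Ext^k=0$ for each non-negative $k$'' cannot be read literally at $k=0$ for all shifts, since your own kernel computation (and the maps $e_1\mapsto e_2x_0$ appearing in the paper's proof of (ii)) give $\Hom_R(M_{\xi},M_{\eta}(i))\neq 0$ for $1\le i\le n$, all such maps being stably trivial; your proof establishes exactly the correct assertions, namely $k\ge 1$ for every $i$, together with the $k=0$ case at $i=0$ from part (i).
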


\begin{proof}

\noindent (i) If there was a nonzero homomorphism $f\colon M_{\xi}\rightarrow M_{\eta}$, then $f$ would 
take the degree zero part of $M_{\xi}$ onto the degree zero part of $M_{\eta}$ 
hence $f$ would be onto and a dimension argument would imply that $f$ would have to
 be an isomorphism. Consequently $M_{\xi}$ and $M_{\eta}$ will have the same annihilator 
in $R$ contradicting the fact that $\xi\neq\eta$.\\
Since $M_{\xi}$ has simple top, the graded Hom space $\Hom_R(M_{\xi},M_{\xi})$ is one dimensional.
It is obvious 
that no nonzero degree zero homomorphism from $M_{\xi}$ to $M_{\xi}$ can factor through 
a free module so we obtain immediately that $\Hom_R(M_{\xi},M_{\xi})=\underline\Hom_R(M_{\xi},M_{\xi})$. 

\medskip
\noindent (ii) By making a change of basis if necessary we may assume that
 $R$ is 
the exterior algebra in $x_0, x_1,\cdots,  x_n$ with $x_0\in\xi$ and $x_n\in\eta$. The result is obvious if 
either $i\le 0$ or $i\ge n$, so assume that $0<i\le n-1$. Let $e_1$ span the degree 
zero part of $M_{\xi}$ and let $e_2$ span the degree $-i$ part of $M_{\eta}(i)$. 
A graded homomorphism $f\colon M_{\xi}\rightarrow M_{\eta}(i)$  is given by $f(e_1)=e_2a$
where $a$ is a homogeneous element of degree $i$ in $x_0, x_1,\cdots, x_{n-1}$, but 
since $e_1$ is annihilated by $x_0$ it turns out that $a=bx_0$ where $b$ is homogenous 
of degree $i-1$ in $x_1,\cdots, x_{n-1}$. 
Let $\epsilon\colon M_{\xi}\rightarrow R(1)$ 
be the inclusion of $M_{\xi}$ into its injective envelope. Then $\epsilon (e_1)= e_3x_0$
where $e_3$ spans the degree $-1$ part of $R(1)$.  We show that $f$ factors through $\epsilon$. 
A typical graded homomorphism $h\colon R(1)\rightarrow M_{\eta}(i)$ has the 
form $h(e_3)=e_2c$ where $c$ is a homogenous element of degree $i-1$ in the 
indeterminates $x_0, x_1,\cdots, x_{n-1}$. So $h\epsilon (e_1)= e_2c'$ where $c'$ is 
a homogenous element of degree $i$ containing $x_0$ as a factor. It is clear 
that $c$ can always be chosen so that $h\epsilon=f$ and the proof is complete.

\medskip

\noindent (iii) The result is trivial if $i$ is negative or $i>n$, so 
assume $0\le i\le n$. Again we may assume without loss of generality $x_0\in \xi$.
 Let $e_1$ span 
the degree zero part of $M_{\xi}$ and let $e_2$ span the degree $-i$ part of $M_{\xi}(i)$. 
A degree zero embedding $\epsilon$ from $M_{\xi}$ into its injective envelope $R(1)$ is 
given by $\epsilon(e_1)= e_3x_0$ where $e_3$ spans the degree $-1$ part  of $R(1)$, and a 
homomorphism $h\colon R(1)\rightarrow M_{\xi}(i)$ is given by $h(e_3)=e_2a$ where $a$ is 
a homogenous element of $R$ of degree $i-1$ in $x_1,\cdots, x_n$. Since $x_0$ 
annihilates $M_{\xi}$ and all its graded shifts we get that 
$\Hom_R(M_{\xi},M_{\xi}(i))=\underline\Hom_R(M_{\xi},M_{\xi}(i))$ and the rest follows immediately.

\medskip
\noindent (iv,v) These also follow immediately.
\end{proof}

\noindent As a consequence of part (i) of the above lemma we see that 
$\Ext^1_R(M_{\xi}(1),M_{\xi})$ is one dimensional since it is isomorphic 
to $\underline\Hom_R(M_{\xi},M_{\xi})$. In fact up to isomorphism we have
a nonsplit exact sequence $0\rightarrow M_{\xi}\rightarrow R(1)\rightarrow M_{\xi}(1)\rightarrow 0$ 
spanning this Ext space. Part (iii) of the lemma implies that $\Ext_R^1(M_{\xi},M_{\xi}(n-1))$ 
is one dimensional, and then up to isomorphism we have a nonsplit short exact sequence 
of graded $R$-modules $0\rightarrow M_{\xi}(n-1)\rightarrow X_{\xi}\rightarrow M_{\xi}\rightarrow 0$, which is an Auslander-Reiten sequence.  
It is easy to see that the module $X_{\xi}$ is indecomposable since by \cite{E} there are no indecomposable modules of dimension smaller than the dimension of $M$ in this Auslander-Reiten component.

\medskip
\noindent Since for all graded $R$-modules $X,Y$ and for each $t\ge 1$ we have isomorphisms 
$\Ext^t_R(X,Y) \cong \underline\Hom_R(\Omega^tX,Y) \cong \underline\Hom_R(X,\Omega^{-t}Y)$,
we get from part (iii) of Lemma \ref{Hom lemma}:
\begin{cor}\label{Ext cor} Let $R= R(V)$ and let $\xi \in {\bf P}(V)$. Then
\begin{enumerate}
\item[(i)] $\Ext^1_R(M_\xi,M_\xi(i))\ne 0$  if and only if $0\le i+1\le n$. 
\item[(ii)] $\Ext^1_R(M_{\xi},M_{\xi})$ is $n$-dimensional. 
$\Ext^2_R(M_\xi,M_\xi(i))\ne 0$  if and only if $0\le i+2\le n$.
\item[(iii)] $\Ext^1_R(M_\xi,M_\xi(i))\ne 0$ and $\Ext^2_R(M_\xi,M_\xi(i))=0$ if and only if $i=n-1$. \qed
\end{enumerate} 
\end{cor}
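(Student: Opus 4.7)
The plan is to reduce every statement to part (iii) of Lemma \ref{Hom lemma} via the dimension-shift isomorphism recorded just above the corollary. Since $M_\xi$ is indecomposable, non-projective, and of complexity one, Eisenbud's theorem (Theorem \ref{Eis}(a)) gives $\Omega M_\xi \cong M_\xi(1)$, hence $\Omega^{-1} M_\xi \cong M_\xi(-1)$ in the stable category. Iterating and incorporating the degree shift yields, for all $t \geq 0$ and all $i \in \mathbb{Z}$, the isomorphism $\Omega^{-t} M_\xi(i) \cong M_\xi(i+t)$. Plugging this into $\Ext^t_R(X,Y) \cong \underline{\Hom}_R(X, \Omega^{-t}Y)$ produces the master identity
$$\Ext^t_R(M_\xi, M_\xi(i)) \;\cong\; \underline{\Hom}_R(M_\xi, M_\xi(i+t)),$$
which by Lemma \ref{Hom lemma}(iii) has dimension $\binom{n}{i+t}$ when $0 \le i+t \le n$ and is $0$ otherwise.

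From here every claim is an immediate specialization. For (i), take $t=1$: the space is nonzero exactly when $0 \le i+1 \le n$. For (ii), first take $t=1$, $i=0$ to obtain dimension $\binom{n}{1}=n$; then take $t=2$ to see that $\Ext^2_R(M_\xi, M_\xi(i)) \neq 0$ iff $0 \le i+2 \le n$. For (iii), combine (i) and (ii): non-vanishing of $\Ext^1$ forces $-1 \le i \le n-1$, while vanishing of $\Ext^2$ requires $i+2 \notin [0,n]$, i.e.\ $i > n-2$ (the alternative $i < -2$ is incompatible with $i \ge -1$). The intersection of these conditions is the single value $i = n-1$.

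The only step requiring any care is the stable isomorphism $\Omega^{-t}M_\xi(i)\cong M_\xi(i+t)$, and even this is immediate from Theorem \ref{Eis}(a). Otherwise the proof is pure bookkeeping on the binomial coefficient $\binom{n}{i+t}$, so I anticipate no real obstacle.
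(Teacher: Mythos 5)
Your proposal is correct and follows exactly the paper's route: the paper derives the corollary from the isomorphisms $\Ext^t_R(X,Y)\cong\underline\Hom_R(X,\Omega^{-t}Y)$ together with Eisenbud's periodicity and Lemma \ref{Hom lemma}(iii), which is precisely your ``master identity'' $\Ext^t_R(M_\xi,M_\xi(i))\cong\underline\Hom_R(M_\xi,M_\xi(i+t))$ of dimension $\binom{n}{i+t}$, followed by the same bookkeeping.

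One sign slip should be fixed: with the paper's convention $M(i)_j=M_{i+j}$, Theorem \ref{Eis}(a) ($M\cong\Omega M(1)$) gives $\Omega M_\xi\cong M_\xi(-1)$ and hence $\Omega^{-1}M_\xi\cong M_\xi(1)$, not the directions you wrote. Taken literally, your stated premise would yield $\Omega^{-t}M_\xi(i)\cong M_\xi(i-t)$ and dimensions $\binom{n}{i-t}$, which would contradict, e.g., the $n$-dimensionality of $\Ext^1_R(M_\xi,M_\xi)$. The master identity you actually use (and all the conclusions drawn from it) is the correct one and agrees with the paper's statement $\Omega^{-1}M\cong M(1)$, so only that one sentence needs correcting.
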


\medskip
\noindent   Putting together Theorem \ref{Eis} and part (ii) of \ref{Hom lemma}, we obtain:
\begin{cor} \label{oneM} Let $R = R(V)$ and let $X$ be an indecomposable finitely generated $R$-module of 
complexity one. Then there exists a unique $\xi \in {\bf P}(V)$, such that $X$ has a filtration
with subfactors $M_{\xi}(j_i)$, for suitable  $j_i\in \mathbb Z$. \qed
\end{cor}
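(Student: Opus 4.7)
The plan is to combine Theorem~\ref{Eis}(b) with the $\Ext$-vanishing in part~(ii) of Lemma~\ref{Hom lemma}. Theorem~\ref{Eis}(b) already endows $X$ with a filtration $0=X_0\subset X_1\subset\cdots\subset X_s=X$ whose subquotients are shifts $M_{\xi_i}(j_i)$. What remains is (a) to force all the $\xi_i$ to agree, using indecomposability of $X$, and (b) to check that the resulting $\xi\in{\bf P}(V)$ does not depend on the chosen filtration.

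For (a) I would establish the following auxiliary splitting by induction on the length $s$ of the filtration: any graded module $Y$ filtered by shifts of $M_{\eta_1},\ldots,M_{\eta_t}$ for pairwise distinct points $\eta_k\in{\bf P}(V)$ decomposes as $Y\cong Y_1\oplus\cdots\oplus Y_t$, each $Y_k$ itself filtered by shifts of $M_{\eta_k}$ alone. In the inductive step, write $0\to Y'\to Y\to M_\eta(j)\to 0$ with $Y'\cong Y_1'\oplus\cdots\oplus Y_t'$ already decomposed. The extension class lives in $\Ext^1_R(M_\eta(j),Y')=\bigoplus_k\Ext^1_R(M_\eta(j),Y_k')$, and the long exact sequence of $\Ext$ attached to a filtration of $Y_k'$, combined with Lemma~\ref{Hom lemma}(ii), forces $\Ext^1_R(M_\eta(j),Y_k')=0$ whenever $\eta_k\neq\eta$. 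So only the summand with $\eta_k=\eta$ contributes, and a pushout decomposition yields the claim. Applied to $X$, indecomposability forces $t=1$, yielding a single $\xi$.

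For (b) I would characterise $\xi$ intrinsically. The top factor of the filtration gives a surjection $X\twoheadrightarrow M_\xi(j_s)$, hence $\Hom_R(X,M_\xi(j_s))\neq 0$. Conversely, if $\eta\neq\xi$, induction on the filtration length using the long exact sequence of $\Hom_R(-,M_\eta(k))$---with vanishing on each factor $M_\xi(j_i)$ supplied by Lemma~\ref{Hom lemma}(ii)---shows $\Hom_R(X,M_\eta(k))=0$ for every $k\in\mathbb Z$. Thus $\xi$ is uniquely characterised as the point with $\Hom_R(X,M_\xi(k))\neq 0$ for some $k$.

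The only subtlety is the propagation of the Ext-vanishing through iterated extensions; but since Lemma~\ref{Hom lemma}(ii) supplies $\Ext^k$-vanishing for every $k\ge 0$ (including $k=0$), the relevant long exact sequences reduce to a simple dévissage and both halves of the argument become routine.
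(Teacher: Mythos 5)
Your part (a) is precisely the d\'evissage the paper intends when it deduces Corollary \ref{oneM} by ``putting together'' Theorem \ref{Eis} and Lemma \ref{Hom lemma}(ii): the Ext-vanishing between modules attached to distinct points splits off any mixed filtration into a direct sum indexed by the points, and indecomposability of $X$ leaves a single $\xi$. That half of your argument is correct and is the essential content of the corollary.

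The gap is in your part (b), where you invoke vanishing of \emph{ordinary} Hom on the filtration factors. In fact $\Hom_R(M_{\xi},M_{\eta}(i))$ does \emph{not} vanish for $1\le i\le n$ when $\eta\neq\xi$: already for $R=R(x_0,x_1)$ with $x_0\in\xi$, $x_1\in\eta$, sending the generator $e$ of $M_{\xi}$ to $fx_0\in M_{\eta}(1)$ defines a nonzero degree-zero homomorphism. What the proof of Lemma \ref{Hom lemma}(ii) actually establishes is that every such map factors through $R(1)$, i.e.\ the vanishing of $\underline\Hom_R(M_{\xi},M_{\eta}(i))$ for all $i$, equivalently of $\Ext^k_R$ for $k\ge 1$; part (iv) of that lemma, which restricts the ordinary Hom-vanishing to $j>0$ or $j<-n$, tacitly concedes that ordinary Hom survives at the intermediate shifts. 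So your proposed characterization of $\xi$ fails already for $X=M_{\xi}$, since $\Hom_R(M_{\xi},M_{\eta}(1))\neq 0$. The repair is straightforward: run the same d\'evissage in the stable category. If $X$ is filtered by shifts of $M_{\xi}$, the long exact sequences coming from the associated triangles give $\underline\Hom_R(X,M_{\eta}(k))=0$ for all $k$ whenever $\eta\neq\xi$, while $\underline\Hom_R(X,M_{\xi}(k))\neq 0$ for some $k$, because otherwise a second d\'evissage would yield $\underline\Hom_R(X,X)=0$, forcing $X$ to be projective and contradicting $\cx X=1$ (equivalently: two filtrations at distinct points would give $\underline\Hom_R(X,X)=0$ directly). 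With ordinary Hom replaced by stable Hom in this way, your uniqueness argument closes and the whole proof matches the paper's intended one.
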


\medskip
\noindent  Having this result in mind, Corollary \ref{cx1-filtr} then implies that for the study of
indecomposable modules of complexity one, we should consider linear modules $X$ of complexity
one, which have a filtration $0=X_0 \subset X_1 \subset \cdots \subset X_t = X$ with  
$X_i/X_{i-1} \cong M_{\xi}$ for a fixed $\xi \in {\bf P}(V)$. Let $M=M_{\xi}$ and $\cF (M) $ be the full subcategory of 
$\md R$
consisting of the modules having a filtration with subfactors isomorphic to $M$. Since $M$ 
has trivial endomorphism ring, by \cite{R1} the category $\cF (M)$ is a full exact and extension closed  abelian subcategory of  $\md R$
and $M$ is its unique (up to isomorphism) simple object. As in the classical case, every semi-simple object in $\mathcal F(M)$ is isomorphic to a direct sum of copies of $M$, any two different simple subobjects of a module in $\mathcal F(M)$ have zero intersection. Each object in $\cF (M)$ has a finite composition series,
hence $\cF (M)$ is a length category.

\medskip
\noindent Following Gabriel \cite[Sect.7]{G1} one should
then consider the Ext-quiver $\mathcal Q$ of this category, since it already gives a lot of informations
on $\mathcal F(M )$: 
 The quiver $\mathcal Q$ has one vertex and  $n = \dim_{\Bbbk}\Ext_R^1(M,M)$ loops.
 As usual we will identify the category of finite dimensional $\Bbbk$-linear representations
of $\mathcal Q$ with the category $\md\Bbbk\mathcal Q$.  Since $\mathcal Q$ has $n$ loops, 
the algebra $\Bbbk\mathcal Q \cong \Bbbk\langle t_1,\ldots ,t_n\rangle $-the free algebra in $t_1, \ldots , t_n$.
\smallskip

\noindent By Gabriel (\cite{G1}) any length-category is equivalent to
the category of finite dimensional modules over some pseudo-compact ring $A$ (see \cite{G2}). We will show that in our situation $A = \Bbbk[[t_1,\cdots,t_n]]$.

\smallskip
\noindent Obviously all the nonzero objects in $\cF (M)$ are linear of complexity 
one. 
Since $\mathcal F(M)$ is a length category, for  each nonzero object $X\in\mathcal F(M)$ we may consider its radical,
$\rad_{\mathcal F} X$- the intersection of all maximal subobjects of $X$ in $\mathcal F(M)$. Let us show that $\rad_{\mathcal F} X\in\mathcal F(M)$. Being an abelian category, $\mathcal F(M)$ is closed under finite intersections of subobjects. Let $(Y_j)_{j\in J}$ be a family of subobjects of some $Y\in \cF (M)$. Since $Y$ has finite $\cF (M)$-length, there exists a finite subfamily
$\{Y_1,\ldots ,Y_t\}$ such that $C =\cap_{1\leq i \leq t}Y_i \subset Y_j$ for all $j\in J$ that is, $C = \cap_{j\in J}Y_j$, so $\cF (M)$ is closed under arbitrary intersections as well.
\medskip

\noindent  Letting
$\rad_{\mathcal F}^0X=X$ and $\rad_{\mathcal F}^{i+1}X = \rad_{\mathcal F}(\rad_{\mathcal F}^iX)$, we obtain the ``Loewy series"
$$X\supset\rad_{\mathcal F} X\supset\rad_{\mathcal F}^2X\supset\cdots\supset\rad_{\mathcal F} ^{r-1}X\supset\rad_{\mathcal F} ^rX =0.$$
The smallest $r$ with the property that $\rad_{\mathcal F} ^rX=0$ is called the
{\em Loewy length} of $X$, written $\ell\ell(X)$. The factor $X/{\rad_{\mathcal F} X}$ is called the
$\cF$-{\em top} of $X$. If  $\cF$-$\tp X$ is simple, then $X$ is  called $\cF$-local or simply local.
All the factors $\rad_{\mathcal F}^iX/{\rad_{\mathcal F} ^{i+1}X}$ are semisimple
in the category $\mathcal F(M)$. The largest semi-simple subobject of $X$ in $\cF (M)$ is called 
the $\cF$-socle of $X$.

\smallskip
\noindent Our first aim is to give a precise description of this category $\cF (M)$. 

\noindent For each $i\in\mathbb N$ define the full subcategories $\mathcal F^{(i)} =
\{X\in\mathcal F(M)|\ell\ell(X)\leq i\}.$  We have an ascending chain of subcategories
$$0=\mathcal F^{(0)}\subset \mathcal F^{(1)} \subset \mathcal F^{(2)}\subset\cdots,$$
\noindent and $\mathcal F(M)= \bigcup_{i\in\mathbb N}\mathcal F^{(i)}$.  The inclusion functors $\mathcal F^{(i)}\to\mathcal F^{(i+1)}$ are right adjoint to the truncation functors
$\mathcal F^{(i+1)}\to\mathcal F^{(i)}$, defined by  $Y\mapsto Y/{\rad_{\mathcal F} ^{i}}Y$.
The categories $\mathcal F^{(i)}$
are exact abelian subcategories of $\mathcal F(M)$, but they are not closed under extensions.
 For each $i$, let $\mathcal L^{(i)}$
 be the set of local objects in $\mathcal F^{(i)}$ whose tops equal the unique 
 simple object $M$. We claim that the length of each object in $\mathcal L^{(i)}$
is bounded by $(n^i-1)/(n-1)$.

\smallskip 
\noindent The claim is clear for $i=1$.  For $i>1$ and $X\in \mathcal L^{(i)}$, then as $\dm_{\Bbbk}\Ext_R^1 (M,M) =n$,
we have that $\rad_{\cF}X/\rad^2_{\cF}X \cong M^r$, with $r\leq n$, so $\rad_{\cF}X$ is an epimorphic image of a sum
of $r$ local modules $L_1,\ldots , L_r$, all contained in $\rad_{\cF}X$.
By induction, since for each $j$, $\ell (L_j) \leq (n^{i-1}-1)/(n-1)$, we get $$\ell(X)\leq n(n^{i-1}-1)/(n-1)+1=(n^i-n)/(n-1)+1=(n^i-1)/(n-1)$$
which proves the claim. By \cite{K}, there exists in $\mathcal L^{(i)}$ a unique
object $P^{(i)}$ of maximal length with the properties that each object
in $\mathcal L^{(i)}$ is an epimorphic image of $P^{(i)}$,
and that for any $X \in\mathcal F^{(i)}$, each
epimorphism $X\to P^{(i)}$ splits. It is also easy to check that the object $P^{(i)}$ is projective in $\cF^{(i)}$ in the usual sense, meaning that, for each
epimorphism $X\to Y$ in $\mathcal F^{(i)}$, the induced map $\Hom_R (P^{(i)},X)\to\Hom_R (P^{(i)},Y)$ is
surjective.  
\noindent
The object $P^{(i)}$ is then the minimal projective generator in the category $\cF^{(i)}$,
therefore by Morita theory the functor $\Hom_R(P^{(i)},-)$ induces an equivalence 
$\mathcal F^{(i)}\cong\md\End_R( P^{(i)})$.

\begin{lemma} For all $i\in\mathbb{N}$ we have
 $P^{(i)}/\rad_{\mathcal F}^{\ i-1}P^{(i)} \cong P^{(i-1)}$.
\end{lemma}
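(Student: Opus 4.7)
The strategy is to show that $Q := P^{(i)}/\rad_{\mathcal F}^{i-1}P^{(i)}$ satisfies the two defining properties of $P^{(i-1)}$ in $\mathcal L^{(i-1)}$, and conclude by the uniqueness statement recalled just before the lemma.

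First I would verify that $Q \in \mathcal L^{(i-1)}$. Being a quotient of the local object $P^{(i)}$, the module $Q$ is itself local with $\cF$-top $M$; and since $\rad_{\mathcal F}^{i-1}Q = \rad_{\mathcal F}^{i-1}P^{(i)}/\rad_{\mathcal F}^{i-1}P^{(i)} = 0$, its Loewy length is at most $i-1$, so $Q \in \mathcal F^{(i-1)}$.

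Next I would show that every $X \in \mathcal L^{(i-1)}$ is an epimorphic image of $Q$. Since $\mathcal L^{(i-1)} \subseteq \mathcal L^{(i)}$, the defining property of $P^{(i)}$ yields an epimorphism $\pi \colon P^{(i)} \to X$. The key input is the functoriality of $\rad_{\mathcal F}$: for any morphism $f \colon A \to B$ in $\mathcal F(M)$ one has $f(\rad_{\mathcal F}^k A) \subseteq \rad_{\mathcal F}^k B$, which follows inductively from the fact that the preimage of a maximal $\cF$-subobject of $B$ is either all of $A$ or a maximal $\cF$-subobject of $A$ (here one uses that $M$ is the unique simple object of $\mathcal F(M)$, so that a proper preimage automatically has simple quotient). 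Combined with $\rad_{\mathcal F}^{i-1}X = 0$, this forces $\pi(\rad_{\mathcal F}^{i-1}P^{(i)}) = 0$, so $\pi$ factors through $Q$, producing the desired epimorphism $Q \twoheadrightarrow X$.

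Finally, applying this to $X = P^{(i-1)}$ gives an epimorphism $Q \twoheadrightarrow P^{(i-1)}$. By the splitting property of $P^{(i-1)}$ in $\mathcal F^{(i-1)}$ recalled in the text, this epimorphism splits, so $P^{(i-1)}$ is a direct summand of $Q$; since $Q$ is local and therefore indecomposable, we conclude $Q \cong P^{(i-1)}$. The only delicate ingredient in the whole argument is the functoriality of $\rad_{\mathcal F}$; everything else is a direct application of the definitions and the characterising properties of $P^{(i)}$ and $P^{(i-1)}$ established above.
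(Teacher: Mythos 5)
Your proof is correct and takes essentially the same route as the paper: both arguments show that $Q=P^{(i)}/\rad_{\mathcal F}^{\,i-1}P^{(i)}$ lies in $\mathcal L^{(i-1)}$ and that every object of $\mathcal L^{(i-1)}$ is a quotient of $Q$ (since $P^{(i)}$ generates $\mathcal L^{(i)}\supseteq\mathcal L^{(i-1)}$ and the radical powers are preserved by morphisms). The only difference is the final step, where the paper invokes the maximality of the composition length of $P^{(i-1)}$ in $\mathcal L^{(i-1)}$ while you use the splitting property of $P^{(i-1)}$ together with the indecomposability of the local object $Q$; both yield the isomorphism immediately.
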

\begin{proof} Since $P^{(i)}$ generates all elements in
$\mathcal L^{(i)}$, it generates also all
the elements in $\mathcal L^{(i-1)}$. So every object in $\mathcal L^{(i-1)}$
is a quotient of $P^{(i)}/{\rad_{\mathcal F}^{i-1}P^{(i)}}$, and, since $P^{(i-1)}$ has maximal composition
length in $\mathcal L^{(i-1)}$, the result follows.
\end{proof}

\noindent This lemma will enable us, to construct the objects $P^{(i)}$ and their endomorphism rings
inductively. 
For $i=1$ the category $\mathcal F^{(1)}=\add{M}$ is semisimple,
so $P^{(1)} = M$, hence
$\End_RM = {\Bbbk}$. 
For the inductive step we use universal short exact sequences as follows:
Let $X\in \mathcal F^{(i)}$ be indecomposable with $\dm_{\Bbbk}\Ext_R^1(X,M)=a$.  Let $(\eta_{s})_{1\leq s\leq a}$ be
a basis of $\Ext_R^1(X,M)$, say $$\eta_{s}: 0\to M \to E_{s}\to X\to 0.$$ 
We have a pullback diagram
$$ 
\xymatrix
{ &0\ar[r]&M^{a}\ar@{=}[d]
\ar[r] &E\ar[d]\ar[r]
&X\ar^{\Delta}[d] \ar[r] &0\\ &0 \ar[r]&M^{a}\ar[r]
&\oplus_{s}E_{s}\ar[r] &X^{ a}\ar[r]& 0} 
$$ 
where $\Delta : X\to X^{ a}$ is the diagonal embedding. The top short exact sequence $0\to M^{a}\to E\to X \to 0$ is called the {\em universal short exact sequence} (\cite{Bo}) for $\Ext_R^1(X,\add M )$ and the connecting homomorphism 
$\Hom_R(M^{a},M)\to \Ext_R^1(X,M)$ is an isomorphism.
Note that $E$ is uniquely determined up to isomorphism.

\begin{Remark}
 \noindent   For each $k>1$, the $\cF$-top of $\rad_{\cF}P^{(k)}$ equals $M^n$. Let $e_i\in\rad_{\cF}P^{(k)}$ be such that $e_i+\rad^2_{\cF}P^{(k)}$ is a generator of the $i$-th direct  summand in $M^n$ above and let $e$ be the generator of the 
 $\cF$-top of $P^{(k)}$. For each $1\le i\le n$, let $\pi_i\colon P^{(k)}\to e_iR$ be the $\cF$-projective cover such that $\pi_i(e)=e_i$, and let $\epsilon_i$ denote the inclusions $e_iR\to P^{(k)}$. We will view each $t_i$ as the endomorphism $\epsilon_i\pi_i\colon P^{(k)}\rightarrow P^{(k)}$. Let now $g\colon P^{(k)}\to P^{(k)}$ be an endomorphism. Since $g(\rad_{\cF}P^{(k)})\subseteq\rad_{\cF}P^{(k)}$, $g$ induces a homomorphism
 $$\overline g\colon\cF{\mbox{\rm -top}}P^{(k)}\to\cF{\mbox{\rm -top}}P^{(k)}.$$ 
 If $\overline g\neq 0$, then $g$ is an automorphism, as $P^{(k)}$ is a local object in $\mathcal F(M)$. Otherwise, the image of $g$ must be contained in $\rad_{\cF}P^{(k)}=\sum_{i=1}^nt_i(P^{(k)})$.
\end{Remark}

\begin{Example}\label{P^2}  We give an explicit construction of $P^{(2)}$ that will be used shortly. For each $1\le s\le n$, consider the extensions $$\eta_s\colon 0\to M\to E_s\to M \to 0$$ where $M=M_{\xi}$ and $x_0\in\xi$. Then, as $R(x_1,\cdots,x_n)=R/{\langle x_0\rangle}$-modules, the modules $E_s$ and $M^2$ are isomorphic, and are each generated over $R$ by two generators: $e$, and $e_s$, satisfying $ex_0 = e_sx_s$, and   $e_sx_0 =0$. It is clear that the set $\{\eta_s\}_{1\le s\le n}$ forms a $\Bbbk$-basis for $\Ext^1_R(M,M)$. We now construct the universal
short exact sequence for $\Ext_R^1(M,\add M )$, and we get  $$0\to M^n \to P^{(2)}\to M \to 0$$ where $E=P^{(2)}$
has a minimal generating system $\{e,e_1,\ldots ,e_n\}$ in degree 0, subject to the relations
$ex_0 = \sum_s e_sx_s$ and $e_sx_0 =0$ for all $1\leq s\leq n$.
We have seen in the previous remark that we may consider each $t_i$ as being an endomorphism of $P^{(2)}$, taking $P^{(2)}$ inside $\rad_{\cF}P^{(2)}$. Since  $\rad^2_{\cF}P^{(2)}=0$, we have that $t_j(e) = e_j$ and $t_i(e_j) =0$ for all $i$ and $j$. Then, since we are considering only the degree zero homomorphisms, it follows that $t_1,\ldots ,t_n$ are linearly independent
and form a basis of the vector space of endomorphisms factoring through $\rad_{\cF}P^{(2)}$. Clearly $t_it_j =0$ for all $i$ and $j$, and 
this means that $\End_RP^{(2)}\cong \Bbbk [t_1,\ldots t_n]/\langle t_1,\ldots t_n\rangle ^2$ and  is commutative.
\end{Example}

 \noindent  Consider the universal short exact sequence $0\to\rad_{\cF}^{d}P^{(d+1)}\to P^{(d+1)}\to P^{(d)}\to 0$
 and let $f\in\End_RP^{(d+1)}$.  Since $\rad_{\cF}^{d}P^{(d+1)} $ is an $f$-stable submodule, $f$ induces for all $d>0$ 
 endomorphisms $f_d\in \End_RP^{(d)}$, hence algebra homomorphisms  $\Phi_d\colon\End_RP^{(d+1)}\to\End_RP^{(d)}$, for all
 $d>0$.
 
\begin{lemma}\label{square} The homomorphisms  $\Phi_d\colon\End_RP^{(d+1)}\to\End_RP^{(d)}$ are surjective and
the diagrams
$$ 
\xymatrix
{ &\mathcal F^{(d)}\ar^{\cong}[d]\ar@{^{(}->}[r]
&\mathcal F^{(d+1)}\ar^{\cong}[d]
\\ &\md\End_RP^{(d)}\ar@{^{(}->}[r]
&\md\End_RP^{(d+1)}}
$$
commute for all the natural numbers $d$.
\end{lemma}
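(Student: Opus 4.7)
The plan is to dispatch the two assertions separately, treating the surjectivity first and then the commutativity, each being a short application of the structural results already established.

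For surjectivity of $\Phi_d$, I will use that $P^{(d+1)}$ is projective in $\cF^{(d+1)}$. Let $\pi\colon P^{(d+1)}\twoheadrightarrow P^{(d)}$ be the canonical quotient provided by the previous lemma. Given $g\in\End_R P^{(d)}$, the composite $g\pi\colon P^{(d+1)}\to P^{(d)}$ is a morphism in $\cF^{(d+1)}$, and since $\pi$ is an epimorphism there, projectivity of $P^{(d+1)}$ yields $f\in\End_R P^{(d+1)}$ with $\pi f = g\pi$. By the very definition of $\Phi_d$, this means $\Phi_d(f)=g$.

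For the commutativity of the square, recall that the vertical equivalences are $X\mapsto \Hom_R(P^{(d)},X)$ and $Y\mapsto \Hom_R(P^{(d+1)},Y)$, and the bottom arrow sends an $\End_R P^{(d)}$-module to the $\End_R P^{(d+1)}$-module obtained by restriction of scalars along the surjection $\Phi_d$ (this is a well-defined full embedding precisely because $\Phi_d$ is surjective). It therefore suffices to produce, for each $X\in\cF^{(d)}$, a natural isomorphism $\Hom_R(P^{(d+1)},X)\cong \Hom_R(P^{(d)},X)$ of $\End_R P^{(d+1)}$-modules, where the right-hand side carries the action pulled back through $\Phi_d$. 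Since $X\in\cF^{(d)}$ forces $\rad_{\cF}^d X=0$, any $\phi\colon P^{(d+1)}\to X$ kills $\rad_{\cF}^d P^{(d+1)}$ and hence factors uniquely as $\phi=\tilde\phi\pi$ with $\tilde\phi\colon P^{(d)}\to X$; this gives the natural isomorphism at the level of Hom-spaces.

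To verify compatibility with the module structures, I will chase a diagram: for $f\in\End_R P^{(d+1)}$, the relation $\pi f=\Phi_d(f)\pi$ gives
\[
\phi\circ f \;=\; \tilde\phi\circ\pi f \;=\; \tilde\phi\circ\Phi_d(f)\circ\pi,
\]
so the unique factorisation of $\phi\circ f$ through $\pi$ is $\tilde\phi\circ\Phi_d(f)$, which is exactly the $\Phi_d(f)$-action on $\tilde\phi$. Thus the square commutes. I do not anticipate a genuine obstacle; the only place one must be careful is keeping track of the order of composition in this last computation so that the action-by-pullback on the right column matches the evident action on the left.
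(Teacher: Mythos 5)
Your proposal is correct and follows essentially the same route as the paper: surjectivity via the projectivity of $P^{(d+1)}$ in $\cF^{(d+1)}$ (lifting $g\pi$ through $\pi$), and commutativity via the fact that every map $P^{(d+1)}\to X$ with $X\in\cF^{(d)}$ kills $\rad_{\cF}^{d}P^{(d+1)}$, so that $\Hom_R(P^{(d+1)},-)$ and $\Hom_R(P^{(d)},-)$ agree on $\cF^{(d)}$. Your explicit check that the factorisation is compatible with the $\End_RP^{(d+1)}$-action pulled back along $\Phi_d$ is a detail the paper leaves implicit, and it is carried out correctly.
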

\begin{proof}  Since $P^{(d)}\in \cF^{(d+1)}$ and $P^{(d+1)}$ is projective in $ \cF^{(d+1)}$, each endomorphism of
$P^{(d)}$ can be lifted to an endomorphism  of $P^{(d+1)}$, and this proves the surjectivity of the $\Phi_d$s.
The equivalences $\cF^{(d)}\to\md\End_RP^{(d)}$ for $d\geq 1$ are given by the functors
$\Hom_R(P^{(d)},-)$. But $P^{(d+1)}/\rad^d_{\cF}P^{(d+1)}\cong P^{(d)}$, and
$\rad^d_{\cF}P^{(d+1)}$ is in the kernel of every homomorphism $f\colon P^{(d+1)}\to U$ for $U\in \cF^{(d)}$. This implies that
 $\Hom_R(P^{(d+1)},-)_{|\mathcal F^{(d)}}=\Hom_R(P^{(d)},-)_{|\mathcal F^{(d)}}$.
\end{proof}

\noindent  We now can formulate the conclusion of these constructions:
\begin{thm}\label{nilmod} Let $R=R(x_0\ldots ,x_n)$ for some positive integer $n$, and let $M=M_{\xi}$ be a local $R$-module of complexity one and dimension $2^n$. Then the category $\mathcal F(M)$ is 
equivalent to the category of finite dimensional modules over the power series ring $\Bbbk[[t_1,\ldots ,t_n]]$.  
\end{thm}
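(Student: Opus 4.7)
The plan is to assemble the Morita equivalences $\cF^{(d)}\cong\md\End_R P^{(d)}$ into a single equivalence
$\cF(M)\cong\finmd\hat A$, where $\hat A=\varprojlim_d\End_R P^{(d)}$, and then identify $\hat A$ with $\Bbbk[[t_1,\ldots,t_n]]$. Since $\cF(M)=\bigcup_d\cF^{(d)}$ and Lemma \ref{square} guarantees these equivalences are compatible with the inclusions $\cF^{(d)}\hookrightarrow\cF^{(d+1)}$ and with the surjections $\Phi_d$, the task reduces to computing the inverse limit $\hat A$.

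The cleanest route I see is to give an explicit model for each $P^{(d)}$. After a change of basis I may assume $\xi=x_0$, so that $M=R/\langle x_0\rangle$ is itself the exterior algebra on $x_1,\ldots,x_n$. For any commutative $\Bbbk$-algebra $A$ equipped with a distinguished tuple $t_1,\ldots,t_n\in A$, I would equip the graded $\Bbbk$-vector space $A\otimes_\Bbbk M$ (with $A$ placed in degree $0$) with an $R$-module structure by
$$x_i\cdot(a\otimes m)=a\otimes x_i m\ \ (i\ge 1),\qquad x_0\cdot(a\otimes m)=\sum_{i=1}^n t_i a\otimes x_i m.$$
Checking the exterior relations $x_0^2=0$ and $x_0x_i+x_ix_0=0$ reduces respectively to the commutativity $t_it_j=t_jt_i$ in $A$ and to the skew-commutativity already present in $M$. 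Thus $A\otimes_\Bbbk M$ becomes a graded $R$-module on which $A$ acts through degree-zero $R$-endomorphisms, yielding an injective ring homomorphism $\alpha\colon A\to\End_R(A\otimes_\Bbbk M)$.

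Specializing to $A=A_d:=\Bbbk[t_1,\ldots,t_n]/\mathfrak m^d$ with $\mathfrak m=\langle t_1,\ldots,t_n\rangle$, I would verify in order: (a) the $\mathfrak m$-adic filtration on $A_d$ induces an $R$-submodule filtration of $A_d\otimes_\Bbbk M$ with subquotients direct sums of copies of $M$, so $A_d\otimes_\Bbbk M\in\cF^{(d)}$; (b) $A_d\otimes_\Bbbk M$ is local in $\cF$, its only $\cF$-quotient isomorphic to $M$ being the natural one $A_d\otimes_\Bbbk M\to(A_d/\mathfrak m)\otimes_\Bbbk M=M$, since any other would force the defining linear functional $\lambda\colon A_d\to\Bbbk$ to vanish on $\mathfrak m A_d$, because of the constraint $\phi(x_0\cdot(a\otimes 1))=x_0\cdot\phi(a\otimes 1)=0$ in $M$; (c) every degree-zero $R$-endomorphism of $A_d\otimes_\Bbbk M$ is left multiplication by an element of $A_d$: its value on $1\otimes 1$ lies in the degree-zero part $A_d\otimes 1$ and so equals $c\otimes 1$ for a unique $c\in A_d$, and the $x_0$-constraint then forces the whole endomorphism to be multiplication by $c$. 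Hence $\alpha$ is an isomorphism. Finally, inductively identifying $A_d\otimes_\Bbbk M$ with $P^{(d)}$ via the universal extensions $0\to\rad^{d-1}_{\cF}P^{(d)}\to P^{(d)}\to P^{(d-1)}\to 0$ and passing to the inverse limit over $d$ yields $\hat A\cong\Bbbk[[t_1,\ldots,t_n]]$.

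The hard part will be this matching in the last step, which amounts to verifying the dimension equality $\dim_\Bbbk\Ext^1_R(P^{(d-1)},M)=\dim_\Bbbk\mathfrak m^{d-1}/\mathfrak m^d$; this is what excludes any extra relation or noncommutativity in $\End_R P^{(d)}$. I would establish it either by constructing inductively a minimal graded $R$-free resolution of $A_{d-1}\otimes_\Bbbk M$ of Koszul type and reading the Ext off it, or---once step (c) is in hand---by using the equivalence $\cF^{(d)}\cong\md A_d$ to translate the Ext-group to the $A_d$-side, where its dimension is transparent.
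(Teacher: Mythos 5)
Your framework matches the paper's: both proceed by showing $\End_RP^{(d)}\cong\Bbbk[t_1,\ldots,t_n]/\langle t_1,\ldots,t_n\rangle^d$ and then passing to the limit via Lemma \ref{square}, and your explicit module $A_d\otimes_\Bbbk M$ with $x_0\cdot(a\otimes m)=\sum_i t_ia\otimes x_im$ is a clean repackaging of the paper's statements (A) and (B) about the generators $e_{i_1\ldots i_{d-1}}$ of the $\cF$-Loewy layers of $P^{(d)}$ (your basis monomials $t_{i_1}\cdots t_{i_{d-1}}\otimes 1$ are exactly those generators). Your checks (a), (b), (c) are correct and can be completed along the lines you indicate.

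The genuine gap is the step you yourself call ``the hard part'': the identification $P^{(d)}\cong A_d\otimes_\Bbbk M$, equivalently the count $\dim_\Bbbk\Ext^1_R(P^{(d-1)},M)=\dim_\Bbbk\mathfrak m^{d-1}/\mathfrak m^{d}=\binom{n+d-2}{d-1}$, which is precisely where almost all of the paper's work lies: the paper computes by hand $\dim_\Bbbk\Hom_R(P^{(d)},M(1))=\sum_{j=1}^{d}\binom{n+j-1}{j}$ and $\dim_\Bbbk\mathcal P_R(P^{(d)},M(1))=\sum_{j=1}^{d-1}\binom{n+j-1}{j}$, using that $P^{(d)}$ is free over $R(x_1,\ldots,x_n)$, to get $\dim_\Bbbk\Ext^1_R(P^{(d)},M)=\binom{n+d-1}{d}$. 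Without this count (or a direct proof that $A_d\otimes_\Bbbk M$ is projective in $\cF^{(d)}$), nothing rules out $\End_RP^{(d)}$ being strictly larger than $A_d$, so the theorem is not yet proved. Moreover, of your two suggested routes, the second is circular: the equivalence $\cF^{(d)}\cong\md A_d$ is only available after you know that $A_d\otimes_\Bbbk M$ is the projective generator $P^{(d)}$ of $\cF^{(d)}$, which is exactly what the dimension count is meant to establish; and you cannot work one level down instead, because $\cF^{(d-1)}$ is not extension closed in $\cF(M)$, so $\Ext^1_R(P^{(d-1)},M)$ is not visible inside $\md A_{d-1}$. The first route (reading the Ext off a resolution of $A_{d-1}\otimes_\Bbbk M$, i.e.\ computing $\underline\Hom_R(A_{d-1}\otimes_\Bbbk M,M(1))$ modulo maps factoring through $R(1)$, using $\Omega X\cong X(-1)$) is viable, but it is essentially the paper's computation and is not carried out in your proposal. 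Two smaller points also need a word once the count is in hand: injectivity of the connecting map $\Hom_R(M^{c},M)\to\Ext^1_R(A_{d-1}\otimes_\Bbbk M,M)$ for your filtration sequence (it follows from your locality step (b)), and the fact that the middle term of the universal extension is indeed the projective cover $P^{(d)}$, which the paper uses as the construction of $P^{(d)}$.
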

\begin{proof} We will show that  $\End_RP^{(d)}\cong\Bbbk [t_1,\ldots ,t_n]/{\langle t_1,\ldots ,t_n\rangle^d}.$ Then, the claim will follow by Lemma \ref{square} by taking limits.
To prove this result, a detailed study of the modules $P^{(d)}$ is necessary. We assume that $x_0\in\xi$ and
we will show by induction on $d$ that:

\medskip
\noindent (A)  $\rad_{\cF}^{d-1}P^{(d)}$ is $\mathcal F$-semisimple, and is a direct sum of $\binom{n+d-2}{d-1}$ copies of $M$.  Note that $\binom{n+d-2}{d-1}$ is the number of pairwise different monomials of degree $d-1$ in
$\Bbbk [t_1,\ldots ,t_n]$.
The generators of these copies of $M$ are all in degree zero. Denoting these generators by $e_{i_1\ldots i_{d-1}}$
with $1\leq i_j \leq n$, we have for all permutations $\pi\in S_{d-1}$:
 $$e_{i_1\ldots i_{d-1}}=  e_{i_{\pi(1)}\ldots i_{\pi(d-1)}}$$
 \noindent where we consider the indices ${i_1\ldots i_{d-1}}$ as ``monomials" in the ``letters" $1,\ldots ,n$.

 \medskip
 \noindent (B)  For each generating element $e_{i_1\ldots i_{d-2}}+\rad_{\cF}^{d-1}P^{(d)}$ of $\rad_{\cF}^{d-2}P^{(d)}/{\rad_{\cF}^{d-1}P^{(d)}}$ 
 we have
$$e_{i_1\ldots i_{d-2}}x_0=\sum_{r=1}^ne_{i_1\ldots i_{d-2}r}x_r.$$

\noindent Statement (A) is trivially true for $P^{(1)} =M$, and both (A) and (B) were proved in example \ref{P^2} for $d=2$.
Assume that (A) and (B) hold true for all $P^{(s)}$ with $s\leq d$. Therefore we may choose a minimal
system of generators of the $\cF$-Loewy layers of $P^{(d)}\colon e, e_i, e_{i,j},\ldots e_{i_1\ldots i_{d-1}}$ with $e\in\cF$-top of $P^{(d)}$,
and with $e_{i_1\ldots i_s}$ in the $\cF$-top of $\rad_{\cF}^sP^{(d)}$, satisfying (A) and (B) for all $s\leq d-1$.

\smallskip
\noindent We construct $P^{(d+1)}$ as middle term of the universal short exact sequence
$$0\to M^c\to P^{(d+1)}\to P^{(d)}\to 0,$$ where $c=\dm_{\Bbbk}\Ext_R^1(P^{(d)},M)=\dm_{\Bbbk}\underline\Hom_R(P^{(d)},M(1))$. As an $R(x_1\ldots x_n)$-module, $M$ is free since it is indecomposable of maximal Loewy length. Thus, $P^{(d)}$
is equal to a direct sum of $\sum_{s=0}^{d-1}\binom{n+s-1}{s}$ copies of $M$ and is also a free $R(x_1\ldots x_n)$-module.
But $P^{(d)}$ is indecomposable as $R$-module (by the action of $x_0$) since it is  is a local object in the 
full length category $\mathcal F(M)$. In order to evaluate $c$, let us describe all the graded homomorphisms from $P^{(d)}$ to $M(1)$, as well as those factoring through a free module. 
 Let $g\colon P^{(d)}\to M(1)$ be an $R$-linear map.  Then, $g$ takes each of 
the generators $e, e_i, \ldots e_{i_1\ldots i_{d-1}}$ of (the linear module) $P^{(d)}$ into an element of degree $0$ in $M(1)$.

\smallskip
\noindent The cyclic module $M(1)$ is generated degree $-1$, and let us denote by $f$ its generator. Then,  as a $\Bbbk$-vector space, $M(1)_0 = \oplus_{i=1}^n \Bbbk f x_i$. Hence:
$$\begin{array}{ccc}
g(e)                      & =       &  f\sum_{r=1}^n a_rx_r \\
g(e_i)                   & =       &   f\sum_{r=1}^n a^i_rx_r \\
g(e_{ij})                 & =       &   f\sum_{r=1}^n a^{ij}_rx_r \\
\vdots     &\vdots              &\vdots                                  \\
g(e_{i_1\ldots i_s}) & =       &   f\sum_{r=1}^na^{i_1\ldots i_s}_rx_r \\
\vdots     &\vdots              &\vdots       \\
\end{array}$$
\noindent By induction, $e_{i_1\ldots i_{s}} =  e_{i_{\pi(1)}\ldots i_{\pi(s)}}$ for all $1\leq s\leq d-1$, so we
also get that $a^{i_1\ldots i_s}_r = a^{i_{\pi (1)}\ldots i_{\pi (s)}}_r$. In this way, for every choice of the scalars
$a_r, a_r^i, \ldots a^{i_1\ldots i_s}_r$ , the map $g$ is a well-defined
$R(x_1\ldots x_n)$-linear map,  since $P^{(d)}$ is free over $R(x_1\ldots x_n)$.  We want to define $g$ as an $R$-homomorphism, so the action of $x_0$ has to be considered separately: Since $M(1)$ is annihilated by $x_0$,
we have $g(px_0) =0$ for all $p\in P^{(d)}$.
This implies that $$0 = g(ex_0) = \sum_{i=1}^n g(e_i)x_i=f\sum_{i=1}^n\sum_{r=1}^na^i_rx_r x_i.$$
Since $x_rx_i+x_ix_r=0$ for all $r$ and $i$, we get $a^i_r = a^r_i$ for all $i$ and $r$. This means that the
set of admissible $n^2$-tuples $(a^i_r)_{1\leq i,r\leq n}$ with  $a^i_r = a^r_i$ form a subspace of $\Bbbk^{n^2}$ of dimension
 $\binom{n+1}{2}$ .
 \smallskip
 
\noindent Consider now for $1<s< d-1$  the element $e_{i_1\ldots i_s}$. We get
$$0= g(e_{i_1\ldots i_s}x_0)= \sum_{j=1}^ng(e_{i_1\ldots i_sj}x_j) = f\sum_{j=1}^n\sum_{r=1}^n a^{i_1\ldots i_sj}_rx_r x_j.$$
Again, we have $a^{i_1\ldots i_sj}_r=a^{i_1\ldots i_sr}_j$ for all $r$ and $j$. Additionally, we
see that $a^{i_1\ldots i_si_{s+1}}_r=a^{i_{\pi(1)}\ldots i_{\pi(s)}i_{\pi(s+1)}}_r$ for all $\pi\in S_{s+1}$. Consequently we have 
 $a_{i_{s+2}}^{i_1\ldots i_si_{s+1}}=a^{i_{\pi(1)}\ldots i_{\pi(s)}i_{\pi(s+1)}}_{ i_{\pi (s+2)}}$ for all $\pi\in S_{s+2}$.
Hence  the families of coefficents   $(a_{i_{s+2}}^{i_1\ldots i_si_{s+1}})$ with these relations form $\Bbbk$-vector
spaces of dimensions $\binom{n+s}{s+1}$.
\smallskip

\noindent Each admissible choice of these coefficients  $a_i,a_i^j,\ldots  ,a_{i_{s+2}}^{i_1\ldots i_si_{s+1}}$
defines a unique  $R$-linear map $g\colon P^{(d)}\to M(1)$.
Therefore, the $\Bbbk$-dimension of $\Hom_R(P^{(d)},M(1))$ is $\sum_{j=1}^{d}\binom{n+j-1}{j}$. 

\smallskip
\noindent  Consider now the vector space $\mathcal P_R(P^{(d)},M(1))$ of maps $P^{(d)}\to M(1)$ factoring through
a projective $R$-module, hence factoring through $R(1)$-the projective cover of $M(1)$. 
Let $\rho\colon P^{(d)}\to R(1)$ and $\pi\colon R(1)\to M(1)$. Denote the generator of $R(1)$ by $\hat f$, so $\pi (\hat f) =f$. Since $\pi\rho\in\mathcal P_R(P^{(d)},M(1))$, we get for $\rho$, similar to the preceding procedure
$$\begin{array}{ccc}
\rho(e)                      & =       &  \hat f(\sum_{r=1}^n a_rx_r  + ax_0)\\
\rho(e_i)                   & =       &    \hat f(\sum_{r=1}^n a^i_rx_r + a^ix_0) \\
\rho(e_{ij})                 & =       &    \hat f(\sum_{r=1}^n a^{ij}_rx_r  + a^{ij}x_0)\\
\vdots     &\vdots              &\vdots                                  \\
\rho(e_{i_1\ldots i_s}) & =       &    \hat f(\sum_{r=1}^n a^{i_1\ldots i_s}_rx_r + a^{i_1\ldots i_s}x_0) \\
\vdots     &\vdots              &\vdots       \\
\end{array}$$
The coefficients $a^{i_1\ldots i_s}_r$  satisfy the same relations as in the first part of the proof
 ($a_i^j = a_j^i$, $\ldots$),
since $\pi\rho\in\mathcal P_R(P^{(d)},M(1))\subseteq\Hom_R(P^{(d)},M(1))$. In this way we have defined an $R(x_1\ldots x_n)$-linear
map. Since it has to be $R$-linear, the action of $x_0$ has to be considered as well.
We have $$\rho(e)x_0=\hat f(\sum_{r=1}^n a_rx_r  + ax_0)x_0  = \hat f\sum_{r=1}^n a_rx_r x_0$$
since $x_0^2=0$. On the other hand, we have:
$$\rho (ex_0)=\rho(\sum_j e_jx_j)= \sum_j\hat f(\sum_{r=1}^na^j_rx_r + a^jx_0)x_j=\hat f(\sum_{j,r}a^j_rx_rx_j+a^jx_0x_j).$$ 
Since $a_i^j = a_j^i$ we have $\sum_{j,r}a^j_rx_rx_j =0$
and since we want $\rho(ex_0)=\rho(e)x_0$, we see that $a_j = -a^j$, for $1\leq j \leq n$.  Similarly, we get for $s<d-1$
\begin{align*}
\rho(e_{i_1\ldots i_s})x_0\notag&=\hat f(\sum_{1\le r\le n}a^{i_1\ldots i_s}_rx_r + a^{i_1\ldots i_s}x_0)x_0\notag\\&=
\hat f(\sum_{1\le r\le n}a^{i_1\ldots i_s}_rx_r x_0)=\rho(e_{i_1\ldots i_s}x_0)=\rho( \sum_j e_{i_1\ldots i_{s+1,j}}x_j)\notag\\& =  
\sum_j\hat f(\sum_{1\le r\le n}a^{i_1\ldots i_{s+1,j}}_rx_r + a^{i_1\ldots i_{s+1,j}}x_0)x_j=\hat f\sum_j a^{i_1\ldots i_{s+1,j}}x_0x_j\notag,
\end{align*}
since $\sum_j\sum_{r}a^{i_1\ldots i_{s+1,j}}_rx_r x_j =0$. This means that $a^{i_1\ldots i_s}_r= -a^{i_1\ldots i_{s+1,r}}$. 

\noindent Finally, for $d-1$ we get $e_{i_1\ldots i_{d-1}}x_0 =0$ and therefore $$0=\rho (e_{i_1\ldots i_{d-1}}x_0)=\hat f\sum_{r=1}^na^{i_1\ldots i_{d-1}}_rx_r x_0.$$ 
\noindent Thus, all the coefficients
$a^{i_1\ldots i_{d-1}}_r$ are zero. Notice that the map $\pi\rho$ is independent  of the choice of 
the coefficient $a\in\Bbbk$, and the coefficients  $a^{i_1\ldots i_s}$ are determined by the $a^{i_1\ldots i_{s-1}}_{i_s}$,
hence $\dm_{\Bbbk}\mathcal P_R(P^{(d)},M(1))=\sum_{j=1}^{d-1}\binom{n+j-1}{j}$. Consequently
\begin{align*}
\dm_{\Bbbk}\Ext_R^1(P^{(d)},M)\notag&=\dm_{\Bbbk}\underline{\Hom}_R((P^{(d)},M(1))\notag\\&
=\dm_{\Bbbk}\Hom_R(P^{(d)},M(1))-\dm_{\Bbbk}\mathcal P_R(P^{(d)},M(1))\notag\\&=\binom{n+d-1}{d}.
\end{align*}
Therefore $\rad_{\cF}^{d}P^{(d+1)}$  is a direct sum
of $\binom{n+d-1}{d}$ copies of $M$.
\medskip

\noindent  Next, we construct generators $e_{i_1\ldots i_d}$ for $\rad_{\cF}^{d}P^{(d+1)}$ 
satisfying the conditions (A) and (B). For this, we consider a projective cover $P$ of $\rad_{\cF}^{d-1}P^{(d+1)}$ 
in $\cF^{(2)}$.
 Since  $\rad_{\cF}^{d-1}P^{(d)}$ is a direct sum of  $\binom{n+d-2}{d-1}$ copies of $M$,  the
projective cover of  $\rad_{\cF}^{d-1}P^{(d+1)}$ in $\cF^{(2)}$ is a direct sum off $\binom{n+d-2}{d-1}$ copies
of $P^{(2)}$. The $\cF$-top of $\rad_{\cF}^{d-1}P^{(d+1)}$ is generated by the elements
$ e_{i_1\ldots i_{d-1}} +\rad_{\cF}^{d}P^{(d+1)}$. Denote the generators of the 
 $\cF$-tops of the copies of $P^{(2)}$ in the projective cover $P$ by $f_{i_1\ldots i_{d-1}}$, 
 with the convention that $f_{i_1\ldots i_{d-1}}=f_{i_{\pi (1)}\ldots i_{\pi (d-1)}}$, for all $\pi\in S_{d-1}$.
 Let us set $f_{i_1\ldots i_{d-1}}x_0=\sum_jf_{i_1\ldots i_{d-1}j}x_j$. Then the elements $f_{i_1\ldots i_{d-1}j }$
generate the $\cF$-radical of $P$.
\noindent Let 
$\gamma\colon P\to\rad_{\cF}^{d-1}P^{(d+1)}$ be the projective cover
with $\gamma(f_{i_1\ldots i_{d-1}})=e_{i_1\ldots i_{d-1}}$, and define 
$\gamma(f_{i_1\ldots i_{d-1}j})=e'_{i_1\ldots i_{d-1}j}$.  As an immediate consequence, we get that 
$e_{i_1\ldots i_{d-1}}x_0 = \sum_je'_{i_1\ldots i_{d-1}j}x_j$. 

\smallskip
\noindent {The identities
 $0=-e_{i_1\ldots i_{d-2}}x_0^2=\sum_re_{i_1\ldots i_{d-2}r}x_0x_r=\sum_r\sum_se'_{i_1\ldots i_{d-2}rs}x_sx_r$ imply
that $e'_{i_1\ldots i_{d-2}rs} = e'_{i_1\ldots i_{d-2}sr}$.} But $e_{i_1\ldots i_{d-2}i_{d-1}} =
e_{i_{\pi(1)}\ldots i_{\pi(d-2)}i_{\pi(d-1)}}$ for $\pi\in S_{d-1}$. Therefore the elements
$e_{i_1\ldots i_{d-2}i_{d-1}i_d} = e'_{i_1\ldots i_{d-2}i_{d-1}i_d}$ satisfy the required properties from (A) and (B).
\medskip

\noindent  Finally we have to show that $\End_RP^{(d)}=\Bbbk [t_1,\ldots ,t_n]/\langle t_1,\ldots ,t_n\rangle^d$.
It is trivial for $d=1$ and by \ref{P^2}, we know that 
$\End_RP^{(2)}=\Bbbk [t_1,\ldots ,t_n]/\langle t_1,\ldots ,t_n\rangle^2$ with the action $t_j(e) = e_j$ and
$t_i(e_j)=0$. 
The functor $$\Hom_R(P^{(d)},-)\colon\cF^{(d)}\longrightarrow\md\End_RP^{(d)}$$ is an equivalence
and the image of $P^{(d)}$ under this functor is the local algebra $\End_RP^{(d)}$ with identity $1_d$. Let $Y$ be module over $\End_RP^{(d)}$. Then each $R$-linear map $g\colon\End_RP^{(d)}\to Y$ is uniquely determined by the value $g(1_d)$, and for every $y\in Y$ there exists a unique $\End_RP^{(d)}$-homomorphism $g_y\colon\End_RP^{(d)}\to Y$ with
$g_y(1_d)=y$. This means that for each object $X$  in the subcategory $\cF^{(d)}$, and for every
scalar $x\in X_0$, there exists a unique homomorphism $f_x\colon P^{(d)}\to X$ with $f_x(e)=x$ and every $R$-homomorphism
$f\colon P^{(d)} \to X$ must have this form. Since $P^{(d)}_0$ is the $\Bbbk$-span of the set
$\{e, e_i, e_{ij},\ldots e_{i_1\ldots i_{d-1}}\}$, there exists exactly one endomorphism $t_i\in\End_RP^{(d)}$
such that $t_i(e) = e_i$. 
\smallskip

\noindent As $t_i$ is $R$-linear, $t_i(ex_0) = e_ix_0=\sum_j e_{ij}x_j$ and
$t_i(ex_0) = \sum_jt_i(e_jx_j)$, which means $t_i(e_j)=e_{ij}=t_it_j(e)$. By $(A)$, $e_{ij}=e_{ji}$, so
$t_it_j = t_jt_i$ and the $\binom{n+1}{2}$ different products are linearly independent.
We assume by induction that for all $s<d-1$ we have $e_{i_1\ldots i_s} = t_{i_1}\cdots t_{i_s} (e)$,
hence $t_{i_1} \cdots t_{i_s} = t_{i_{\pi (1)}}\cdots t_{i_{\pi (s)}} $ for all $\pi\in S_s$. All the
$\binom{n+s-1}{s}$ monomials in $t_1,\ldots ,t_n$ of degree $s$ are linearly independent in $\End_RP^{(d)}$.
Consider  $t_i(e_{i_1\ldots i_{d-3}}x_0)= e_{i_1\ldots i_{d-3}i}x_0= \sum_je_{i_1\ldots  i_{d-3}ij}x_j
 =\sum_j t_i(e_{i_1\ldots i_{d-3}j})x_j$.
 Consequently we get $$t_i(t_{i_1}\cdots t_{i_{d-3}}t_j(e))=t_i(e_{i_1\ldots i_{d-3}j})= e_{i_1\ldots i_{d-3}ji},$$
 and so, $e_{i_1\ldots i_{d-1}} = e_{i_{\pi  (1)}\ldots i_{\pi (d-1)}}$ implies that $t_{i_1}\cdots t_{d-1}=t_{i_{\pi (1)}}\cdots t_{i_{\pi (d-1)}}$ for all permutations $\pi\in S_{d-1}$.
Therefore, the $\binom{n+d-2}{d-1}$ monomials
in $t_1,\ldots t_n$ of degree $d-1$ are linearly independent in $\End_RP^{(d)}$. It is easy to show that 
$t_i(e_{i_1\ldots i_{d-1}}) =0$ for all $i$ and all $(i_1\ldots i_{d-1})$. This means that the algebra
$\End_RP^{(d)}$ is isomorphic to $\Bbbk [t_1,\ldots ,t_n] /\langle t_1,\ldots ,t_n\rangle^d$.
\end{proof}

\noindent Let $M=M_{\xi}$ and ${\mathcal F}(\{M(i)\}_{i\in\mathbb Z})$ denote the full 
subcategory of $\md R$ consisting of those graded modules 
having a filtration with composition factors of the type $M(i)$, $i\in\mathbb Z$. Note that the category ${\mathcal F}(\{M(i)\}_{i\in\mathbb Z})$ is not an abelian category, if $n>1$.
\medskip

\noindent One consequence of  Theorem \ref{Eis} and Corollaries \ref{cx1-filtr} and \ref{oneM},
 every indecomposable module of dimension larger 
than $2^n$ in ${\mathcal F}(\{M(i)\}_{i\in\mathbb Z})$ is a relative extension of two modules 
in the same subcategory, hence this subcategory is closed under syzygies and by duality, 
under cosyzygies. Finally, denote by $\underline{\mathcal F}$ the image of $\mathcal F$ in 
$\underline\md R$. 
 We have the following consequence of this discussion:
 
\begin{prop} $\mathcal T_M=\underline{\mathcal F}(\{M(i)\}_{i\in\mathbb Z})$. 
\end{prop}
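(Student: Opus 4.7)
The plan is to prove the equality by double inclusion. For the easier inclusion $\underline{\mathcal F}(\{M(i)\}_{i\in\mathbb Z}) \subseteq \mathcal T_M$ I would induct on the length of a filtration of $X \in \mathcal F(\{M(i)\}_{i\in\mathbb Z})$ by successive quotients $M(j_i)$. Every graded shift $M(j)$ lies in $\mathcal T_M$ because Theorem \ref{Eis} identifies it with $\Omega^{-j}M$, and each short exact sequence $0 \to X_{i-1} \to X_i \to M(j_i) \to 0$ induces a distinguished triangle in $\underline{\md} R$ whose two outer vertices lie in $\mathcal T_M$, forcing $X_i$ to do so as well.

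For the reverse inclusion $\mathcal T_M \subseteq \underline{\mathcal F}$ I would instead verify that $\underline{\mathcal F}$ is itself a thick subcategory of $\underline{\md} R$ containing $M$, which suffices by minimality of $\mathcal T_M$. Containment of $M$ is clear, and closure under the suspension $\Omega^{\pm 1}$ has been recorded in the paragraph preceding the proposition. For closure under triangles, given a distinguished triangle $L \to N \to K \to \Omega^{-1}L$ with $L, N \in \underline{\mathcal F}$, I would choose preimages $L_0, N_0 \in \mathcal F$, lift the morphism $L \to N$ to an $R$-linear map $L_0 \to N_0$, and form the pushout recalled in Section \ref{intro}. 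This realises $K$, up to stable isomorphism, as the middle term of a short exact sequence $0 \to N_0 \to K_0 \to \Omega^{-1}L_0 \to 0$ in $\md R$; because $\mathcal F$ is evidently extension-closed (concatenate the filtration of $N_0$ with the pullback along $K_0 \to \Omega^{-1}L_0$ of the filtration of $\Omega^{-1}L_0$) and closed under $\Omega^{-1}$, we conclude $K_0 \in \mathcal F$ and hence $K \in \underline{\mathcal F}$.

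The main obstacle is closure of $\underline{\mathcal F}$ under direct summands. Suppose $X \in \mathcal F$ decomposes as $X \cong Y \oplus Z$ in $\underline{\md} R$; by Krull--Schmidt in the stable category I may assume $Y$ is indecomposable, hence non-projective and of complexity one (since $\cx Y \leq \cx X \leq 1$). Corollary \ref{oneM} then provides a filtration of $Y$ by graded shifts $M_{\xi_Y}(m_l)$ of a \emph{unique} linear module $M_{\xi_Y}$, and it remains to force $\xi_Y = \xi$. Assuming the contrary, Lemma \ref{Hom lemma}(ii) yields $\underline{\Hom}_R(M_{\xi_Y}, M_\xi(i)) = 0$ and $\Ext^k_R(M_{\xi_Y}, M_\xi(i)) = 0$ for every $k \geq 1$ and every $i \in \mathbb Z$; two d\'evissages---first along the filtration of $Y$ and then along the filtration of $X$---via the long exact sequences of stable Hom then force $\underline{\Hom}_R(Y, X) = 0$. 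However, the splitting $X \cong Y \oplus Z$ in the stable category exhibits $\underline{\End}_R(Y)$ as a direct summand of $\underline{\Hom}_R(Y, X)$, and $\operatorname{id}_Y$ is a nonzero element of $\underline{\End}_R(Y)$ because $Y$ is non-projective---a contradiction. Thus $\xi_Y = \xi$, $Y$ lies in $\mathcal F$, and the inclusion follows.
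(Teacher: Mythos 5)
Your proposal is correct and takes essentially the same route as the paper: both directions reduce to realizing a distinguished triangle by the pushout short exact sequence from Section \ref{intro} and invoking the extension and (co)syzygy closure of $\mathcal F(\{M(i)\}_{i\in\mathbb Z})$. The only real difference is that you also verify closure under stable direct summands explicitly, via Corollary \ref{oneM}, Lemma \ref{Hom lemma}(ii) and a d\'evissage forcing $\underline\Hom_R(Y,X)=0$ when the parameters differ; this step is correct and fills in a point the paper's proof leaves implicit.
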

\begin{proof} We only need to show that $\underline{\mathcal F}(\{M(i)\}_{i\in\mathbb Z})$ is closed 
under triangles in $\underline\md R$. But every triangle 
$A\rightarrow B\rightarrow C\rightarrow\Omega^{-1}A$ with $A$ and $C$ in 
$\underline{\mathcal F}({\{M(i)\}_{i\in\mathbb Z})}$ comes from a short 
exact sequence $0\rightarrow A\rightarrow I(A)\oplus B\rightarrow C\oplus I\rightarrow 0$ 
where $I(A)$ is the injective envelope of $A$, and $I$ is some injective $R$-module. As ${\mathcal F}(\{M(i)\}_{i\in\mathbb Z})$ 
is closed under extensions, $\underline{\mathcal F}(\{M(i)\}_{i\in\mathbb Z})$ is  
closed under triangles, and so it is a thick subcategory of the stable module 
category containing $M$.
\end{proof}

 \noindent In order to prove that the thick category generated by a module of the form
$M_{\xi}$ does not contain a proper non zero thick subcategory, we need to recall a few facts. We may assume without loss of generality that $R=R(\xi,x_1,\cdots x_n)$, and thus the polynomial algebra $S=\Bbbk[z,y_1,\cdots,y_n]$, where $\{z,y_1,\cdots,y_n\}$ is the dual basis of $\{\xi,x_1,\cdots x_n\}$. Consider the point $S$-module $\Bbbk[z]=S/{\langle y_1,\cdots,y_n\rangle}$. Under the BGG correspondence 
$$\underline\md R\rightarrow\dfrac{D^b(\md R)}{\mathcal P}\rightarrow\dfrac{D^b(\md S)}{\mathcal {FD}} $$

\noindent where $\mathcal P$ denotes the thick subcategory of perfect complexes in $D^b(\md R)$ and $\mathcal {FD}$ is the thick subcategory of $D^b(\md S)$ consisting of complexes of finite dimensional modules, the module $M_{\xi}$ corresponds to the shifted stalk complex $\Bbbk[z][-n]$ in $D^b(\md S)$.  In the following we use some arguments, which are well known to topologists, see e.g. \cite{DS,N,T} and we are grateful  to Greg Stevenson for very helpful conversations. We will  need the notion of support of a triangulated subcategory of $D^b(\mathcal S)$. 
If $C$ is a complex in $D^b(\mathcal S)$, we define its support $$\Supp(C)=\{\frak p\in\Spec^hS|\ \ H^*(C)_{\frak p}\ne 0\},$$ where $\Spec^hS$ denotes the homogeneous spectrum of $S$ consisting of the relevant homogeneous prime ideals in $S$. Then, the support of a triangulated subcategory $\mathcal A$ of $D^b(\mathcal S)$, is the union of the supports of the objects in $\mathcal A$. 
 \begin{thm} Let  $R=R(V)$ be the exterior algebra in $n+1$ indeterminates and let $\xi\in{\bf P}(V)$. Let 
$M=M_{\xi}$ be a cyclic $R$-module of complexity one. Then each nonzero element of $\mathcal T_M$ generates $\mathcal T_M$. 
 \end{thm}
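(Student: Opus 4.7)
The plan is to transfer the problem via the BGG correspondence to the polynomial ring side and then invoke the classification of thick subcategories of the bounded derived category of coherent sheaves on projective space. Under BGG, $M = M_\xi$ corresponds to the shifted stalk complex $\Bbbk[z][-n] \in D^b(\md S)/\mathcal{FD}$, where $\Bbbk[z] = S/\langle y_1, \ldots, y_n \rangle$, and its support in $\Spec^h S$ is the single homogeneous prime $\mathfrak{p} = (y_1, \ldots, y_n)$: any homogeneous prime strictly containing $\mathfrak{p}$ must also contain $z$, hence coincides with the irrelevant maximal ideal, which is excluded from $\Spec^h S$. This $\mathfrak{p}$ corresponds to a closed point $p$ of $\mathbf{P}^n$.

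First I would verify that every object of $\mathcal T_M$ has support contained in $\{\mathfrak{p}\}$, using the ideal closure of $\mathcal T_M$ together with the standard compatibility of $\Supp$ with triangles and with tensor products on the polynomial ring side. For any $X \in \mathcal T_M$ which is nonzero in $\underline\md R$, the image of $X$ in $D^b(\md S)/\mathcal{FD}$ is nonzero, so $\Supp X$ is nonempty and hence equals $\{\mathfrak{p}\}$. Next I would invoke the classification of thick tensor-ideal subcategories of $D^b(\coh \mathbf{P}^n)$ in terms of specialization-closed subsets of $\mathbf{P}^n$, in the form valid for arbitrary exterior algebras attributed to Stevenson in the remark just above the statement. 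Since the closed point $\{p\}$ is a minimal nonempty specialization-closed subset of $\mathbf{P}^n$, the ideal-closed thick subcategory generated by $X$ coincides with $\mathcal T_M$.

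To promote this to an equality of plain thick subcategories, I would localize the situation at $\mathfrak{p}$, passing to the regular local ring $A = S_{\mathfrak{p}}$ of dimension $n$; under this localization $\mathcal T_M$ becomes the subcategory of $D^b(A)$ consisting of complexes with finite length cohomology, and $M$ corresponds to a shift of the residue field $A/\mathfrak{p}A$. Applying the Hopkins--Neeman classification of thick subcategories of $D^{\text{perf}}(A) = D^b(A)$ to the regular local ring $A$, the only specialization-closed subsets of $\Spec A$ contained in $\{\mathfrak{p}A\}$ are $\emptyset$ and $\{\mathfrak{p}A\}$, so this localized subcategory has no proper nonzero thick subcategories, and in particular $\langle X \rangle = \mathcal T_M$. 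The main obstacle is making this last step rigorous: one must verify that the passage from $\underline\md R$ via BGG and then via localization at $\mathfrak{p}$ faithfully translates the relation ``$X$ generates $\mathcal T_M$ as a thick subcategory'' between the two settings. Once this transfer is set up cleanly, the combined use of Stevenson's tensor-ideal classification globally on $\mathbf{P}^n$ and Hopkins--Neeman locally at $\mathfrak{p}$ finishes the argument.
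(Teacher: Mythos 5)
Your first half coincides with the paper's argument: pass through the BGG correspondence, observe that the support of $\sigma(\mathcal T_M)$ in $\Spec^hS$ is the single prime $\mathfrak p=\langle y_1,\dots,y_n\rangle$, and appeal to the Thomason/Dell'Ambrogio--Stevenson classification of tensor-ideal thick subcategories by specialization-closed supports. Where you diverge is precisely at the point you yourself flag as the main obstacle: upgrading from tensor ideals to plain thick subcategories. Your plan --- localize at $\mathfrak p$, identify the objects of $D^b(\coh {\bf P}^n)$ supported at the corresponding closed point with the complexes of finite-length cohomology over the regular local ring $A$, and invoke Hopkins--Neeman to see that this category has no proper nonzero thick subcategories --- is workable, but it requires setting up that equivalence and checking that thick generation is preserved under the transfer, which you leave open. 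The paper sidesteps this entirely: every nonzero $X\in\mathcal T_M$ is a direct sum of indecomposable modules of complexity one, so the earlier Proposition on ideal closure (applied to each summand) shows that $\mathcal T_X$ is itself already closed under tensor products. Hence both $\mathcal T_M$ and $\mathcal T_X$ are tensor-ideal thick subcategories with the same support (nonempty, hence equal to $\{\mathfrak p\}$), and Thomason's theorem gives $\mathcal T_X=\mathcal T_M$ in one step, with no localization needed. If you add that one observation, your second stage becomes unnecessary; if you prefer to keep your route, you still owe the reader the faithfulness of the passage to the local category, which is the only genuinely incomplete point in your write-up.
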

 \begin{proof}  
 We need to show that $M\in\mathcal T_X$ for each nonzero 
$X\in\mathcal T_M$. Let $\sigma(\mathcal T_M)$ and $\sigma(\mathcal T_X)$ denote the images of $\mathcal T_M$ and of $\mathcal T_X$ in $D^b(\md S)$ under the BGG correspondence. Both are thick subcategories closed under tensor products since this was the case with $\mathcal T_M$ and $\mathcal T_X$. But the support of $\sigma(\mathcal T_M)$ in $\Spec^hS$, consists only of the ideal $\langle y_1,\cdots,y_n\rangle$ of height $n$. Therefore, $\Supp(\sigma(\mathcal T_M))=\Supp(\sigma(\mathcal T_X))$ and by \cite[Theorem 3.15]{T} (see also \cite{DS}), we have $\mathcal T_M=\mathcal T_X$.
\end{proof}

 \noindent  Summarizing the results of this section, we have:
 
 \begin{thm}\label{modules of cx 1} Let $R=R(V)$ be the exterior algebra in $n+1$ indeterminates and let $\mathcal T$ be the thick subcategory of $\underline\md R$ 
consisting of all the modules of complexity one. Then 
$\mathcal T=\bigsqcup_{\xi\in {\bf P}(V)}\mathcal T_{M_{\xi}}$ is the decomposition 
of $\mathcal T$ into its irreducible thick subcategories. \qed
 \end{thm}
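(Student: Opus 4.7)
My plan is to verify the statement by assembling several results established earlier in the paper: complexity-one is closed under triangles and summands, every indecomposable complexity-one module is filtered by shifts of a \emph{unique} $M_{\xi}$, distinct $\mathcal{T}_{M_\xi}$'s are orthogonal in the stable category, and each $\mathcal{T}_{M_\xi}$ is irreducible by the theorem just established.

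First I would check that $\mathcal T$ is a thick subcategory of $\underline{\md} R$. Closure under graded shifts and under $\Omega^{\pm 1}$ is immediate since Eisenbud's theorem \ref{Eis} gives $\Omega M \cong M(-1)$ on complexity-one modules. Closure under summands is clear because a nonzero summand of a module of bounded Betti numbers has bounded Betti numbers. The key point is closure under triangles, which is precisely part (2) of the lemma preceding Corollary \ref{cx1-filtr}: a triangle $L\to M\to N\to \Omega^{-1}L$ with $L$ and $M$ of complexity one forces $N$ to be free or of complexity one. Hence $\mathcal T$ is thick.

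Next, I would establish the decomposition. Let $X\in\mathcal T$, and write $X=\bigoplus_{j} X_j$ as a direct sum of indecomposables (up to projective summands, which are zero in $\underline{\md} R$). Each nonzero $X_j$ has complexity one, and by Corollary \ref{oneM} it admits a filtration by shifts of $M_\xi$ for a \emph{unique} $\xi=\xi(X_j)\in{\bf P}(V)$. Since the category $\mathcal F(\{M_\xi(i)\}_{i\in\mathbb Z})$ is closed under extensions and syzygies, such an $X_j$ belongs to $\mathcal T_{M_{\xi(X_j)}}$ by the Proposition just preceding this theorem. Grouping summands with a common $\xi$ gives $X=\bigoplus_{\xi\in{\bf P}(V)} X^{(\xi)}$ with $X^{(\xi)}\in\mathcal T_{M_\xi}$, which is the asserted decomposition.

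It remains to show that the thick subcategories $\mathcal T_{M_\xi}$ are pairwise orthogonal and irreducible. For orthogonality, suppose $Y\in\mathcal T_{M_\xi}\cap\mathcal T_{M_\eta}$ with $\xi\ne\eta$ is indecomposable and nonzero in $\underline{\md} R$. Then $Y$ admits filtrations by shifts of both $M_\xi$ and $M_\eta$; but the uniqueness statement in Corollary \ref{oneM} forces $\xi=\eta$, a contradiction (alternatively, Lemma \ref{Hom lemma}(ii) gives $\underline{\Hom}_R(M_\xi,M_\eta(i))=0$ for all $i$, and an inductive argument on filtration length shows $\underline{\Hom}_R(A,B)=0=\Ext_R^*(A,B)$ for $A\in\mathcal T_{M_\xi}$, $B\in\mathcal T_{M_\eta}$, so $Y$ is both a subobject and quotient of itself with no nonzero endomorphism, hence zero). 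Irreducibility of each summand $\mathcal T_{M_\xi}$ is the content of the preceding theorem.

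The only step requiring genuine care is the orthogonality: one must be sure that the ``unique $\xi$'' from Corollary \ref{oneM} is actually an invariant of the isomorphism class in $\underline{\md} R$, not merely of a particular filtration. This is guaranteed because the filtration arises from the canonical $\cF$-Loewy filtration once the ambient category $\cF(M_\xi)$ is fixed, and any two $\cF(M_\xi)$ and $\cF(M_\eta)$ intersect trivially in $\md R$ by the $\Hom$-vanishing of Lemma \ref{Hom lemma}(ii). Once this is in hand, the four ingredients combine immediately to yield the decomposition $\mathcal T=\bigsqcup_{\xi\in{\bf P}(V)}\mathcal T_{M_\xi}$ into irreducible thick subcategories.
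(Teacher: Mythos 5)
Your proposal is correct and follows essentially the paper's own route: the theorem is stated there as a summary of the section, and your assembly---thickness of the complexity-one subcategory from the lemma on triangles, the decomposition via Corollary \ref{oneM} together with the identification $\mathcal T_{M_\xi}=\underline{\mathcal F}(\{M_\xi(i)\}_{i\in\mathbb Z})$, orthogonality from Lemma \ref{Hom lemma}(ii), and irreducibility from the theorem that every nonzero object of $\mathcal T_{M_\xi}$ generates it---is precisely the intended argument.
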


\section{Modules with self-extensions}

\noindent  It is an open problem whether all the indecomposable modules of non-maximal complexity over the exterior algebra have graded self-extension. We cannot give a complete answer even in the complexity one case. We prove in this section that in the case $n=2$, every module of complexity one or two, has self-extensions, while in the higher dimensional case, we prove this result for linear modules having non maximal complexity. We will use an argument which seems to be well known to geometers, and we are grateful to Greg Stevenson for explaining us how it can be deduced from
B\"ohning's result  \cite[2.1.4]{B} : A sheaf without self-extensions over   ${\bf P}(V)$ is locally free.
We start with a  preliminary result. We have the following (\cite[3.8]{AAH}):

\begin{pro} Let $R=R(x_0,\cdots,x_n)$ and let $S=\Bbbk[x_0,\cdots,x_n]$ where the base field $\Bbbk$ is algebraically closed. Let $M$ be a finitely generated graded $R$-module. Then $\cx M$ equals the Krull dimension of the graded $S$-module $\mathcal E(M)=\Ext_R^*(M,\Bbbk)$. 
\end{pro}

\noindent This implies that if $\mathcal E(M)$ is torsion free, then $M$ must have maximal complexity. In particular, if the sheafification (\cite{S}) of $\mathcal E(M)$ is locally free, then the complexity of $M$ must equal $n+1$. 

\begin{thm} Let $R=R(x_0,\cdots,x_n)$ 
and let $M$ be an indecomposable linear module of complexity less or equal to $n$. Then $\Ext^1_R(M,M)\ne 0$.
\end{thm}
\begin{proof}  Assume that $\Ext_R^1(M,M)=0$. By Koszul duality, we have that $\Ext_S^1(X,X)=0$ where 
$X=\mathcal E(M)=\Ext_R^*(M,\Bbbk)$. Let $\widetilde X$ be the sheafification of $X$. It follows from \cite{HZ} that the 
sheaf $\widetilde X$ has no self extension either, and from the proof of \cite[2.1.4]{B} that $\widetilde X$ is locally free yielding a contradiction to our preceding remarks.
\end{proof}

\begin{Remark} It would be interesting to have also a proof of the above result that does not require a geometric argument. We have such a proof only in the case that the complexity of our $R$-linear module equals 1. This proof is presented in an earlier version of this paper, posted in arXiv: 1701.01149. More precisely, we prove that an indecomposable linear 
module of complexity one over the exterior algebra in $n+1$ indeterminates has self-extensions.
\end{Remark}

\noindent  Since we know the structure of all indecomposable modules of complexity one, see Theorem \ref{Eis}
and Corollary \ref{oneM}, we can
use Mukai's lemma (\cite[1.4]{Ru}) we  get a slight generalization:

\begin{prop} \label {selfex}Let $R$ be the exterior algebra in $n+1$ indeterminates
and let $X$ be an indecomposable  module of complexity one minimally generated in degrees $i_0<i_1<\ldots 
<i_{p-1}<i_p$ and assume either $i_1 = i_0+(n-1)$ or $ i_p = i_{p-1}+(n-1)$. 
Then $\Ext^1_R(X,X)\ne 0$.
\end{prop}
\begin{proof}  Let $i_1 = i_0+(n-1)$ and let $L= \langle X_{i_0}\rangle$ be the submodule of $X$ generated by $X_{i_0}$.Then $L$ is a degree shift
of a linear module of complexity one, hence $\Ext_R^1(L,L) \neq 0$ by the previous proposition.
Consider the short exact sequence $0\to L \to X \to X/L \to 0$. Then $\Hom_R(L,X/L)=0$ and
$\Ext_R^2(X/L,L)=0$, hence by \cite[1.4]{Ru} again, $\dm_{\Bbbk}\Ext_R^1 (X,X)\geq\dm_{\Bbbk} \Ext_R^1(L,L)>0$.
In the second case, $ i_p = i_{p-1}+(n-1)$ we use the dual argument.
\end{proof}

\begin{cor}  Ler $R$ be the exterior algebra in $n+1$ indeterminates and let $\mathcal C$ be an Auslander-Reiten
component in the Auslander-Reiten quiver of $\md R$ containing a shift of a linear module of complexity one.
Then all indecomposable modules in $\mathcal C$ have self-extensions.
\end{cor}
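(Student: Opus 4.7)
The plan is to apply Propositions \ref{linself} and \ref{selfex}, after pinning down the generating degrees of each indecomposable $Y\in\mathcal C$. Without loss of generality, I may assume that $\mathcal C$ itself contains a linear module $L$ of complexity one, since graded shifts send AR components to AR components and preserve self-extensions. By Corollary \ref{oneM} there is a unique $\xi\in{\bf P}(V)$ such that every indecomposable in $\mathcal C$ lies in $\mathcal F(M_\xi)$, and by the earlier Lemma $\tau L = L(n-1)$; hence the mouth of $\mathcal C$ is exactly $\{L(k(n-1))\}_{k\in\mathbb Z}$, all of which are linear. For $n=1$ this already suffices: by Proposition \ref{background}(2) every weakly Koszul module is a graded shift of a linear module, so each indecomposable in the homogeneous tube through $L$ is such a shift, and Proposition \ref{linself} applies directly.

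For $n\geq 2$, $\mathcal C$ is of type $\mathbb ZA_\infty$, and I would proceed by induction on the quasi-length $k$ of an indecomposable $Y\in\mathcal C$ (its distance from the mouth) to show that the minimal generating degrees of $Y$ form the arithmetic progression
\[
 i_0 < i_0+(n-1) < \cdots < i_0+(k-1)(n-1).
\]
The base case $k=1$ is immediate, as $Y$ is then a shift of $L$. For the inductive step I would use Corollary \ref{cx1-filtr}: setting $L_0 = \langle Y_{i_0}\rangle$ gives a degree shift of a linear module of complexity one which, being a quasi-simple object in $\mathcal C$, is a shift of $L$. The short exact sequence $0\to L_0\to Y\to Y/L_0\to 0$ then relates $Y$ to a strictly shorter indecomposable $Y/L_0$ in $\mathcal C$ to which the inductive hypothesis applies. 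Combined with the mesh relations of $\mathbb ZA_\infty$ and the identity $\tau = (n-1)$ on $\mathcal C$, this forces $i_1 = i_0+(n-1)$ and the remaining generating degrees to follow the claimed progression.

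Once the arithmetic-progression property is in hand, the corollary follows immediately. If $k=1$ then $Y$ is a shift of the linear module $L$, so $\Ext^1_R(Y,Y)\neq 0$ by Proposition \ref{linself}. If $k\geq 2$ then the generating degrees satisfy $i_p - i_{p-1} = n-1$, so the second alternative of Proposition \ref{selfex} applies and $\Ext^1_R(Y,Y)\neq 0$. The hard part will be the inductive step: one has to verify that every additional quasi-simple layer contributes a genuinely new minimal generator at the next allowed degree, ruling out any ``collapse'' in which two consecutive layers of the $\mathcal F$-filtration share a generating degree. This is where the linearity of the mouth of $\mathcal C$ and the precise action of $\tau$ as shift by $n-1$ on complexity one modules are used essentially.
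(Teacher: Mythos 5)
Your proposal is correct and follows essentially the same route as the paper: the paper's proof likewise reduces everything to the claim that every indecomposable $Y$ in $\mathcal C$ (all of which have complexity one) has its minimal generators in the degrees $i_0, i_0+(n-1),\ldots, i_0+s(n-1)$, and then invokes Propositions \ref{linself} and \ref{selfex}. The ``hard part'' you flag is not elaborated in the paper either---there it is asserted in one line as a consequence of $\tau Y\cong Y(n-1)$---so your inductive sketch via Corollary \ref{cx1-filtr} is, if anything, more detailed than the published argument.
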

\begin{proof} Since $\tau X \cong X(n-1)$ for complexity one modules, any indecomposable module $Y$ in $\mathcal C$
has a minimal set of generators in degrees $i_0, i_0+(n-1), \ldots, i_0 + s(n-1)$ for some integer
$i_0$ and some $s\geq 0$, hence the above proposition applies.
\end{proof}

 \noindent  Using \cite[1.4]{Ru} we get in the two dimensional case:
 
\begin{prop} Let $R$ be the exterior algebra in $3$ indeterminates
and let $X$ be an indecomposable  module of complexity one or two. Then $\Ext^1_R(X,X)\ne 0$.
\end{prop}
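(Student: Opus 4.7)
I would argue separately on the value of $\cx X \in \{1,2\}$.

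For $\cx X = 1$, the plan is to invoke the immediately preceding Corollary. Since $n = 2 > 1$, $X$ lies in a ${\mathbb Z}A_\infty$-type \AR component by \cite[Theorem 5.7]{MVZ1}. The quasi-simple rim object of this component has minimum $\Bbbk$-length among complexity-one modules, and by Eisenbud's Theorem \ref{Eis} together with Corollary \ref{oneM} this minimum-length rim is a graded shift of $M_\xi = R/\langle\xi\rangle$ for some $\xi \in {\bf P}(V)$, hence is linear of complexity one. The preceding Corollary therefore yields $\Ext^1_R(X, X) \neq 0$.

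For $\cx X = 2$, I would apply Mukai's Lemma following the template of Proposition \ref{selfex}. After replacing $X$ by a sufficiently large syzygy $\Omega^k X$ (self-extensions being preserved under syzygy), $X$ may be taken to be weakly Koszul, with minimal generators in degrees $i_0 < i_1 < \cdots < i_p$. Let $L = \langle X_{i_0}\rangle$: by Proposition \ref{filtration by linear}, $L$ is a shift of a linear module, and by Proposition \ref{complexity middle} at least one of $L$ and $X/L$ has complexity two. If $\cx L = 1$, then Proposition \ref{linself} gives $\Ext^1_R(L, L) \neq 0$, and Mukai's Lemma applied to $0 \to L \to X \to X/L \to 0$, with its Hom- and $\Ext^2$-vanishing hypotheses verified by the degree-bookkeeping argument used in Proposition \ref{selfex}, gives $\dim\Ext^1_R(X, X) \geq \dim\Ext^1_R(L, L) > 0$. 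The dual argument, on the quotient generated in degree $i_p$, handles the case where that top piece has complexity one.

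The main obstacle is the remaining situation, where every linear layer of the weakly Koszul filtration of $X$ has complexity two. Here the plan is first to extend Proposition \ref{linself} to indecomposable linear complexity-two modules. Every such $L$ lies in a filtration category $\mathcal{F}(M_U)$ for a cyclic linear complexity-two module $M_U = R/\langle U\rangle$ with $\dm_{\Bbbk} U = 2$. An argument parallel to Lemma \ref{lifting exact sequences}, with the role of the cyclic complexity-one $M_\xi$ played by $M_U$, shows that the comparison map $\Phi_{LL}\colon \Ext^1_R(L, L) \to \Ext^1_{R/J^2}(L/LJ^2, L/LJ^2)$ is an isomorphism for $L \in \mathcal{F}(M_U)$. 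Combined with the direct Koszul-resolution computation $\Ext^1_R(M_U, M_U) \neq 0$ (for $n = 2$ one checks $\dim\Ext^1_R(M_U, M_U) = 2$), this gives $\Ext^1_R(L, L) \neq 0$ for every linear complexity-two $L$. Mukai's Lemma then again transports this non-vanishing from $L$ to $X$. Verifying the $\mathcal{F}(M_U)$-analogue of Lemma \ref{lifting exact sequences}, together with the associated Mukai hypotheses in this extended setting, is the delicate technical step.
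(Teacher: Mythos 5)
Your reduction of the complexity-two case collapses at exactly the step you flag as ``the delicate technical step,'' and the problem is not technical but structural: an indecomposable linear module of complexity two is in general \emph{not} an object of $\mathcal F(M_U)$ for any $2$-dimensional subspace $U\subseteq V$. Eisenbud's filtration theorem (Theorem \ref{Eis}(b)) is special to complexity one, and the paper's Example 1.2 is an explicit counterexample to your claim for $n=2$: it exhibits a $4$-dimensional indecomposable \emph{linear} module of complexity two all of whose proper indecomposable submodules have odd dimension (hence complexity $3$), so it cannot contain a copy of any $M_U(j)$ (which is $2$-dimensional) and admits no filtration by such modules. Consequently the proposed extension of Lemma \ref{lifting exact sequences} to a category $\mathcal F(M_U)$, even if it could be carried out, would not cover the modules you need, and the case ``every linear layer has complexity two'' remains open in your argument. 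The paper handles precisely this case by a completely different idea: assuming $\Ext^1_R(X,X)=0$, Mukai's lemma applied to the relative extension $0\to U\to X\to X/U\to 0$ (with $U=\langle X_{i_0}\rangle$ linear, and the Hom- and $\Ext^2$-vanishing obtained from the degree gap) forces $\Ext^1_R(U,U)=0$; then under Koszul duality and sheafification the rigid linear module $U$ corresponds to a rigid sheaf on ${\bf P}^2$, which by Dr\'ezet \cite{D} must be a vector bundle, and by \cite{AAH} modules corresponding to vector bundles have maximal complexity $3$, contradicting $\cx U\le\cx X=2$. Some input of this geometric type (or a genuine substitute) is indispensable here, and your proposal contains nothing playing that role.

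The complexity-one case also has an unjustified step. You invoke the preceding Corollary via the claim that the quasi-simple module on the rim of the ${\mathbb Z}A_\infty$-component of $X$ has minimal length among complexity-one modules and is therefore a shift of $M_\xi$. Minimality of length for quasi-simples holds within the component containing $M_\xi$ (this is how the paper sees that the middle term $X_\xi$ of the AR sequence is indecomposable), but there is no reason the component of an arbitrary complexity-one module contains any $M_\xi(j)$, nor even a shift of a linear module; the rim modules are merely weakly Koszul of complexity one. The paper avoids this by arguing directly that for $n=2$ an indecomposable complexity-one module is minimally generated in consecutive degrees $i_0,i_0+1,\ldots,i_0+r$, so that Proposition \ref{selfex} applies with $i_p=i_{p-1}+(n-1)$. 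As written, both halves of your proposal have genuine gaps, the second being the essential one.
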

\begin{proof}  If $X$ has complexity one, then $X$ is minimally generated in degrees $i_0, i_0+1, i_0+2, \cdots, i_0+r$,
hence it has self extensions by Proposition \ref{selfex}.
Let $X$ be an indecomposable module such that $\cx X = 2$ and 
and assume that $\Ext^1_R(X,X)=0$. By taking syzygies, we may assume without loss of generality that $X $ is weakly Koszul (\cite{MVZ1}). Again without loss of generality we may assume that $X$ is minimally generated in degrees $0,1,\ldots,p$. Then, as in the proof of \ref{selfex} there exists a linear $R$-module $U$ and a relative extension $0\to U\to X\to V\to 0$ of weakly Koszul modules of complexity  two. Moreover, $V$ is minimally generated in degrees $1,\ldots,p$, and we have $\Hom_R(U,V)=\Ext_R^2(V,U)=0$. From Mukai's lemma, we infer that $\Ext_R^1(U,U)=0$. Under Koszul duality and sheafification, $U$ corresponds to a rigid sheaf on the projective plane, so by \cite{D}, it must be a vector bundle. But the modules corresponding to vector bundles must have maximal complexity by \cite {AAH}, hence $\cx U=\cx X=3$ and we get a contradiction.
\end{proof}


\appendix 
  
 \section{The projective line case}
  \label{app:prelim}

 The thick subcategories of $\underline\md R$ where $R=R(x_0,x_1)$ is the 
exterior algebra in two indeterminates were described by Krause in \cite{Kr} using non-crossing partitions. Using the BGG correspondence this yields a description of the thick 
subcategories of $\mathcal D^b(\coh {\bf P}^1)$. In this appendix we will use an ``Auslander-Reiten theoretical approach" to formulate and recapture these results. Note that in the dimension one case, every 
indecomposable non projective graded $R$-module that is not simple has Loewy length 
(therefore graded length) equal to two. \\

\noindent If $M$ is indecomposable with complexity 1, then $\tau M\cong M$, hence $M$ lies in a homogeneous tube $\Sigma$,
the whole tube is contained in the thick subcategory $\mathcal T_M$ and any two indecomposable modules in $\Sigma$
generate the same thick subcategory. Also for every non zero linear form $\xi$, the modules $M_{\xi}$ defined at the beginning of this paper, are uniserial of length 
two, have complexity one, and are the graded quasi-simple $R$-modules. Therefore we may assume that the module $M$ is quasi-simple in $\Sigma$.

\begin{prop} Let $M$ be an indecomposable quasi-simple graded module of complexity 1. 
Let $N$ be another quasi-simple module of complexity 1 belonging to $\mathcal T_M$. 
Then $N\cong\Omega^{-i}M\cong M(i)$ for some $i\in\mathbb Z$.

\begin{proof} Using \ref{background} we infer that the module $M$ is linear, 
so it is generated in degree 0. We know that $\mathcal T_M$ contains the tube containing $M$,
 as well as all the tubes containing each $\Omega^iM=M(-i)$. 
  Since $\mathcal T_M$ is 
connected, there are nonzero maps between the homogeneous tubes contained in $\mathcal T_M$. 
So for every tube in $\mathcal T_M$ containing a quasi-simple module $L$, there is another tube  
$\mathcal T_M$ containing a quasi-simple module $N$ such that we must have 
$\underline\Hom_R(L,N)\ne 0$ or $\underline\Hom_R(N,L)\ne 0$. Since $L$ is weakly Koszul it follows 
from \ref{background} that it must be in fact, a (degree) shift of some linear module.
Assume that $\underline\Hom_R(L,N)\ne 0$. By the preceding remarks and Lemma 1.2.,
$N$ is either generated in degree 0, or -1. If $N$ is generated in degree 0, since both $L$ and 
$N$ are uniserial of graded length 2, every nonzero homomorphism from $L$ to $N$ must be an isomorphism,
so assume that $N$ is generated in degree
  $-1$. But $N$ being also of complexity 1, means that we may write $N=\Omega^{-1}K$ for some linear 
module $K$ of complexity 1, so we have $\Ext_R^1(L, K)=\underline\Hom_R(L,\Omega^{-1}K)\ne 0$.  
Thus we have a non split extension of linear modules 
$0\rightarrow K\rightarrow X\rightarrow L\rightarrow 0$ and an induced commutative diagram:
$$ 
\xymatrix
{ &0\ar[r]&K \ar^{f}[d]
\ar[r] &X\ar[d]\ar[r]
&L\ar@{=}[d] \ar[r] &0\\ &0 \ar[r] &L \ar[r]
&Y\ar[r] &L\ar[r]& 0} 
$$
where the bottom sequence is the \AR sequence ending at $L$. As before, the homomorphism $f$ is either 
an isomorphism or zero. But $f$ cannot be zero since this would imply a splitting of the \AR
sequence ending at $L$. So $f$ is an isomorphism, and then $N\cong\Omega^{-1}L$. The case 
$\underline\Hom_R(N,L)\ne 0$ is treated in a similar fashion. 
\end{proof}
\end{prop}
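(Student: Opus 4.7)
The plan is to first identify both $M$ and $N$ as graded shifts of modules of type $M_\xi$ and $M_\eta$, and then invoke Theorem \ref{modules of cx 1} to force $\xi=\eta$.

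First I would argue that the quasi-simple graded modules of complexity one for $n=1$ are, up to a graded shift, precisely the modules $M_\xi$ with $\xi \in \mathbb{P}(V)$. By Corollary \ref{cx1} every indecomposable of complexity one is weakly Koszul, so Proposition \ref{background}(2) shows it is a graded shift of a linear module. Combined with the fact recorded at the start of this section that every indecomposable non-projective non-simple graded $R$-module has Loewy length two, together with quasi-simplicity (which sits the module at the base of its homogeneous tube, hence at dimension two), each of $M$ and $N$ must be of the form $M \cong M_\xi(k)$ and $N \cong M_\eta(j)$ for some $\xi,\eta \in \mathbb{P}(V)$ and $k,j \in \mathbb{Z}$.

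Next I would observe that each thick subcategory $\mathcal{T}_{M_\lambda}$ is closed under the syzygy functor, and Theorem \ref{Eis} provides isomorphisms $\Omega^{-r}M_\lambda \cong M_\lambda(r)$; therefore $\mathcal{T}_{M_\lambda(r)} = \mathcal{T}_{M_\lambda}$ for every $r \in \mathbb{Z}$. In particular $\mathcal{T}_M = \mathcal{T}_{M_\xi}$ while $N \in \mathcal{T}_{M_\eta}$. By Theorem \ref{modules of cx 1} the thick subcategory of all complexity-one modules decomposes as the disjoint union $\bigsqcup_{\lambda \in \mathbb{P}(V)} \mathcal{T}_{M_\lambda}$, so the nonzero module $N$ lying simultaneously in $\mathcal{T}_{M_\xi}$ and $\mathcal{T}_{M_\eta}$ forces $\xi=\eta$. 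Setting $i = j-k$ then yields $N \cong M_\xi(j) \cong M(i) \cong \Omega^{-i}M$.

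The main obstacle, relative to this clean reduction, is that the argument leans on the rather strong Theorem \ref{modules of cx 1}. A more hands-on Auslander-Reiten-theoretic alternative would proceed as follows: use that $\mathcal{T}_M$ contains the homogeneous tube of every graded shift $M(i)$, and exploit the connectedness of $\mathcal{T}_M$ to produce a nonzero stable homomorphism $L \to N$ (or $N \to L$) with $L$ a graded shift of $M$. Lemma \ref{Hom lemma} together with linearity would then force $N$ to be generated either in the same degree as $L$ (in which case a dimension count gives $L \cong N$ directly) or exactly one degree below (in which case $N \cong \Omega^{-1}K$ for some linear $K$ of complexity one). In the latter subcase, the resulting non-split extension $0 \to K \to X \to L \to 0$ admits a comparison map with the Auslander-Reiten sequence ending at $L$, and the nonvanishing of the connecting homomorphism $K \to L$ (together with the fact that both are graded shifts of $M_\xi$-type modules) forces this map to be an isomorphism, yielding $N \cong \Omega^{-1}L$ and completing the proof.
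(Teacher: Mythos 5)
Your reduction is correct, but it takes a genuinely different route from the paper. The paper's proof of this proposition is deliberately elementary and Auslander--Reiten-theoretic: it uses the connectedness of $\mathcal T_M$ to produce a nonzero stable map between quasi-simples of two tubes, then Proposition \ref{background} and the Hom/Ext computations to pin down the degree of generation, and finally a comparison with the \AR sequence ending at $L$ to conclude $N\cong\Omega^{-1}L$ (this is exactly the ``hands-on alternative'' you sketch at the end). You instead identify $M$ and $N$ as shifts of $M_\xi$ and $M_\eta$ and invoke the disjointness in Theorem \ref{modules of cx 1}, i.e.\ the decomposition $\mathcal T=\bigsqcup_{\xi}\mathcal T_{M_\xi}$, whose proof in Section 2 runs through the BGG correspondence and the support-theoretic classification (Thomason, Dell'Ambrogio--Stevenson). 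There is no circularity --- Theorem \ref{modules of cx 1} is established independently of Section 3, its disjointness follows from Corollary \ref{oneM} together with $\mathcal T_{M_\xi}=\underline{\mathcal F}(\{M_\xi(i)\})$, and your observation that $\mathcal T_{M_\lambda(r)}=\mathcal T_{M_\lambda}$ via Theorem \ref{Eis} is fine --- so your argument is shorter, at the price of resting on much heavier machinery than the paper's representation-theoretic proof, and of being special to the ``abstract'' statement rather than exhibiting the maps between tubes that the subsequent results of Section 3 reuse.

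The one step you should shore up is the identification of quasi-simple complexity-one modules with the $M_\eta(j)$: the inference ``quasi-simple, hence at the base of its homogeneous tube, hence of dimension two'' is asserted rather than proved. Being a shift of a linear module of Loewy length two does not by itself bound the dimension (the middle term $X_\xi$ of the \AR sequence $0\to M_\xi\to X_\xi\to M_\xi\to 0$ is linear, indecomposable, of complexity one and dimension four), so you need to know that the mouths of the homogeneous tubes containing complexity-one modules are precisely the two-dimensional modules $M_\eta(j)$. This is true, and the paper states it as background at the start of Section 3 (the $M_\xi$ are the graded quasi-simples of complexity one), but a complete argument would either cite that structure of the tubes explicitly or argue that a module with $\mathcal F$-filtration length at least two cannot lie at the mouth of its tube.
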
 

\noindent As an immediate consequence we obtain a description of the thick subcategories 
generated by indecomposable modules of complexity 1:

\begin{prop} Let $M$ be an indecomposable $R$-module of complexity 1, and let 
$\mathcal T_M$ be the thick subcategory generated by $M$. Then $\mathcal T_M$ is 
the additive subcategory of $\underline\md^ZR$ generated by $\bigcup_{i\in\mathbb Z}\Sigma_i$, 
where for each integer $i$, $\Sigma_i$ denotes the graded \AR component containing the module $M(i)$.
\end{prop}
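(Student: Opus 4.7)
The plan is to establish the two inclusions. The containment
\[
\add\Big(\bigcup_{i\in\mathbb Z}\Sigma_i\Big)\subseteq\mathcal T_M
\]
is immediate from the Section~1 proposition asserting that $\mathcal T_M$ contains the \AR component of each graded shift $M(i)$, together with the fact that thick subcategories are closed under direct sums and summands. For the reverse inclusion I would pick an arbitrary indecomposable $X\in\mathcal T_M$ and exhibit an integer $i$ with $X\in\Sigma_i$.

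The first step is to observe that every object of $\mathcal T_M$ has complexity at most one. Indeed, the class of such objects is closed under summands, the shift functor, and triangles (the latter by the Section~1 lemma bounding complexity across a triangle), and hence is itself a thick subcategory containing $M$. Consequently $\cx X\le 1$, and since $X$ is non-projective in the stable category we must have $\cx X=1$. By the discussion preceding the preceding Proposition, this forces $X$ to lie in some homogeneous tube $\Sigma$ with quasi-simple $Q$, say $X\cong Q[\ell]$ for some $\ell\ge 1$.

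The key step is to show that $Q$ itself belongs to $\mathcal T_M$, which I would prove by descending induction on $\ell$. In a homogeneous tube one has $\tau Q[\ell]=Q[\ell]$, so for $\ell\ge 2$ the \AR triangle at $X$ has the form
\[
Q[\ell]\longrightarrow Q[\ell-1]\oplus Q[\ell+1]\longrightarrow Q[\ell]\longrightarrow \Omega^{-1}Q[\ell].
\]
Triangle closure of $\mathcal T_M$ forces the middle term into $\mathcal T_M$, and summand closure then gives $Q[\ell-1]\in\mathcal T_M$. Iterating reduces $\ell$ to $1$ and yields $Q\in\mathcal T_M$.

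With $Q\in\mathcal T_M$ in hand, the preceding Proposition applies to the quasi-simple $Q$ of complexity one and yields $Q\cong M(i)$ for some $i\in\mathbb Z$; hence $\Sigma=\Sigma_i$ and $X\in\Sigma_i$. The point I expect to require the most care is verifying the precise shape of the \AR triangle inside a homogeneous tube of complexity-one modules -- in particular that $\tau$ restricts to the identity on each such tube in the $n=1$ case -- since this is what drives the inductive step; once granted, the remainder is essentially bookkeeping with triangle and summand closure.
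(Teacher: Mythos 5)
Your proof is correct, but it handles the reverse inclusion by a genuinely different mechanism than the paper. The paper, after noting $\add\bigl(\bigcup_{i}\Sigma_i\bigr)\subseteq\mathcal T_M$ (Proposition 1.13), shows that $\add\bigl(\bigcup_{i}\Sigma_i\bigr)$ is itself closed under triangles: given a triangle with outer terms in the union, it takes an indecomposable summand $C$ of the middle term, produces a nonzero stable map between $C$ and an indecomposable of some $\Sigma_j$, and reduces to quasi-simples as in Proposition 3.1 to conclude $C\in\Sigma_j$ or $\Sigma_{j-1}$; minimality of $\mathcal T_M$ then forces equality. You instead show directly that every indecomposable object of $\mathcal T_M$ lies in some $\Sigma_i$: the modules of complexity at most one form a thick subcategory (Lemma 1.11(2), cf.\ Theorem 2.11), so such an object has complexity one and sits in a homogeneous tube; since $\tau$ is the identity on complexity-one modules when $n=1$ (Lemma 1.11(1)), the \AR triangle $Q[\ell]\to Q[\ell-1]\oplus Q[\ell+1]\to Q[\ell]\to\Omega^{-1}Q[\ell]$ together with triangle and summand closure lets you descend to the quasi-simple $Q\in\mathcal T_M$, which Proposition 3.1 identifies with some $M(i)$. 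Your route gives an explicit description of the objects of $\mathcal T_M$ and avoids the Hom-analysis of middle terms; the paper's route proves directly that the union of the components is triangulated, which is the form of the statement it reuses afterwards. One small point to patch: Proposition 3.1 assumes $M$ quasi-simple, whereas here $M$ is only indecomposable of complexity one, so you should first invoke the reduction at the end of Section 1 (replace $M$ by the quasi-simple of its tube, which generates the same thick subcategory and determines the same components $\Sigma_i$), or equivalently run your tube-descent in the component of $M$ itself before applying Proposition 3.1.
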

\begin{proof} { We have seen that $\mathcal T_M$ contains the $\add\bigcup_{i\in\mathbb Z}\Sigma_i$, 
so we only need to prove that the additive closure of this union is a triangulated 
subcategory of $\underline\md^{\mathbb Z}R$. Let $X\rightarrow Y\rightarrow Z\rightarrow\Omega^{-1}X$ 
be a triangle where $X,Z$ are in $\add\bigcup_{i\in\mathbb Z}\Sigma_i$. Obviously $Z$ is also isomorphic 
to a direct sum of modules of complexity 1. Let $C$ be an 
indecomposable summand of $Y$. We may assume that one of $\underline\Hom_R(X,C)$ and 
$\underline\Hom_R(C,Z)$ is nonzero.  Say we have a non-zero graded map from an indecomposable 
direct summand $X'$ of $X$  contained in some tube
 $\Sigma_j$ to  $C$. As in the proof of the previous lemma, we may assume that both $X'$ and $C$ are quasi-simple and it 
follows that $C\in\Sigma_j$ or $C\in\Sigma_{j-1}$.}
\end{proof}

\begin{lemma} Let $M$ be an indecomposable graded module of complexity one, 
and let $\mathcal T$ be a thick subcategory of $\underline\md R$ 
containing $M$ and a graded simple module. Then $\mathcal T=\underline\md R$.
\end{lemma}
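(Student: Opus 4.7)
The plan is to show that $\mathcal T$ contains every graded simple module $S(i)$, after which $\mathcal T=\underline\md R$ will follow from a composition-series argument.

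First, since $M$ has complexity one, the preceding proposition implies that $\mathcal T_M\subseteq\mathcal T$ is the additive closure of the Auslander-Reiten components containing the graded shifts $M(i)$. Letting $M_\eta(k_0)$ denote the quasi-simple module at the mouth of the tube containing $M$ (with $\eta\in{\bf P}(V)$ uniquely determined by $M$ via Corollary \ref{oneM}), we have $M_\eta(k_0)\in\mathcal T$. Since $\mathcal T$ is closed under $\Omega^{\pm 1}$ and $\Omega^{-i}M_\eta(k_0)\cong M_\eta(k_0+i)$ by Theorem \ref{Eis}, every $M_\eta(k)$ with $k\in\mathbb Z$ lies in $\mathcal T$.

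Next I would exploit the short exact sequence $0\to S(-1)\to M_\eta\to S\to 0$, which holds because $M_\eta=R/\langle\eta\rangle$ is uniserial of length two with top $S$ (in degree zero) and socle $S(-1)$ (in degree one). Shifting by $k$ produces short exact sequences $0\to S(k-1)\to M_\eta(k)\to S(k)\to 0$ and hence distinguished triangles $S(k-1)\to M_\eta(k)\to S(k)\to\Omega^{-1}S(k-1)$ in $\underline\md R$ for each $k\in\mathbb Z$. Starting from the hypothesis that some $S(j)\in\mathcal T$, the triangle at $k=j$ together with $M_\eta(j)\in\mathcal T$ forces $S(j-1)\in\mathcal T$, and iterating downwards yields $S(i)\in\mathcal T$ for all $i\le j$; the triangle at $k=j+1$ together with $M_\eta(j+1)\in\mathcal T$ and $S(j)\in\mathcal T$ analogously gives $S(j+1)\in\mathcal T$, and iterating upwards yields $S(i)\in\mathcal T$ for all $i\ge j$.

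To conclude, every finitely generated graded $R$-module $X$ has a finite composition series whose factors are graded simple modules $S(j_s)$. Each associated short exact sequence in $\md R$ descends to a distinguished triangle in $\underline\md R$, so triangle closure of $\mathcal T$ gives $X\in\mathcal T$ by induction on the composition length. Hence $\mathcal T=\underline\md R$. The key step is the second one, which uses the short exact sequence connecting $M_\eta$ to the two simples in consecutive degrees to propagate the single simple $S(j)$ assumed in $\mathcal T$ to every graded simple module, once the family $\{M_\eta(k)\}_{k\in\mathbb Z}$ has been shown to lie in $\mathcal T$.
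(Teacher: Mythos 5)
Your proof is correct and follows essentially the same route as the paper: reduce to the quasi-simple length-two module $M_\eta$ (using that $\mathcal T_M$ contains the Auslander--Reiten components of all shifts of $M$), then propagate the given simple through the triangles coming from $0\to S(k-1)\to M_\eta(k)\to S(k)\to 0$ to get every $S(i)\in\mathcal T$. The only cosmetic difference is your concluding induction on graded composition length, where the paper instead invokes that every indecomposable non-projective module for $n=1$ has graded length two; both finish the argument equally well.
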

\begin{proof} Since $\mathcal T_M$ contains all the graded shifts of $M$, 
we may assume without loss of generality that $M$ is quasi-simple generated 
in degree 0. We have an embedding $S(-1)\rightarrow M$
 and a homomorphism $M\rightarrow S$, where $S$ is the unique simple module 
concentrated in degree zero. From the distinguished triangle $$S(-1)\rightarrow
M\rightarrow S\rightarrow\Omega^{-1}S(-1)$$ 
\noindent we infer that $S(-1)\in\mathcal T$ and 
in this way we get that for each $i\in\mathbb Z$, $S(i)\in\mathcal T$. Let $N$ 
be an indecomposable non projective $R$-module of graded length two.
 We have a short exact sequence $0\rightarrow S(l)^t\rightarrow N\rightarrow S(l+1)^s\rightarrow 0$ 
for some positive integers $s,t$ and for an integer $l$. It follows that $N\in\mathcal T$.
\end{proof}

\noindent We obtain the following consequence:
\begin{cor} Let $M$ be an indecomposable graded module of complexity one, and let 
$\mathcal T$ be a thick subcategory of $\underline\md R$ containing 
$M$ and an indecomposable module belonging to one of the graded transjective components.
 Then $\mathcal T=\underline\md R$.
\end{cor}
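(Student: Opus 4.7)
By the preceding lemma it suffices to produce some graded shift $S(j)$ of the simple $R$-module in $\mathcal T$: once we have this, the preceding lemma (applied to $M$ and $S(j)$) immediately gives $\mathcal T = \underline\md R$. My approach is to pass to $\mathcal D^b(\coh {\bf P}^1)$ via the BGG correspondence \cite{BGG}, a triangulated equivalence $\underline\md R \cong \mathcal D^b(\coh {\bf P}^1)$ that sends the suspension $\Omega^{-1}$ to the derived shift $[1]$. Under BGG the complexity-one module $M_\xi$ corresponds, up to a derived shift, to the skyscraper sheaf $\mathcal O_\xi$ at the point $\xi$; an indecomposable $N$ of a graded transjective component corresponds, up to a derived shift, to a line bundle $\mathcal O(a)$; and the graded simple module $S$ corresponds to the structure sheaf $\mathcal O$.

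\medskip
\noindent Let $\widetilde{\mathcal T} \subset \mathcal D^b(\coh {\bf P}^1)$ denote the image of $\mathcal T$; it is a thick subcategory, closed under $[\pm 1]$. Using closure under the derived shift, after suitable shifts we may assume that both $\mathcal O_\xi$ and $\mathcal O(a)$ themselves belong to $\widetilde{\mathcal T}$. The short exact sequence
$$0 \to \mathcal O(a-1) \to \mathcal O(a) \to \mathcal O_\xi \to 0$$
on ${\bf P}^1$ (obtained by evaluation of a section at $\xi$) is a distinguished triangle in $\mathcal D^b(\coh {\bf P}^1)$ two of whose vertices lie in $\widetilde{\mathcal T}$, so by thickness the third vertex $\mathcal O(a-1)$ lies there as well. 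Iterating downward, and using the analogous sequences $0 \to \mathcal O(i) \to \mathcal O(i+1) \to \mathcal O_\xi \to 0$ to move upward, one obtains $\mathcal O(i) \in \widetilde{\mathcal T}$ for every $i \in \mathbb Z$. In particular $\mathcal O \in \widetilde{\mathcal T}$, which means some graded shift of $S$ lies in $\mathcal T$, and the preceding lemma now yields $\mathcal T = \underline\md R$.

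\medskip
\noindent\textbf{Main obstacle.} The only real technical point is the careful bookkeeping of the BGG dictionary: verifying the precise sheaves to which the graded modules $M_\xi$, the transjective indecomposable $N$, and the simple $S$ are sent (all up to derived shifts, which are harmlessly absorbed by closure of $\mathcal T$ under $\Omega^{\pm 1}$), and in particular that indecomposables of the graded transjective components correspond to shifts of line bundles on the sheaf side. These identifications are standard, and once they are in place the argument collapses to the single short exact sequence above.
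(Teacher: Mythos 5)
Your proposal is essentially correct, but it takes a genuinely different route from the paper's. The paper never leaves $\underline\md R$: after the same reduction to the preceding lemma, it produces a graded simple module in $\mathcal T$ by Auslander--Reiten bookkeeping inside the transjective component (for $X=F_{2k}$ one has $F_{2k}=\tau^kS=\Omega^{2k}S(2k)$, so $S(2k)\in\mathcal T$; for $X=F_{2k+1}$ it uses that $M(2k)\in\mathcal T$ and a monomorphism $M(2k)\to F_1(2k)$ with simple cokernel, so the associated triangle puts a graded simple into $\mathcal T$). You instead transport everything to $\mathcal D^b(\coh {\bf P}^1)$ and use the twists of the sequence $0\to\mathcal O(a-1)\to\mathcal O(a)\to\mathcal O_\xi\to 0$. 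Once the dictionary is granted --- and within the paper it is justifiable: complexity-one indecomposables are exactly those corresponding to shifts of finite-length sheaves and they lie in tubes, so an indecomposable of a transjective component must go to a shifted line bundle, and the graded simples are among these --- your argument is shorter and actually proves more: you obtain every $\mathcal O(i)$, hence every shifted line bundle, in $\widetilde{\mathcal T}$, and since the line bundles generate $\mathcal D^b(\coh {\bf P}^1)$ as a thick subcategory (Beilinson), the final appeal to the preceding lemma is not even needed. The trade-off is that the paper's argument is self-contained and elementary, while yours leans on the (standard but unverified in your write-up) BGG identifications.

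One point you should repair: the statement allows $M$ to be an arbitrary indecomposable module of complexity one, whereas your proof silently replaces it by the quasi-simple $M_\xi$, i.e.\ by a skyscraper sheaf. A general such $M$ corresponds, up to a derived shift, to an indecomposable torsion sheaf of some length $m\ge 1$ supported at a single point, and your displayed sequence does not literally apply to it. The fix is immediate: either use $0\to\mathcal O(a-m)\to\mathcal O(a)\to T\to 0$, where $T$ is that length-$m$ torsion sheaf, and iterate in steps of $m$ (any one new shifted line bundle already suffices, since it corresponds to some $\Omega^iS(j)$), or first observe that $\mathcal T\supseteq\mathcal T_M$ contains the mouth $M_\xi$ of the tube of $M$ (by the paper's description of $\mathcal T_M$ as the additive closure of the components containing the shifts of $M$, or by taking the cone of multiplication by a uniformizer on the torsion sheaf) and then run your argument exactly as written.
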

\begin{proof} In view of the previous lemma, it suffices to show that $\mathcal T$ 
contains a graded simple module. Let $X\in\mathcal T$ be an indecomposable module of complexity two, 
and let $\Gamma$ be the transjective component containing it, 
$$\xymatrix@R=8pt@C=16pt{
 && F_1 \ar@<.5ex>[dr]\ar@<-0.5ex>[dr]  
& &
F_{-1} \ar@<.5ex>[dr]\ar@<-0.5ex>[dr]  
&&\\  \dots &
F_2\ar@<.5ex>[ur]\ar@<-0.5ex>[ur]  
&&
S \ar@<.5ex>[ur]\ar@<-0.5ex>[ur]  
&&
F_{-2} &\dots} $$ 

\noindent where $S$ is the graded simple module lying in $\Gamma$. Assume that $X=F_i$ for some $i\ge 1$. 
If $i=2k$ is even, $F_i=\tau^kS=\Omega^{2k}S(2k)$, 
so the graded simple module $S(2k)\in\mathcal T$. If $i=2k+1$, then $F_i=\tau^kF_1=\Omega^{2k}F_1(2k)$, 
implying that $F_1(2k)\in\mathcal T$. 
 But  $M(2k)$ is also in $\mathcal T$ and, as in proofs of the previous lemmas, we have a nonzero homomorphism 
 $f\colon M(2k)\rightarrow F_1(2k)$, or a non-zero homomorphism $f\colon F_1(2k)\rightarrow M(2k)$. 
In the first case, since both $F_1(2k)$ and $M(2k)$ are 
generated in the same degree and have linear resolutions, 
$\underline\Hom_R(M(2k),F_1(2k))=\Hom_R(M(2k),F_1(2k))$. But $M(2k)$ is quasi-simple, 
so $f$ is one-to-one and its cokernel is simple. So in this case too,
$\mathcal T$ contains a graded simple module. The remaining case, and the case when $X$ is 
of the form $F_{-i}$ for some $i\ge 0$ is treated the same way. 
\end{proof}


\noindent We turn our attention to the thick subcategories generated by the modules of complexity 2. 
Let $\Gamma_0$ be the transjective component 
containing the simple module $S$ generated in degree zero. 
Then for each integer $i$ we have $$F_{i+1}=\Omega F_i(1)=\Omega^{i+1}S(i+1)$$ 
\noindent where $F_0$ denotes the simple module $S$. Let $X\in\Gamma_0$. Then $\mathcal T_X$ contains 
all the syzygies and cosyzygies of $X$, and it follows that for each integer $j$, 
$\mathcal T_X$ contains a graded shift of $F_j$, so it must contain some graded simple module. 
So when describing $\mathcal T_X$ we may assume without loss of generality that $X$ is a graded 
simple module. We have the following:

\begin{lemma} Let $S$ be the graded simple module generated in degree zero. 
Then $\mathcal T_S$ is the additive closure of $\{F_i(-i)|i\in\mathbb Z\}$.
\end{lemma}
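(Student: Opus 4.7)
The plan has two parts. The easy inclusion $\add\{F_i(-i):i\in\mathbb Z\}\subseteq\mathcal T_S$ is immediate from $F_i(-i)=\Omega^iS$ together with the fact that $\mathcal T_S$ is closed under $\Omega^{\pm 1}$ and contains $S$. For the reverse inclusion, I would show that $\mathcal A:=\add\{\Omega^iS:i\in\mathbb Z\}$ is itself a thick subcategory of $\underline\md R$. Closure under $\Omega^{\pm 1}$ is tautological, and closure under direct summands follows from Krull--Schmidt, since the $\Omega^iS$ are pairwise non-isomorphic indecomposable objects of the transjective component $\Gamma_0$ containing $S$.

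The heart of the argument is closure under triangles. Because $\Omega$ is an autoequivalence of $\underline\md R$,
\[
\Ext^1_R(\Omega^aS,\Omega^bS)\cong \underline{\Hom}_R(\Omega^{a+1-b}S,S).
\]
For $c>0$ the module $\Omega^cS$ is linear and generated in degree $c$, so its degree-zero component is trivial; for $c<0$, starting with $\Omega^{-1}S=I(S)/S=R(2)/S$, which is supported in degrees $-2$ and $-1$, one shows by induction on $|c|$ (or equivalently by the graded self-duality of the exterior algebra, which exchanges $\Omega$ and $\Omega^{-1}$ up to grading shift) that $\Omega^cS$ lives entirely in degrees $\le c<0$. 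Either way, no nonzero degree-zero graded morphism $\Omega^cS\to S$ exists when $c\neq 0$. Hence $\Ext^1_R(\Omega^aS,\Omega^bS)$ vanishes unless $b=a+1$, in which case it equals $\End_R(S)=\Bbbk$; a generator is supplied by the short exact sequence $0\to\Omega^{a+1}S\to F^a\to\Omega^aS\to 0$ extracted from the minimal free resolution of $S$, whose middle term $F^a$ is free.

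Now let $X\to Y\to Z\to\Omega^{-1}X$ be a triangle with $X,Z\in\mathcal A$, and write $X=\bigoplus_a(\Omega^{a+1}S)^{m_a}$ and $Z=\bigoplus_a(\Omega^aS)^{n_a}$ (summands not of these matching types contribute zero to $\Ext^1_R(Z,X)$). The connecting class thus decomposes into a family of $m_a\times n_a$-matrices over $\Bbbk$. Using Krull--Schmidt automorphisms of $X$ and $Z$, each such matrix can be reduced to an identity block of some rank $r_a$ padded with zeros, which splits the triangle as a direct sum of $r_a$ copies of the free-middle identity triangle $\Omega^{a+1}S\to 0\to\Omega^aS\xrightarrow{\mathrm{id}}\Omega^aS$ together with trivial contributions from the unmatched summands. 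Tracking $Y$ through this decomposition exhibits it as a direct sum of various $\Omega^iS$'s, so $Y\in\mathcal A$. The main subtlety is the support computation for the negative-index cosyzygies; this is the only step where the specific structure of the two-variable exterior algebra enters, and it is handled either by the direct induction sketched above or by appealing to the graded self-duality of $R$.
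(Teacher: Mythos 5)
Your proposal is correct and follows essentially the same route as the paper: identify $F_i(-i)=\Omega^iS$, show $\underline\Hom_R(\Omega^aS,\Omega^bS)$ is $\Bbbk$ for $a=b$ and zero otherwise (so the only nonsplit extensions between consecutive cosyzygies are the free-middle ones), and deduce triangle closure of $\add\{\Omega^iS\}$. The only difference is that you spell out the two steps the paper labels ``easy''/``clear'' --- the degree/support argument for the Hom vanishing and the matrix reduction of the connecting morphism --- which is a welcome filling-in of detail rather than a new method.
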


\begin{proof} $\add\{F_i(-i)|i\in\mathbb Z\}$ is obviously closed under syzygies and 
cosyzygies so to show that it is triangulated we 
need to show that if two terms of a distinguished triangle are in $\add\{F_i(-i)|i\in\mathbb Z\}$, 
then so is the third. {Let $X$ be an indecomposable module of complexity two. Without loss of generality we may clearly 
assume that $X=F_i=\Omega^iS(i)$ and belongs to the Auslander-Reiten component $\Gamma_0$ containing the 
unique simple module concentrated in degree zero.  Then $$\Ext^1_R(F_i(-i),F_i(-i))=\Ext^1_R(S,S)=0.$$ 
\noindent It is also easy to show that for all integers $i,j$:
$$\underline\Hom_R(F_i(-i),F_j(-j))=
\begin{cases}
     0 & \text{if}\ \ i\ne j. \\
     \Bbbk & \text{otherwise}.
\end{cases}$$
This clearly implies that each  $\add\{F_i(-i)|i\in\mathbb Z\}$ is a triangulated subcategory 
and the proof of the lemma is complete.}
\end{proof}

\begin{lemma} Let $\mathcal T$ be a thick subcategory of  $\underline\md R$, containing indecomposable
 modules $F_i(j)$ and $F_r(s)$ with $i+j\neq r+s$. Then $\mathcal T =\underline\md R$.
\end{lemma}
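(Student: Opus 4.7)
My plan is first to reduce to the situation in which $\mathcal T$ contains two distinct graded simple modules, and then to produce an indecomposable complexity-one module in $\mathcal T$ and invoke the preceding lemma. Since $\Omega F_\ell = F_{\ell+1}(-1)$, iterating yields $\Omega^{-\ell} F_\ell \cong S(\ell)$ for every integer $\ell$; hence $\Omega^{-i}(F_i(j)) \cong S(i+j)$ and similarly $\Omega^{-r}(F_r(s)) \cong S(r+s)$ both lie in $\mathcal T$. Setting $a = i+j$ and $b = r+s$ we obtain two distinct graded simples in $\mathcal T$. Without loss of generality $a > b$; set $k = a - b \geq 1$.

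The central construction is the following (non-split) short exact sequence, which I claim exists for some $\xi \in {\bf P}(V)$:
\[ 0 \to S \to F_{-k} \to M_\xi^{(k)}(1) \to 0, \]
where $M_\xi^{(k)}$ denotes the indecomposable complexity-one module of length $k$ in the homogeneous tube of $M_\xi$ (so $\dim M_\xi^{(k)} = 2k$, top $S^k$ and socle $S(-1)^k$). To establish it, I compute
\[ \Ext^1_R(M_\xi^{(k)}(1), S) \cong \underline\Hom_R(M_\xi^{(k)}, S) \]
using the Eisenbud periodicity $\Omega M_\xi^{(k)}(1) \cong M_\xi^{(k)}$ (Theorem~\ref{Eis}); a direct graded-Hom calculation gives dimension $k$ (the top of $M_\xi^{(k)}$ is $S^k$ in degree zero, and no such map factors through the injective envelope $R(1)^k$ since $\Hom_R(R(1),S)=0$). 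Pick a non-trivial extension class and let $E$ be its middle term. Because the extension is non-split, $S \subseteq \rad E$, so together with the indecomposability of $M_\xi^{(k)}(1)$ we get that $E$ itself is indecomposable. Counting graded pieces yields $\dim E = 2k+1$, with top $S^k$ concentrated in degree $-1$ and socle $S^{k+1}$ concentrated in degree $0$. By the section's dichotomy (indecomposable non-projective non-simple graded $R$-modules have Loewy length two), together with the parity of $\dim E$ (which excludes the complexity-one family, whose indecomposable members all have even dimension $2m$), the module $E$ must be a complexity-two module in a graded transjective component, and the top/socle data identify $E \cong F_{-k}$.

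Shifting the sequence by $a$ yields
\[ 0 \to S(a) \to F_{-k}(a) \to M_\xi^{(k)}(a+1) \to 0. \]
The module $F_{-k}(a)$ has $d$-value $-k + a = b$, hence lies in the \AR component of $S(b)$, which is contained in $\mathcal T_{S(b)} \subseteq \mathcal T$. Since $S(a) \in \mathcal T$ also, triangulated closure forces $M_\xi^{(k)}(a+1) \in \mathcal T$, and this is an indecomposable module of complexity one. The lemma just above, applied with the complexity-one module $M_\xi^{(k)}(a+1)$ and the graded simple $S(a)$, now yields $\mathcal T = \underline\md R$. The main technical obstacle is the structural identification $E \cong F_{-k}$ in the middle paragraph; once that is in hand, the remaining triangulated reasoning is routine.
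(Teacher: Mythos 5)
Your skeleton is essentially the paper's own argument run in the dual direction: reduce via $F_i(j)\cong\Omega^iS(i+j)$ to having two distinct graded simples $S(a),S(b)$ in $\mathcal T$, connect a module of the form $\Omega^{\pm k}S(\cdot)$ to a simple by a short exact sequence whose remaining term has complexity one, and finish with the earlier lemma on thick subcategories containing a complexity-one module and a graded simple. (The paper uses $0\to M(a)\to F_k(a)\to S(a)\to 0$ with complexity-one kernel, asserted ``as in the Kronecker algebra case''; you use the dual sequence with the simple as submodule.) Your peripheral steps are correct: $\Omega^{-i}F_i(j)\cong S(i+j)$, the count $\dim_{\Bbbk}\Ext^1_R(M^{(k)}_\xi(1),S)=k$, the membership $F_{-k}(a)=\Omega^{-k}S(b)\in\mathcal T_{S(b)}\subseteq\mathcal T$ (though note $\{F_i(j):i+j=b\}$ is the $\Omega$-orbit of $S(b)$, not its Auslander--Reiten component), and the final reduction.

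The genuine gap is exactly the step you flag, the identification $E\cong F_{-k}$. Non-splitness of $0\to S\to E\to M^{(k)}_\xi(1)\to 0$ does not force $E$ to be indecomposable once $k\ge 2$. Concretely, let $q\colon M^{(k)}_\xi(1)\to M_\xi(1)$ be the tube quotient, whose kernel is $M^{(k-1)}_\xi(1)$, and let $\eta$ span the one-dimensional space $\Ext^1_R(M_\xi(1),S)\cong\underline\Hom_R(M_\xi,S)$; its middle term is $R(1)/\soc R(1)\cong F_{-1}$. Since $\Hom_R(M^{(k-1)}_\xi(1),S)=0$ for degree reasons, the long exact sequence shows $q^*\eta\neq 0$; but the middle term of $q^*\eta$ is a pullback which splits as $F_{-1}\oplus M^{(k-1)}_\xi(1)$, because $\Ext^1_R(F_{-1},M^{(k-1)}_\xi(1))\cong\underline\Hom_R(S(-1),M^{(k-1)}_\xi(1))=0$. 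So there are non-split classes with decomposable middle term, and this module has the same dimension $2k+1$, the same top $S(1)^k$ and the same socle $S^{k+1}$ as $F_{-k}$, so your ``top/socle data'' cannot rule it out either; everything downstream collapses, since it is precisely the membership $F_{-k}(a)=\Omega^{-k}S(b)\in\mathcal T$ that feeds the triangle putting $M^{(k)}_\xi(a+1)$ into $\mathcal T$. To close the gap you must exhibit a class whose middle term really is $F_{-k}$, equivalently an embedding $S\hookrightarrow F_{-k}$ whose cokernel lies in a tube --- this is the Kronecker-type sequence ($0\to P_0\to P_k\to R^{(k)}_\xi\to 0$ for the Kronecker algebra) that the paper appeals to, and it can be lifted to graded $R$-modules, for instance through the linear-presentation equivalence used in Section 4. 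Alternatively, it suffices to show that some embedding $S(a)\to\Omega^{-k}S(b)$ has cokernel of complexity one, not necessarily indecomposable: since $\mathcal T$ is closed under direct summands, any non-projective indecomposable summand of such a cokernel then serves in the final step.
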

\begin{proof} Assume that $r+s>i+j$. Since for each $i,j$ we have $F_i(j)=\Omega^iS(i+j)$,  
we infer  from $F_i(j) \in \mathcal T$ that $S(i+j) \in \mathcal T$.  
Similarly $F_{r+s-(i+j)}(i+j)=\Omega^{r+s}S(i+j)$ is in $\mathcal T$. 
But $\underline\Hom_R(F_{r+s-(i+j)}(i+j), S(i+j))\neq 0$. As in the Kronecker 
algebra case there exists a short exact sequence
$$ 0\to M(i+j)\to F_{r+s-(i+j)}(i+j) \to S(i+j) \to 0,$$
\noindent where $M$ is an indecomposable module  of complexity 1. From this 
short exact sequence we get in  $\underline\md^Z_R$ a
triangle 
$$M(i+j) \to F_{r+s-(i+j)}(i+j) \to Si+j) \to \Omega^{-1}M(i+j)$$ 
\noindent Since $F_{r+s-(i+j)}(i+j)$ and $S(i+j)$ are in $\mathcal T$, $M(i+j)\in\mathcal T$ too. Corollary 2.5 then 
implies $\mathcal T=\underline\md R$.
\end{proof}
\noindent We summarize this discussion with the following description 
of the lattice of thick subcategories of $\underline\md R$:

\begin{thm} Let $R=R(x_0,x_1)$ be the exterior algebra in two indeterminates.
\
 \begin{enumerate}
\item The proper thick subcategories of $\underline\md R$ 
are of the form $\mathcal T_{\alpha}$, where $\phi\ne {\alpha\subseteq {\bf P}^1}$,
 and $\add\{F_i(-i+j)$ for all $i,j\in\mathbb Z\}$.
 \item The proper thick subcategories of $\underline\md R$ closed under tensor products  
are only of the form $\mathcal T_{\alpha}$, where $\phi\ne {\alpha\subseteq {\bf P}^1}$. \qed
\end{enumerate}
\end{thm}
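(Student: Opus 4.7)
The plan is to dichotomize a proper thick subcategory $\mathcal T\subseteq\underline\md R$ according to whether or not it contains a module of complexity one. Since every non-projective indecomposable graded $R$-module has complexity $1$ or $2$ when $n=1$, and the complexity-$2$ indecomposables are exactly those in the transjective components, this dichotomy is exhaustive.

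First I would handle the case in which $\mathcal T$ contains an indecomposable $M$ of complexity one. By the Corollary immediately preceding this theorem, any thick subcategory containing both a complexity-one module and a module from a transjective component is all of $\underline\md R$; hence properness forces every non-projective indecomposable of $\mathcal T$ to have complexity one. Setting $\alpha=\{\xi\in\mathbf P^1\mid M_\xi\in\mathcal T\}$ and invoking Theorem~\ref{modules of cx 1}, which presents the thick subcategory of all complexity-one modules as the pairwise Hom-orthogonal decomposition $\bigsqcup_{\xi\in\mathbf P^1}\mathcal T_{M_\xi}$, I would conclude that $\mathcal T=\bigsqcup_{\xi\in\alpha}\mathcal T_{M_\xi}=\mathcal T_\alpha$.

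In the remaining case every non-projective indecomposable of $\mathcal T$ is of the form $F_i(j)$. The lemma immediately preceding the theorem shows that any two such modules $F_i(j),F_r(s)\in\mathcal T$ with $i+j\neq r+s$ force $\mathcal T=\underline\md R$; hence there is a single $c\in\mathbb Z$ such that every indecomposable of $\mathcal T$ has the form $F_i(c-i)=\Omega^iS(c)$. Closure of $\mathcal T$ under $\Omega^{\pm 1}$, together with the identity $\Omega^{\pm 1}F_i(c-i)\cong F_{i\pm 1}(c-i\mp 1)$, propagates one such module to all of them, producing $\mathcal T=\add\{F_i(-i+c)\mid i\in\mathbb Z\}$. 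The preceding Lemma on the structure of $\mathcal T_S$, applied to the graded shift $S(c)$, confirms this is indeed a thick subcategory, completing part~(1).

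For part~(2), I would rule out the second family under tensor closure. Each such subcategory contains $F_0(j)=S(j)$, and unwinding the definition of the tensor product gives $S(j)\otimes X\cong X(j)$ for every graded $R$-module $X$ (using that $S$ is concentrated in a single degree and is annihilated by every linear form). Tensor closure would therefore pull every graded module into the subcategory, contradicting properness. Conversely, each $\mathcal T_\alpha$ is tensor-closed, which follows by direct sum from the ideal-closure Proposition of Section~\ref{intro} applied to each $\mathcal T_{M_\xi}$. The only real obstacle is the initial dichotomy, which is cleanly supplied by the Corollary in Section~3; the rest is bookkeeping with Theorem~\ref{modules of cx 1} and the preceding lemma on the invariant $i+j$.
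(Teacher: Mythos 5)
Your proposal is correct and takes essentially the same route as the paper, which gives no separate argument but presents the theorem as a summary of the preceding discussion: the corollary excluding thick subcategories containing both a complexity-one module and a transjective indecomposable, the irreducibility of the categories $\mathcal T_{M_\xi}$ from Section 2, the lemma on the invariant $i+j$ combined with the description of $\mathcal T_S$, and tensoring with shifted simple modules to handle the ideal-closure statement. Your assembly of these ingredients is exactly the intended proof.
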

\noindent The following picture is useful in illustrating the irreducible thick subcategories consisting of modules of complexity 2.

\adjustbox{scale=.9,center}{\begin{tikzcd}
 &&& \; \ar[d, dotted, no head] & \; & \; & \;\\
 \ar[r, dotted] & F_2(2) \ar[r, shift left] \ar[r, shift right] & F_1(2) \ar[r, shift left] \ar[r, shift right] & S(2)  \ar[r, shift left] \ar[r, shift right] & F_{-1}(2) \ar[ru, dashed, no head, red] \ar[r, shift left] \ar[r, shift right] & F_{-2}(2) \ar[ru, dashed, no head, red] \ar[r, dotted] & \; \\
 \ar[r, dotted] & F_2(1) \ar[r, shift left] \ar[r, shift right] & F_1(1) \ar[r, shift left] \ar[r, shift right] & S(1) \ar[ru, dashed, no head, red] \ar[r, shift left] \ar[r, shift right] & F_{-1}(1) \ar[ru, dashed, no head, red] \ar[r, shift left] \ar[r, shift right] & F_{-2}(1) \ar[ru, dashed, no head, red] \ar[r, dotted] & \; \\
 \ar[r, dotted] & F_2 \ar[r, shift left] \ar[r, shift right] & F_1 \ar[ru, dashed, no head, red] \ar[r, shift left] \ar[r, shift right] & S \ar[ru, dashed, no head, red] \ar[r, shift left] \ar[r, shift right] & F_{-1} \ar[ru, dashed, no head, red] \ar[r, shift left] \ar[r, shift right] & F_{-2} \ar[ru, dashed, no head, red] \ar[r, dotted] & \; \\
 \ar[r, dotted] & F_2(-1) \ar[ru, dashed, no head, red] \ar[r, shift left] \ar[r, shift right] & F_1(-1) \ar[ru, dashed, no head, red] \ar[r, shift left] \ar[r, shift right] & S(-1) \ar[ru, dashed, no head, red] \ar[d, dotted, no head] \ar[r, shift left] \ar[r, shift right] & F_{-1}(-1) \ar[ru, dashed, no head, red] \ar[r, shift left] \ar[r, shift right] & F_{-2}(-1) \ar[r, dotted] & \; \\
\; \ar[ru, dashed, no head, red] & \; \ar[ru, dashed, no head, red] & \; \ar[ru, dashed, no head, red] & \; \ar[ru, dashed, no head, red] & \; && \;
\end{tikzcd}}

 
\end{document}